\DeclareMathAlphabet{\pazocal}{OMS}{zplm}{m}{n}
\let\mathcal\pazocal
\numberwithin{equation}{section}
\renewcommand{\@biblabel}[1]{#1\hfill \hspace{-0.2cm}}
\newtheorem{theorem}{Theorem}{\bfseries}{\itshape}
{\bfseries}{\itshape}
\newtheorem{lemma}[theorem]{Lemma}{\bfseries}{\itshape}
\newtheorem{proposition}[theorem]{Proposition}{\bfseries}{\itshape}
{\bfseries}{\itshape}
\newtheorem{remark}[theorem]{Remark}{\bfseries}{\upshape}
\newtheorem{assumption}[theorem]{Assumption}{\bfseries}{\itshape}
\newcommand{\eps}[1]{{#1}_{\varepsilon}}
\numberwithin{table}{section}
\numberwithin{figure}{section}
\def \R{\mathbb{R}}               
\def \N{\mathbb{N}}               
\def \G{\mathbb{G}}             
\def \1{{\bf 1}}                
\def \0{{\bf 0}}
\def\eps{\varepsilon}
\def \State{X}
\def\eps{\varepsilon}
\def \admiss{\mathcal{A}}
\def\eps{\varepsilon}
\def \Noise{\mathcal{B}}
\def \noise{b}
\renewcommand{\paragraph}[1]{{\smallskip\noindent\textbf{#1.}}}
\def \texteuro{EUR}
\def \ufix{\afix}
\def \texteuro{EUR}
\def \one{\mathds{1}}
\def \dimred{\ell}
\def \phx{\text{PHX} }
\def \phxk{\text{PHX}}
\def \phxs{\text{PHX}s }
\def \phxsk{\text{PHX}s}
\def \medium{M}
\def \fluid{F}
\def \outlet{O}
\def \inlet{I}
\def \bottom{B}
\def \interface{J}
\def \Qav{{\overline{Q}}{}}
\def \Qm{\Qav^\medium}
\def \Qf{\Qav^\fluid}
\def \Qout{\Qav^{\outlet}}
\def \Qin{Q^{\inlet}}
\def \QinC{\Qin_C}
\def \QavAppr{{Q}{}}
\def \QmAppr{\QavAppr^\medium}
\def \QfAppr{\QavAppr^\fluid}
\def \QoutAppr{\QavAppr^{\outlet}}
\def \Qg{Q^{G}}
\def \Qmm{Q^\medium}
\def \Qff{Q^\fluid}
\def \Dm{\Domainspace^\medium}
\def \Df{\Domainspace^\fluid}
\def \Din{\Domainspace^{I}}
\def \Dout{\Domainspace^{\outlet}}
\def \Dbottom{\Domainspace^\bottom}
\def \Dtop{\Domainspace^{T}}
\def \Dleft{\Domainspace^{L}}	
\def \Dright{\Domainspace^{R}}
\def \DInterface{\Domainspace^{\interface}}
\def \Cav{\omatrix}
\def \OutputM{\Cav^{\medium}}
\def \OutputF{\Cav^{\fluid}}
\def \OutputOut{\Cav^{\outlet}}
\def \Pamb{P_{\text{amb}}}
\newcommand{\Pambn}[1]{P_{\text{amb},#1}}
\def \Pin{P_{\,\text{in}}}
\def \Pout{P_{\text{out}}}
\def \HPspread{\Delta_{\,\text{HP}}}
\def \rhom{\rho^\medium}
\def \rhof{\rho^\fluid}
\def \kappam{\kappa^\medium}
\def \kappaf{\kappa^\fluid}
\def \cp{c_p}
\def \cpm{\cp^\medium}
\def \cpf{\cp^\fluid}
\def \cpw{\cp^W}
\def \am{d^\medium}
\def \af{d^\fluid}
\newcommand{\vconst}{\overline{v}_0}
\newcommand{\heattransfer}{\lambda^{\!G}}
\newcommand{\Celsius}{{\text{\textdegree C}}}
\newcommand{\mat}[1]{{#1}}
\newcommand{\normalvec}{\mathfrak{n}}   
\newcommand{\dom}{\dagger}
\def \texteuro{EUR}
\def \State{X}
\def \Statespace{\mathcal{\State}}
\def \Domainspace{\mathcal{D}}  
\def \omatrix{C}
\def \charg{+1}
\def \discharg{-1}
\def \fuel{+2}
\def \Charg{+1}
\def \Discharg{-1}
\def \Wait{0}
\def \Fuel{+2}
\def \Spill{-2}
\def \charge{+1}
\def \discharge{-1}
\def \waite{0}
\def \fuelf{+2}
\def \Resi{\widetilde{R}}
\def \ResD{R}
\def \resd{r}
\def \resi{\widetilde{r}}
\def \nO{n_{O}}
\def \Fu{\widetilde{F}}
\def \FuD{F}
\def \fud{f}
\def \RState{Y}
\def \Rstate{y}
\def \rstate{y^1,\ldots,y^\ell}
\def \Ptrans{\overline{P}}   
\def \runCDis{\Psi}
\def \runCCont{\psi}
\def \termC{\phi_N}
\def \termD{\Phi_N}
\renewcommand{\top}{\prime}
\def \actionspace{\mathcal{K}}
\def \feasible{\mathcal{K}}
\def \amarkov{\widetilde \alpha}
\def \amarkovdiscrete{\widehat \alpha}
\def \admiss{\mathcal{A}}
\def \uprocess{u}
\def \aprocess{\alpha}
\def \ud{\alpha}
\def \afix{a}
\def \bbox{\mathcal{C}}
\def \price{\xi}
\def \drift{\eta}
\def \truncop{\mathcal{R}}
\newcommand{\mymarginpar}[1]{ \marginpar{{\tiny #1}}}
\renewcommand{\mymarginpar}[1]{}
\begin{document}
	
	\title{Cost-optimal Management of a Residential Heating System With a Geothermal Energy Storage Under Uncertainty}
	
	\author{%
		Paul Honore Takam \affil{1}
		Ralf Wunderlich \affil{1}
	}
	
	\shortauthors{the Author(s)}
	
	\address{%
		\addr{\affilnum{1}}{Brandenburg University of Technology Cottbus-Senftenberg, Institute of Mathematics, P.O. Box 101344, 03013 Cottbus, Germany; }}

\corraddr{ralf.wunderlich@b-tu.de
	\\
}

\begin{abstract}
	In this paper, we consider a residential heating system with renewable and non-renewable heat generation and different consumption units and investigate a stochastic optimal control problem for its cost-optimal management. As a special feature, the heating system is equipped with a geothermal storage that enables the intertemporal transfer of thermal energy by storing surplus heat for later use. In addition to the numerous technical challenges, economic issues such as cost-optimal control also play a central role in the design and operation of such systems. The latter leads to challenging mathematical optimization problems, as the response of the storage to charging and discharging decisions depends on the spatial temperature distribution in the storage. We take into account uncertainties regarding randomly fluctuating heat generation from renewable energies and the environmental conditions that determine heat demand and supply. The dynamics of the multidimensional controlled state processes is governed by a partial, a random ordinary and two stochastic differential equations. We first apply a spatial discretization to the partial differential equation and use model reduction techniques to reduce the dimension of the associated system of ordinary differential equations. Finally, a time-discretization leads to a Markov decision process for which we apply a state discretization to determine approximations of the cost-optimal control and the associated value function.
\end{abstract}

\keywords{
	{Stochastic optimal control, Markov decision process, Geothermal energy storage,  Model order reduction,  Residential heating system,  Numerical simulation}
	\\[1ex]
	\textbf{Mathematics Subject Classification:} 	
	93E20   	
	~~ 90-08   
	~~ 90C40   
	~~ 91G60   
}

\maketitle

\section{Introduction}

Climate change and energy dependency require urgent measures for the improvement of energy efficiency in all areas. District heating and cooling systems play an important role for  increasing energy efficiency in buildings and for including renewable energy sources. In addition to numerous technical issues, economic questions such as the cost-optimial control and management of such heating systems also play a central role. The latter  leads to challenging mathematical optimization problems which require advanced and sophisticated solution techniques. One of these problems  arising in the optimal management of a residential heating system with access to an additional geothermal energy storage (GES),  as depicted in Fig.~\ref{S-model}, is addressed in this paper. 

Thermal storage facilities and in particular geothermal energy storage help to mitigate and to manage temporal fluctuations of heat supply and demand for heating and cooling systems of single buildings as well as for district heating systems. They make it possible to store heat in the form of thermal energy and to use it again hours, days, weeks or months later. This is attractive  for space heating, domestic or process hot water production, or generating electricity, since it improves the efficiency and saves costs.   Geothermal storage are becoming increasingly important and are very attractive for heating systems in residential buildings, as they are very economical to build and maintain. Furthermore, such storage can be integrated both in new buildings and in renovations. 
GES can also be used  to cushion peaks in the electricity grid by converting electrical  energy into thermal  energy (power to heat). Pooling several GESs within the framework of a virtual power plant gives the necessary capacity which allows to participate in the balancing energy market. 

\begin{figure}[h!]
	\centering 
	\input{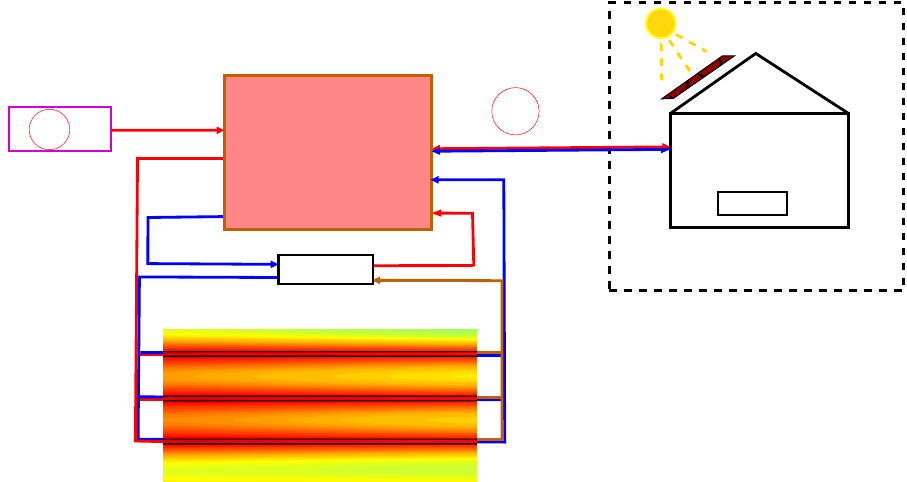tex_t}
	\caption{Simplified model of a residential heating system. Sect. \ref{sec:Heating-S} introduces the notation and gives explanation.}
	\label{S-model}
\end{figure}

In the GES investigated in this work, a certain volume under or adjacent to a building is filled with soil and insulated from the surrounding ground.  The thermal energy is stored by increasing the temperature of the soil inside the storage tank. It is charged and discharged through pipe heat exchangers (\phx) filled with a liquid such as water.
The fluid carrying the thermal energy is moved using pumps.  The \phxs are connected to an internal energy storage (IES) which is \st{ as} a water tank. Contrary to the GES, the IES  has a smaller capacity and is designed to buffer imbalances of heat supply and demand in the heating system on short time scales.   In addition, the GES stores heat at a lower temperature level, so heat pumps must be used for heat transfer from the GES to the IES.

A special feature of the GES considered in this paper is that,  it is not insulated at the bottom, allowing thermal energy to flow into deeper layers. 
This is a natural extension of the storage capacity, as this heat can be recovered to some extent if the GES is sufficiently discharged (cooled) and a heat flow is induced back into the storage. Of course, there are inevitable diffusion losses to the environment, but due to the open architecture, the GES can take advantage of higher temperatures in deeper layers of the ground and acts as a production unit similar to  a downhole heat exchanger. 

\smallskip	
In this work, we consider a mathematical model of a residential heating system consisting of
\begin{itemize}
	\item a local renewable  heat production unit such as a solar thermal collector,
	\item a non-renewable heat  production unit such as a fuel-fired boiler,
	\item several heat  consumption units in the building, 
	\item an internal energy storage, that   serves as a short-term buffer storage and is typically realized as a water tank,
	\item and, as a special feature, a geothermal storage with large capacity, which can store energy for longer periods.
\end{itemize}
In this model, we do not describe all the technical details of heat transfer to and from the consumption units of the building and the contribution of the solar collector.  Instead, we only consider the aggregated residual demand describing the imbalance  of the intermittent demand for thermal energy in the building and the supply of thermal energy from local production of solar collector.   This residual demand generally does not only  fluctuate over time, but its future values cannot be predicted with certainty, as they depend on the weather-dependent heat production of the solar collector and the demand behavior of consumers. Therefore, we model it using a stochastic process. Further, depending on the size of the heating system and the selected tariff for the fuel purchase, this also applies to the future fuel price, which may depend on the situation on the energy market. Therefore, like  residual demand, we also model the fuel price as a stochastic process. 

From an economic point of view, the aim is to operate the heating system in such a way that the aggregated costs of purchasing fuel and the electricity costs for operating the system, in particular for the transfer of heat between the IES and GES using heat pumps, are minimized over a certain period of time. These minimum costs must be known in order to make decisions about investing in certain heating systems or  the size of  an additional external geothermal storage. The optimal operation of the heating system, including the interaction of the two storage systems IES and GES, is a decision-making problem under uncertainty, since the control decisions must be made in the face of uncertainty about future energy prices and future residual demand. We will formulate it mathematically as a stochastic optimal control problem. 

In contrast to the typical situation in continuous-time stochastic control theory, where the dynamics of the controlled state is determined by a system of stochastic and ordinary differential equations (SDEs and ODEs), one of the state components is described by a heat equation with convection, that is a parabolic partial differential equation (PDE). This is an non-standard feature of the control problem that requires special attention. 
The reason for the inclusion of the heat equation is that the GES response to charging and discharging operations depends on the spatial temperature distribution in the storage medium, in particular in the vicinity of the \phxk. The dynamics of this distribution is described by the heat equation. 
Please note that when charging the GES, the liquid in the \phx reaches the GES inlet at a high temperature from the IES. On its way through the GES, the heat of the  fluid is transferred to the colder storage medium and  it returns to the IES at a lower temperature.
A heat pump is used for transferring heat from the GES to the IES. It consists of two circuits. In the first circuit, heat is extracted from the GES by sending the  fluid through the \phx at a low temperature so that it can absorb heat from the surrounding medium and return to the heat pump at a higher temperature. 
There, heat is extracted from the  fluid in the first circuit  and transferred to the  fluid  in the second circuit. In addition, the temperature is increased by using additional electrical energy so that it can be sent to the IES at a level suitable for the heating system.

Due to this mode of operation of the charging and discharging processes, heat transfer from the IES to the GES is delayed when the \phx environment is saturated and at a high temperature. In this case, the fluid in the \phx exits the GES at an almost identical temperature to its initial entry, with only a negligible amount of heat transferred to the storage medium. In order to save costs for operating the pumps, it is advantageous in such a case to stop the charging process first and instead wait until the heat in the immediate vicinity of the \phx has spread to colder regions within the GES. Vice versa, the heat transfer from the GES to the IES becomes inefficient if there is a non-homogeneous temperature distribution with low temperatures in the \phx environment. Then, the fluid in the \phx can only absorb a small amount of heat from the storage medium.


\paragraph{Literature review on  geothermal energy storage} 
We refer to Dincer and Rosen \cite{dincer2021thermal} and Regnier et al.~\cite{Regnier_et_al_2022} for an overview on thermal energy storage.
The work of Guelpa and  Verda \cite{guelpa2019thermal} and  Kitapbayev et al.~\cite{KITAPBAYEV2015823} showed that 
thermal energy storage can significantly increase both the flexibility and the performance of district energy systems and enhancing the integration of intermittent renewable energies into thermal networks. The article  Major et al. \cite{major2018numerical} considered heat storage capabilities of deep sedimentary reservoirs. Here, the governing heat and flow equations are solved using finite element methods.  Further contributions on the numerical simulation of such storage are provided in \cite{bazri2022thermal,soltani2019comprehensive}.

The GES examined in this article is a relatively new and specialized  technology that has been developed and used only in the last 15 years. To our knowledge, there are only a few references such as \cite{bahr2022efficient,bahr2017fast,TakamWunderlichPamen2023,TakamWunderlich_redu2024,takam2025energies,takam_PhD_2023} which deal with the mathematical modeling and numerical simulation of such storage facilities.
However, the heat transfer and exchange processes between the heat exchanger and the surrounding soil also  play a crucial role  not only in this work, but also for  so-called ground source heat	pumps. 	
They extract heat from the ground and feed it into heating systems. However, a storage function is of little or no importance here.	 There is a large amount of specialist literature on modeling and simulation for these systems, which are divided into horizontal and vertical geothermal heat exchangers. An overview can be found in Zayed et al.~\cite{zayed2023recent}. \

\paragraph{Literature review on optimal  energy storage management}
Energy storage  can be used to create profit by trading in the energy market and taking advantage of the fluctuating energy price by applying an active storage management, see   B\"auerle and Riess \cite{bauerle2016gas},  Chen and Forsyth \cite{chen2008semi,chen2010implications}, Ware  \cite{ware2013accurate},   Shardin and Wunderlich \cite{shardin2017partially}. The basic principle is to  buy and store energy  when prices are low, and to release and sell energy when market prices are high, and to keep the intermediate storage and operating costs, and unavoidable dissipative losses under control.

\paragraph{Literature review on  optimal management of heating systems} 
The residential heating system studied in this work can be considered as a thermal microgrid. This is an autonomous energy system with local thermal energy generation and storage units used to meet the heating demand. There is an extensive literature on the topic of optimal management of electrical microgrids. Some of these articles also include thermal units in the microgrid and investigate  combined heat and electricity systems.	
In Testi et al. \cite{testi2020stochastic}, an optimal integration of electrically driven heat pumps within a hybrid distributed energy system is investigated. There, the authors proposed a  multi-objective stochastic optimization methodology to evaluate the integrated optimal sizing and operation of the energy systems under uncertainties in climate, space occupancy, energy loads, and fuel costs. In Kuang et al. \cite{kuang2019stochastic}, a stochastic dynamic solution for off-design operation optimization of combined heating and power systems with energy storage is considered.  In Gu et al. \cite{gu2014modeling}, a review on optimal energy management of combined cooling, heating and power microgrid is considered. Further contributions on combined heating, cooling, and power system are given in  \cite{ehsan2019scenario,zhong2021distributed} and references therein.

\paragraph{Literature review on stochastic optimal control}
As already mentioned, the cost-optimal management of a heating system for residential buildings under uncertainty is treated mathematically as a stochastic optimal control problem. A large body of literature on this theory
investigates  \textit{dynamic programming}  solution techniques. In the continuous-time setting, where diffusion or jump-diffusion processes form the controlled state process, this leads to the Hamilton-Jacobi-Bellman equation as a necessary optimality condition, see Fleming and Soner \cite{fleming2006control}, Pham \cite{pham2009c}, and Oksendal and Sulem \cite{oksendal2019stochastic}. These nonlinear PDEs can usually only be solved with \textit{numerical methods}, as in Shardin and Wunderlich \cite{shardin2017partially}, Chen and Forsyth \cite{chen2010implications}.	

For discrete-time models, the theory of Markov decision processes (MDPs) offers a solutions based on backward recursion. We refer to Bäuerle and Rieder \cite{bauerle2011markov}, and Puterman \cite{puterman2014markov}, Hern{\'{a}}ndez-Lerma and Lasserre \cite{HernandezLerma1996} and Powell \cite{powell2007approximate}. Such MDPs also result from  time discretisation of continuous-time control problems. 

\paragraph{Our contribution}  This article presents a comprehensive mathematical approach for modelling the operation of a heating system equipped with a GES in residential buildings, which makes it possible to formulate and solve optimization problems that arise in the cost-optimal management of such systems.	
It explicitly takes into account the stochasticity of the imbalance of heat supply and demand due to intermittent heat production of the solar collector and the  fluctuating  demand behavior of consumers, as well as fluctuating market prices for fuel. These variables are modeled by suitable stochastic processes. Further, the model  captures the dynamics of the spatial temperature distribution in the GES which is described by a  heat equation with convection term. This enables a precise description of the interaction between the two storage units, IES and GES, and the response of the GES to charging and discharging processes, which is necessary to control the heating system.  

The cost-optimal management of the heating system based on the continuous-time setting leads to a non-standard stochastic optimal control problem in which the dynamics of the continuous-time state process is described by a system of two SDEs, one ODE and one PDE. Therefore, we first approximate the dynamics of the spatial GES temperature distribution determined by the PDE by a low-dimensional system of ODEs combining semi-discretization of  the PDE and  model order reduction techniques. In a second step, we perform a time discretization that leads to state dynamics described by a system of random recursions.  While the controls between two discrete points are assumed to be constant, the dynamics of the state variables in each of the periods are still analyzed in continuous time to avoid unnecessary time discretization errors.

This approach enables the cost-optimal management problem to be treated as an MDP.
Since the solution of MDP with dynamic programming methods is confronted with the curse of dimensionality due to the high dimension of the state space, we approximate the MDP by another MDP for a discretized state space.  This finally enables an efficient computation of the approximation of the value function and the optimal decision rules for which we provide numerical results.

\paragraph{Paper organization}   Sect.~\ref{sec:Heating-S} provides a description  of the residential heating system. In Sect.~\ref{Control-System}, the state and control variables are introduced. Sect.~\ref{sec:StateDynamics} is devoted to the continuous-time dynamics of the controlled state process which is goverened by a system of ODEs, SDEs and a PDE.  In Sect.~\ref{State-approx}, we consider the approximation of the dynamics of the spatial temperature distribution in the GES by a low-dimensional system of ODEs  In Sect.~\ref{sec:Discrite-OP}, we formulate the stochastic optimal control problem as an MDP. An approximate method to solve the MDP which is based on a state discretization is studied in Sect.~\ref{sec:StateDiscretization}.  In Sect.~\ref{OPt-Num-rsult}, we present  numerical results which show  properties of the value function and the optimal decision rules. The paper concludes in Sect.~\ref{sec:summary} with a short summary and outlook. An appendix provides a list of notations and collects proofs and technical results that have been removed from the main text.

\section{Residential Heating System}
\label{sec:Heating-S}
A residential heating system is designed to provide thermal energy for heating and hot water supply of a building. Here, the notion ``building''  is used for single family homes, office buildings, small companies  or even small districts with a couple of buildings sharing a common heat and water supply. 

\paragraph{ Residual demand} The building is equipped with some local production units for thermal energy such as solar collectors or other units using renewable energies. The supply of theses units usually does not meet exactly the demand of thermal energy due to the immanent temporal fluctuations and seasonality effects in both supply and demand.  We call that imbalance the residual demand, and model it by a stochastic process $\Resi=(\Resi(t))_{t\in[0,T]}$,  which we decompose as $\Resi(t)=\mu_R(t) + \ResD(t)$. Here, $\mu_R$ denotes the deterministic seasonality, a regular, repeating pattern  known from historical data, while the stochastic process $\ResD$ is the deseaonalized  component describing the uncertain future deviations from the seasonality pattern. More  details will be given below in Subsect.~\ref{ResidualD1}.

\paragraph{Internal energy storage}
If the demand exceeds the supply from local production then the excess demand $\Resi(t)>0$ is satisfied by an internal buffer storage. It also stores overproduction of thermal energy  which is not needed to satisfy the demand. In that case $\Resi(t)$ is negative. This internal storage is designed to balance supply and demand on shorter time scales of some hours or only  few days.  However, the capacity is not sufficient to serve as buffer for seasonal fluctuations on time scales of weeks and months. 

As often observed in reality, we assume the internal buffer storage to be a stratified hot water storage tank. The warmest storage layer is at the top and below there are colder layers through natural layering. For simplicity, we assume that the storage can keep a constant  temperature  $\overline p$ on the top and also a constant   temperature  $\underline p<\overline p$  at the bottom. We do not model the vertical temperature profile in the storage  but consider only the storage's average temperature which is denoted by $P(t)$ for time $t\in[0,T]$. 

\paragraph{Fuel-fired boiler}
Due to its limited capacity, the internal storage cannot provide the necessary heat supply for a permanent or very strong unsatisfied demand of the building. Therefore, the system is equipped with another production unit, which is able to generate enough heat also on short time scales and to prevent the internal storage to become completely empty. This unit may fire fossil fuels (gas, oil, coal), convert  electricity to heat using an immersion heater or obtain additional heat from a  district heating system as in \cite{some2025prosumers}.  In all cases, this heat production comes with additional costs arising from the consumption of fuel, or electricity, or other respective heat sources.  To be more specific, we call this additional production unit a ``fuel-fired boiler‘’ and the price of the respective heat source a ``fuel price‘’, keeping alternative energy sources in mind. We denote this  fuel price  at time $t$ by $\Fu(t)$. Uncertainties about the future prices will be captured by modeling $\Fu$ as a stochastic process,  which we decompose, as in the case of the residual demand, into $\Fu=\mu_F+\FuD$ with the deterministic seasonality pattern $\mu_F$ and deseaonalized stochastic component $\FuD$. For more details we refer to Subsect.~\ref{ResidualD1}.

\paragraph{Geothermal energy storage}
In periods of permanent or strong overproduction, the internal storage may reach its capacity limits and can no longer accommodate more leftover heat from the local production. In order to enable  a later usage of that leftover heat, the  heating system  is equipped with an additional external thermal storage, which in this work is a geothermal storage.	Compared to the internal storage, its capacity is much larger, but it is also characterized by a lower temperature level. Therefore, heat pumps are required for transferring heat from the geothermal to the internal storage.  Further, the transfer of thermal energy to and from the external storage depends on the often slow operation of heat exchangers.  
The geothermal storage is characterized by a nonhomogeneous spatial and temporal temperature distribution. We will work with a spatially two-dimensional model and denote by  $Q(t,x,y)$ the GES temperature at time $t$ and the point  $(x,y)$. More details are provided in  Subsec.~\ref{Geothermal_S}. 

If the internal storage is already (almost) fully charged and there is still overproduction of thermal energy in the building, then heat can be transferred from the internal to the external storage.  This is achieved by sending a fluid of high temperature from  the internal storage tank through the heat exchanger pipes of the geothermal storage tank.  The fluid arrives at the (possibly multiple) inlets of the pipe heat exchangers (PHXs) with the inlet  temperature denoted by $\Qin(t)$. After passing through the geothermal storage, the fluid will leave the \phxs with a lower temperature. The average temperature at the (possibly multiple) outlets is denoted by $\Qout(t)$, see below Eq.~\eqref{Average-Outlet} for details.  This is also the temperature at which the fluid returns to the internal storage. Since the efficiency of charging the geothermal storage is improved by increasing the inlet temperature $\Qin(t)$, we assume in this work that during charging, this temperature is equal to the maximum available temperature provided by the system,  which we denote by the constant $\Qin_C$. 

On the other hand, if the internal storage  is (almost) empty and there is still unsatisfied demand in the building, then instead of producing heat from firing fuel,  thermal energy can be also be transferred from the geothermal storage to the internal storage. For that process, the system uses a \textit{heat pump }for raising the temperature of the fluid arriving from the outlet of the geothermal storage to a higher level $\Pin>\underline p$. 	
Here $\Pin$ is a pre-specified temperature at which the fluid coming from the heat pump arrives at the internal storage. For simplicity, we assume that  $\Pin$ is constant.  

The heat pump connects two circuits in which moving fluids carry heat. A first circuit
is connected to the geothermal storage. The fluid arrives from the storage's outlet at the inlet of the heat pump with temperature $\Qout$. 
The heat pump withdraws heat from the fluid so that it leaves the pump at time $t$ with  the temperature $\Qout(t)-\Delta T_{HP}$, and returns to the inlet of the geothermal storage.
The quantity $\Delta T_{HP}>0$ is called heat pump spread and assumed to be a given constant.  The thermal energy extracted from the fluid of the first circuit is transferred to the fluid in the second circuit. The latter connects the heat pump with the internal storage. At the pump's inlet arrives cold water of temperature $\Pout$, which is raised  to the temperature $\Pin>\Pout$,   that is  suitable for the heating system, using the extracted heat in the first circuit and additional electrical energy. At this temperature, the fluid returns to the internal storage. 

\section{Control System}
\label{Control-System}
In this section we setup the model for  continuous-time $t\in [0,T]$, where $T>0$ is a finite time horizon.

\paragraph{State}
The {state} of the control system at $t\in [0,T]$ is given by the following   four quantities. 
First, $P$  the average temperature in the IES and $ Q$, the spatio-temporal temperature distribution in the geothermal  storage are the controlled or endogeneous state components. Further,  the  deseasonalized residual demand $R$ and 
the  deseasonalized  fuel price $F$ are two uncontrolled or exogeneous state components which will be modeled  as stochastic processes.	
We define the state process by $X=(R,F,P,Q)^\top$ where $x^\top$ denotes the transpose of the vector $x$. Further details and the description of the state dynamics are given below in Sect.~\ref{sec:StateDynamics}.  

\paragraph{Control} The operator or decision maker for the considered residential heating system has several control actions at his disposal which relate to the charging and discharging operations of the two storage facilities.
The GES is charged and discharged via the \phxs connected to the IES and filled with some fluid. We assume that during such  charging and discharging operations the fluid in the \phx is always moving with the constant velocity  $\vconst>0$. The GES is charged by transferring heat from the IES, and vice versa it is discharged  by transferring heat to the IES. 
The IES can also be charged by firing fuel. 
We assume that simultaneous discharging or charging the GES and firing fuel is not allowed, since it will not generate minimum cost and will be therefore not optimal.
If both the IES and GES are full, and there is an overproduction of heat by the solar collector, that is the residual demand is negative, then the operator no longer can store this energy but  has to discard it. We will  call this control action  \emph{over-spilling} and assume that it is associated with zero costs and that during over-spilling  the temperature of the IES will remain constant at the maximum temperature $\overline{p}$.

We formalize this by introducing the \textit{set of  feasible actions} 
\begin{align}
	\actionspace=\{\Spill, \Discharg,\Wait,\Charg,\Fuel\}
	\label{feasible A}
\end{align}
from which the decision maker can select at any time $t\in[0,T]$ an control action and form the  
\textit{continuous-time control process}    $u:[0,T]\to \actionspace$.
The interpretation of the actions in $\actionspace$ is as follows. The action
$\Charg$ indicates charging the IES   by transferring heat from the  GES,  whereas  $\Discharg$ denotes discharging the IES  by transferring heat to the  GES. In both cases the fuel-fired boiler is off.
The action $\Fuel$  indicates that heat is generated by firing fuel to charge the IES, with the \phx pumps switched off. The action $\Wait$ means wait or suspend, whereby both the \phx pumps and the fuel-fired boiler are switched off. Finally, $\Spill$ denotes over-spilling.

\section{State Dynamics}
\label{sec:StateDynamics}
In this section, we describe the dynamics of each individual component of the state process $\State$. We start with the residual demand $R$ and the fuel price $F$ which we model as stochastic processes satisfying SDEs. Then we derive a PDE for the spatio-temporal temperature distribution in the geothermal storage given by  $Q$.  Finally, we consider the average temperature in the internal storage $P$ which is governed by an ODE.  

\subsection{Residual Demand and Fuel Price} 
\label{ResidualD1}
The two exogenous states describing the deseasonalized residual demand $R$  and the the deseasonalized fuel price $F$
are modeled as stochastic processes defined on a filtered probability space  $(\Omega,\mathcal{G},\mathbb{G},\mathbb{P})$.
In particular, that space supports a two-dimensional standard Wiener  process $W=(W_R,W_F)^\top$ on $[0,T]$ driving the SDEs \eqref{Residual_deseason} for $R$ and  \eqref{Fuel_F} for  $F$ given below.
The filtration $\mathbb{G}$  is assumed to be generated by $W$, that is, $\G=\G^W=(\mathcal{G}^W(t))_{t\in [0, T]}$  with the $\sigma$-algebras $\mathcal{G}^W(t)=\sigma\{W(s), s\le t\}$, augmented by the $\mathbb{P}$-null sets, so that, $\mathbb{G}$ satisfies the usual assumptions   of right-continuity and completeness.

\paragraph{Residual demand}
Recall, the residual demand $\Resi(t)$ describes the imbalance, i.e., the difference between  the demand for thermal energy in the building and supply of thermal energy  provided by the local production units  at time $t \in [0,T]$, and is  measured in $kW$. It can take positive as well as negative values in $\R$.  The residual demand is positive if demand exceeds supply, and negative  if supply exceeds demand (overproduction). For the formulation of the stochastic optimal control problem it will be useful to  decompose  the residual demand on $[0,T]$ as follows
\begin{align}
	\Resi(t)=\mu_R(t)+\ResD(t).
	\label{Residual_D}
\end{align}  
Here, $\widetilde{\mu}_R: [0,T] \rightarrow \R $ is a bounded deterministic function  describing the residual demand's seasonality, and $\ResD=\Resi-\mu_R$ is the deseasonalized residual demand, which we model as an   Ornstein-Uhlenbeck  process which is mean-reverting to zero, and defined by the SDE
\begin{align}
	d\ResD(t)=-\beta_R\ResD(t)dt+\sigma_R(t) dW_R(t), \qquad \ResD(0)=\resd_0 \in \R,
	\label{Residual_deseason}
\end{align}
where $W_R$ is a standard Wiener process. The parameter $\beta_R>0$ is called mean-reversion speed and  $\sigma_R:[0,T] \rightarrow [\underline{\sigma}_R,\overline{\sigma}_R]$  is a deterministic and bounded  volatility function  with some constants $0<\underline{\sigma}_R\le \overline{\sigma}_R$.  The use of a time-dependent volatility function $\sigma_R$ makes it possible to link the stochastic variability of $\Resi$ to certain seasonal patterns.  
Simple but already meaningful examples of the above-mentioned seasonality function have the form  
\begin{align}
	\label{seasonality}
	\mu_R(t)&=
	\mu_R^0   + \sum\limits_{i=1}^M \mu_R^i\cos \frac{2\pi (t-t^i_R) }{\delta^i_R},
\end{align}
where the constant $\mu_R^0 \in \R$ denotes the long-term mean, and  $\mu_R^i>0,~ i=1,\ldots,M$, 
the amplitude of the seasonality  component, $\delta_R^i$ the length of the seasonal period
and  $t_R^i$ some time shift parameter for the $i$-th seasonality component (representing the time of the seasonal peak of the residual demand) 
and $M$ is the number of components. 

\begin{figure}[h!]
	\centering 	
	\includegraphics[width=0.49\textwidth,height=0.32\linewidth]{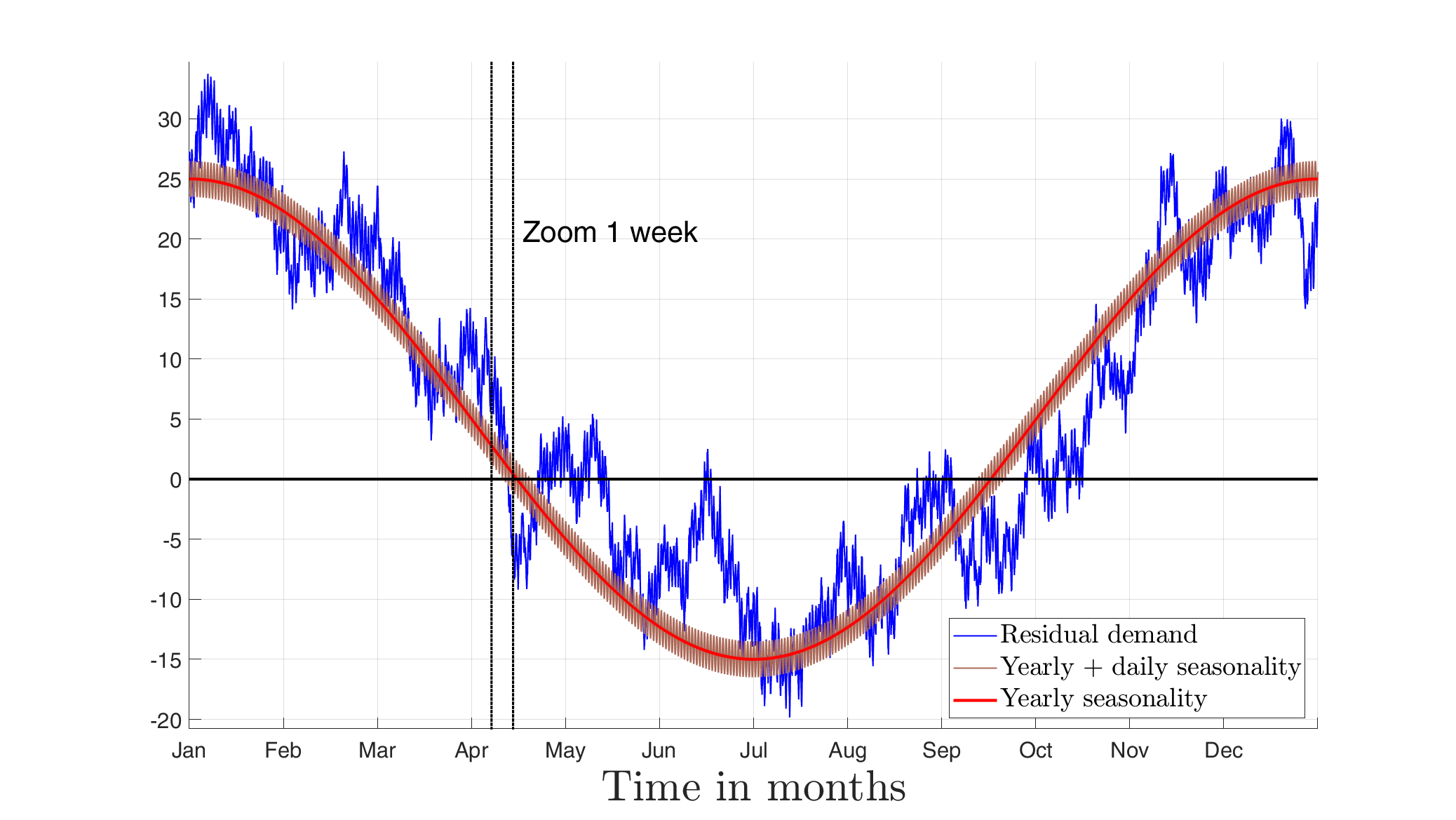}
	\hspace*{-0.06\textwidth}
	\includegraphics[width=0.49\textwidth,height=0.32\linewidth]{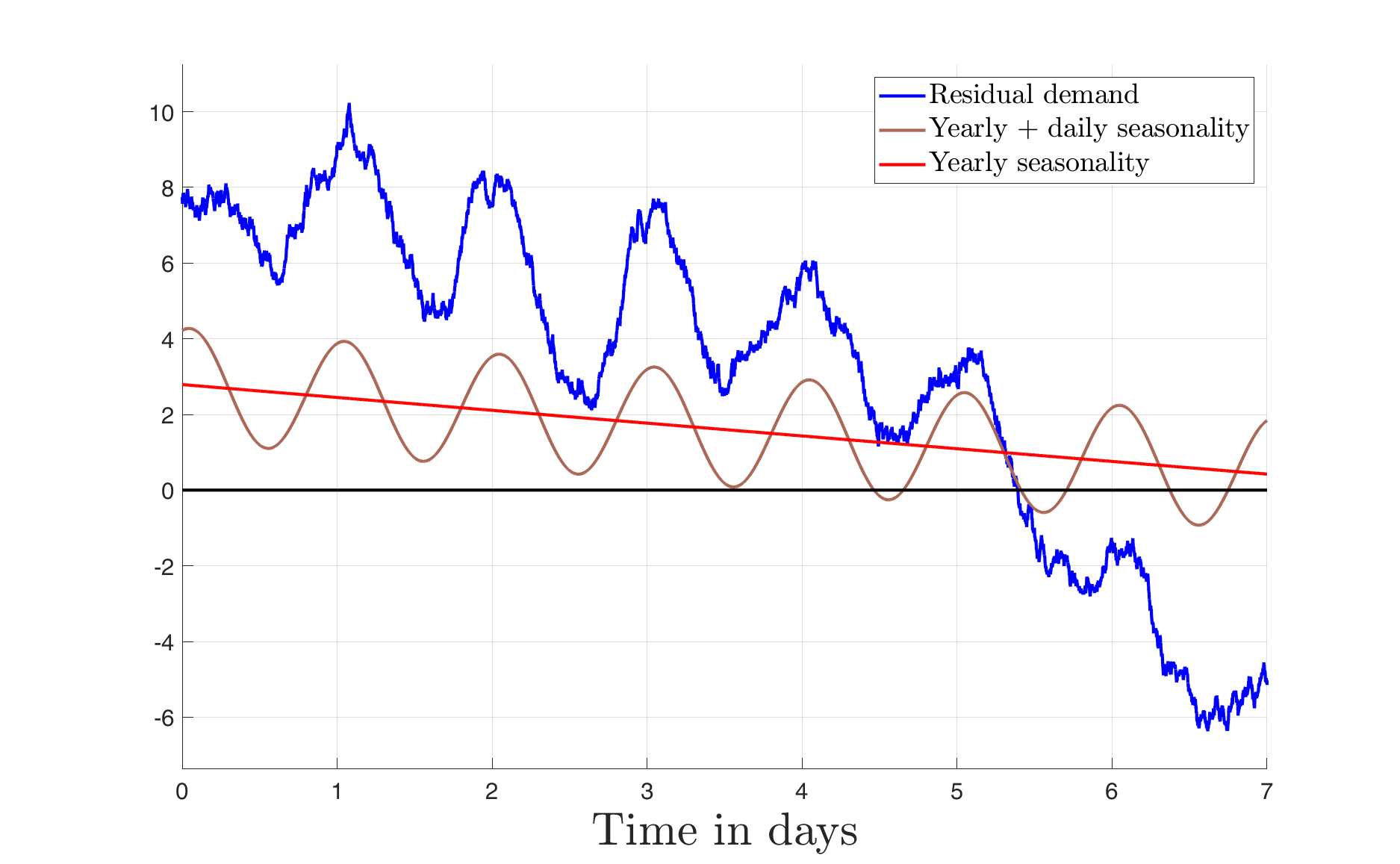}
	{\caption[Residual demand. Left: Over a period of one year. Right: One week zoom in.]{Residual demand $\Resi$ over a period of one year (left) and   a zoom on a week in mid-April (right) with parameters $\beta=0.5$, $\sigma^R=2$,  $\mu_R^0=5$, $\mu_R^2=20$, $\mu_R^2=1.5$, $\delta_R^1=365$ days, and $\delta_R^2=1$ day. Blue solid line for residual demand $ \Resi$, red and brown solid lines show the yearly component and the seasonality function $\mu_R$ with combined yearly and daily components. The long-term mean level  $\mu_R^0$ is shown as a black solid line.		
	}}
	\label{fig:resdemand}
\end{figure}

\paragraph{Fuel price}  Similarly to \eqref{Residual_D}, the fuel price $\Fu $ is a stochastic  process  which can be decomposed as $\Fu(t)=\mu_F(t)+\FuD(t)$  	
where,  $\mu_F$ is a bounded deterministic seasonality function. Typical examples are functions  as in \eqref{seasonality} but with only a yearly component and no daily component. The deseasonalized fuel price 	 $\FuD$ is modeled by  an Ornstein-Uhlenbeck  process which is mean reverting to zero. It captures the random effects in the fuel price, and  is defined by the SDE 
\begin{align}
	\label{Fuel_F}		
	d\FuD(t)&=-\beta_F\FuD(t)dt+\sigma_F(t)dW_F(t), \quad \FuD(0)=\fud_0 \in \R.
\end{align}

Here,  $\beta_F>0$ deontes the mean-reversion speed, and 	$\sigma_F:[0,T] \rightarrow  [\underline{\sigma}_F,\overline{\sigma}_F]$ is  a deterministic and bounded  volatility function  with some constants $0<\underline{\sigma}_F\le \overline{\sigma}_F$.
For further simplification, we can assume that the fuel price is a known deterministic function of time  or even constant. Then  the fuel price can be removed from the state variables of the control problem which reduces the dimension of the state space  by one.

\subsection{ Spatio-temporal Temperature Distribution in the Geothermal Storage}
\label{Geothermal_S}
The dynamics of the  spatial temperature distribution in a geothermal storage can be described mathematically by a linear heat equation with  convection term and appropriate boundary and interface conditions.    

\subsubsection{Two-dimensional Model}
\label{model2D}
The setting is based  on \cite[Sect.~2]{TakamWunderlichPamen2023}. For self-containedness  and the convenience of the reader,  we recall in this subsection the description of the model.
The GES area is assumed to be a cuboid for which we consider a two-dimensional rectangular cross-section. As mentioned above 
$Q=Q(t,x,y)$ is the temperature at time $t \in [0,T]$ in the point $(x,y)\in \Domainspace=(0,l_x) \times (0,l_y)$ where $l_x,l_y$ are the width and height of the storage. 
Fig.~\ref{bound_cond1} depicts the domain $\Domainspace$ and its boundary $\partial \Domainspace$.  The domain $\Domainspace$ is divided into three parts. The first $\Dm$ is  filled with the storage  medium (soil)  which   is assumed to be homogeneous for simplicity,  and characterized by the constant material parameters $\rhom, \kappam$, and $\cpm$ denoting  mass density,    thermal conductivity and   specific heat capacity, respectively. The second is $\Df$, it  represents the \phxs and is filled with a fluid (water) with constant material parameters $\rhof, \kappaf$ and $\cpf$. The fluid moves with time-dependent velocity $v_0(t)$ along the \phxsk. For the sake of simplicity, we will restrict ourselves to the case frequently encountered in practice, where the pumps that move the liquid are either switched on or off. Thus, the velocity $v_0(t)$ is piecewise constant taking   only the two values   $\vconst>0$ and zero.  Finally, the third part is the interface $\DInterface$ between $\Dm$ and $\Df$. 
We neglect modeling the wall of the \phx and assume perfect contact between the \phx and the ground for simplicity. Details are given below in \eqref{Interface1} and \eqref{eq: 13f1}. We summarize as follows 

\begin{figure}[h!]		
	\begin{center}
		
		\includegraphics[width=0.9\linewidth]{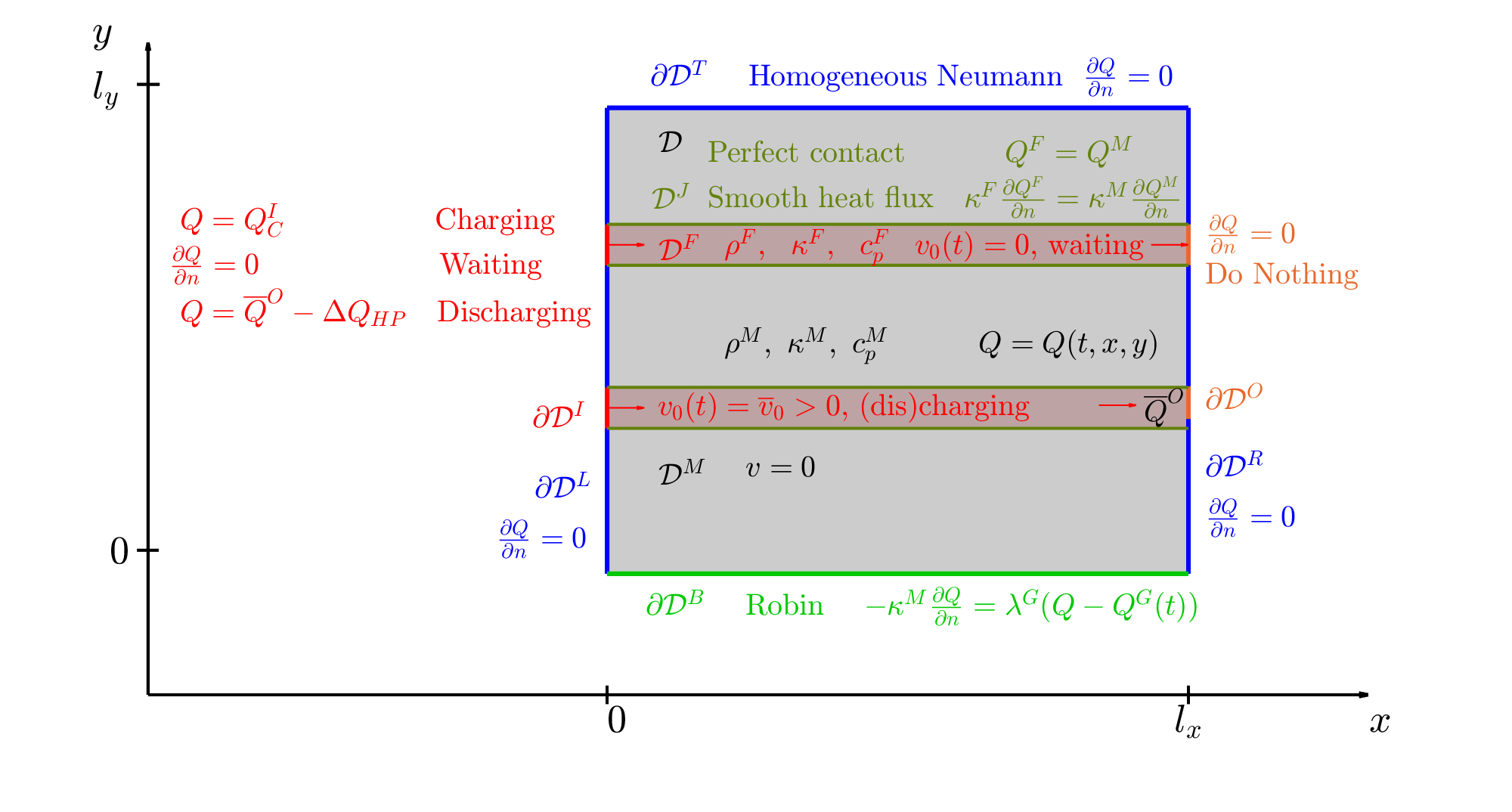}
	\end{center}
	\caption{\label{bound_cond1} Two-dimensional model of the geothermal storage: decomposition of the domain $\Domainspace$,   boundary and interface conditions. Red arrows indicate the direction of the flow.} 
\end{figure}

\begin{assumption}
	\label{assum1}~
	\begin{enumerate}
		\item  Material parameters of the medium   $\rhom, \kappam, \cpm$ in the domain $\Dm$  and of the fluid  $\rhof, \kappaf, \cpf$ in the domain  $\Df$ are constants.
		\item  Fluid velocity is piecewise constant,   that is, $v_0(t)=\begin{cases}
			\vconst>0,  &\text{pump~on,}\\
			0, & \text{pump off.}
		\end{cases}$
		\item  Perfect contact   and impermeability at the interface between fluid and  medium.
		\item  $\Qg(t)\le \QinC$ for all $t\in [0,T]$. 
	\end{enumerate}
\end{assumption}

The last assumption ensures that the GES temperature and thus also the outlet temperature $\Qout$ is always bounded by the inlet temperature during charging $\QinC$.
Note that the  findings derived from our two-dimensional model, where $\Domainspace$ is a rectangular cross-section of a box-shaped storage, can be also used for three-dimensional model under the assumption that the storage domain is a cuboid of depth $l_z$ with a homogeneous temperature distribution in the $z$-direction.

\paragraph{Heat equation}
The temperature $Q=Q(t,x,y)$ in the external storage is governed by the linear heat equation with convection term
\begin{align}
	\rho \cp \frac{\partial Q}{\partial t}={\nabla \cdot (\kappa \nabla Q)}-
	{\rho v \cdot \nabla (\cp Q)},\quad  (t,x,y) \in (0,T]\times \Domainspace \setminus \DInterface,   \label{heat_eq}
\end{align}
where  $\nabla=\big(\frac{\partial}{\partial x},\frac{\partial}{\partial y}\big)$ denotes the gradient operator. The first term  on the right hand side describes diffusion, while the second represents convection of the moving fluid in the \phxsk.
Further,
$v=v(t,x,y)$ $=v_0(t)(v^x(x,y),v^y(x,y))^{\top}$  denotes  the velocity vector with $(v^x,v^y)^\top$ being the normalized directional vector of the flow. 
According to Assumption \ref{assum1}, the material parameters $\rho,\kappa, \cp$  
depend on the position $(x,y)$ and take the values $\rhom,\kappam, \cpm$ for points in $\Dm$ (medium) and  $\rhof, \kappaf, \cpf$ in  $\Df$ (fluid).

Note that there are no sources or sinks inside the storage and therefore the above heat equation appears without forcing term.
Based on this assumption, the heat equation (\ref{heat_eq}) can be written as
\begin{align}
	\frac{\partial Q}{\partial t}={d}\Delta Q-
	{ v \cdot \nabla Q},\quad  (t,x,y) \in (0,T]\times \Domainspace \setminus \DInterface,   \label{heat_eq2}
\end{align}
where $\Delta=\frac{\partial^2}{\partial x^2}+\frac{\partial^2}{\partial y^2}$ is the Laplace operator and  $ d= d(x,y)$ is the thermal diffusivity which is piecewise constant with values  $ d^\dom=\frac{\kappa^\dom}{\rho^\dom \cp^\dom}$  with $\dom=\medium$ for  $(x,y)\in \Dm $ and $\dom=\fluid$   for  $(x,y)\in \Df$, respectively.   
The initial condition $Q(0,x,y)=Q_0(x,y)$ is given by the initial temperature distribution $Q_0$ of the storage.

\subsubsection{Boundary  and Interface Conditions}
For the description of the boundary conditions we decompose the boundary $\partial\Domainspace$ into several subsets as depicted in Fig.~\ref{bound_cond1} representing the insulation on the top and the side,  the open bottom, the inlet and outlet of the \phxsk.  Further, we have to specify conditions at the interface between \phxs and  soil. The inlet, outlet and the interface  conditions model the heating and cooling of the storage via \phxsk. We distinguish between the two regimes ``pump on'' and ``pump off'', where for simplicity, we assume perfect insulation at inlet and outlet if the pump is off.  Since we focus on the heat transfer over the open bottom boundary, we neglect the losses over the insulated top and side and assume perfect insulation at these boundaries. 
This leads to the following boundary conditions.

\begin{itemize}				
	\item \textit{Homogeneous Neumann condition} describing perfect insulation on the top  and the side 
	\begin{align}\frac{\partial Q}{\partial \normalvec}=0,\qquad (x,y)\in 
		\partial \Dtop\cup \partial \Dleft \cup \partial \Dright, 
		\label{Neumann1}
	\end{align}
	where $\partial \Dleft=\{0\} \times  [0,l_y] \backslash \partial \Din$, ~
	$\partial \Dright=\{l_x\} \times  [0,l_y] \backslash \partial \Dout, \partial\Dtop=[0,l_x] \times \{l_y\}$  and $\normalvec$ denotes the outer-pointing normal vector.
	\item \textit{Robin condition} describing heat transfer at the bottom 
	\begin{align}
		-\kappam\frac{\partial Q}{\partial \normalvec}=\heattransfer(Q-\Qg(t)), \qquad  (x,y)\in 
		\partial \Dbottom,
		\label{Robin}
	\end{align}
	with $\partial \Dbottom=(0,l_x) \times \{0\}$, where $\heattransfer>0$ denotes the  heat transfer coefficient  and $\Qg(t)$ the underground temperature.	
	\item 
	\textit{Mixed boundary conditions at the inlet:}   Here one has to distinguish three cases. \\
	(i) \textit{Charging:}	The pump is on ($v_0>0$), the fluid arrives at the storage with the inlet temperature $\QinC$ which is a given constant, and we can impose a Dirichlet boundary condition.\\
	(ii) \textit{Waiting:} The pump is off ($v_0=0$), and we set a homogeneous Neumann condition describing perfect insulation.\\				
	(iii) \textit{Discharging:} In this mode, the pump is switched on ($v_0>0$) and the operation of the heat pump must be taken into account. The fluid from the  \phx outlet  arrives at the inlet of the heat pump with  the  temperature  $\Qout(t)$ and returns to the inlet of the geothermal storage  at  the temperature $\Qout(t)-\Delta T_{HP}$, where  $\Delta T_{HP}>0$ is called heat pump spread and assumed to be a given constant.		
	Mathematically, this leads to a coupling condition which links  the inlet temperature to the average temperature at the \phx outlet.
	
	Summarizing, we obtain:
	\begin{align}
		\left\{\!\!	\begin{array}{rll}	
			Q &=~~\QinC, &\text{ ~charging,} \\[0.5ex]
			\frac{\partial Q}{\partial \normalvec}&=~~0, &\text{ ~waiting,} \\[0.5ex]
			Q &=~~ \Qout(t)-\Delta Q_{HP}, &\text{ ~discharging},
		\end{array}
		\quad (x,y)\in 	\partial \Din .\right.
		\label{input}
	\end{align}

	\item \textit{``Do Nothing'' condition} at the outlet in the following sense. If  the  pump is on ($v_0>0$), then the total heat flux directed outwards can be decomposed into a diffusive  heat flux given by $\kappaf\frac{\partial Q}{\partial \normalvec}$ and a convective  heat flux given by $v_0 \rhof \cpf Q$.  In our model, we can neglect the diffusive heat flux. This leads to a homogeneous Neumann condition
	\begin{align}\frac{\partial Q}{\partial \normalvec}=0,\qquad (x,y)\in 
		\partial \Dout. 
		\label{output}
	\end{align}
	If the pump is off, then we  assume  perfect insulation which is also described by the above condition.	
	
	\item \textit{Smooth heat flux} at interface $\DInterface$ between fluid and soil leading to a coupling condition
	\begin{align}
		\kappaf\frac{\partial \Qff}{\partial \normalvec }=\kappam\frac{\partial \Qmm}{\partial \normalvec },
		\qquad  (x,y)\in \DInterface.
		\label{Interface1}
	\end{align}	
	Here, $\Qff, \Qmm$ denote the temperature of the fluid inside the \phx and of the soil outside the \phxk, respectively.
	Moreover, we assume that the contact between the \phx and the medium is perfect which leads to a smooth transition of a temperature, that is, we have 
	\begin{align}
		\Qff=\Qmm,\qquad  (x,y)\in \DInterface. \label{eq: 13f1}
	\end{align} 
\end{itemize}

\subsubsection{Aggregated Characteristics}	
\label{sec:Aggregate}
The solution of the heat equation \eqref{heat_eq2} allows to describe the spatio-temporal temperature distribution in the GES.  However, for the optimal management of a residential heating system equipped with a GES, which we consider in this article, it is not necessary to know the complete information about this distribution,  that is, $Q(t,x,y)$ at each individual grid point.	
Instead, it is sufficient to know only the dynamics of some aggregated quantities of the temperature distribution, e.g. the average temperature in the storage medium, in the \phx and at the outlet boundary of the \phxk. They can be computed by post-processing after solving the PDE. Some of these aggregated characteristics are presented below.
We start with  the average temperature in the medium and the fluid. They are  given by
\begin{align}
	\Qav^\dom(t) &= \frac{1}{|\mathcal{D}^\dom|} \iint_{\mathcal{D}^\dom} Q(t,x,y) dxdy, \quad \dom=\medium,\fluid,
	\label{Average-M-F}
\end{align}
where $\Qm(t)$ denotes the  average temperature in the medium and $\Qf$ the average temperature of the fluid of the \phxsk. These quantities allow to determine the amount of thermal energy stored. Below in Subsect.~\ref{Sec-contol-Discre}, we will  require that $\Qm$ satisfies the state constraint $\Qm(t) \in [\underline{q}, \overline{q}]$ for all $t\in[0,T]$ with $\underline{q}< \overline{q}$, which leads to  a state-dependent control constraint.

Another aggregated characteristics is the average temperature at the outlet boundary given by 
\begin{align}
	\label{Average-Outlet}
	\Qout(t) &=	 \frac{1}{|\partial \Dout|} \int_{\partial \Dout} Q(t,x,y)\mathrm{d}s.
\end{align}
We refer to  \cite{takam2025energies} and \cite[Section 5]{TakamWunderlichPamen2023} for more details. 

\subsection{Average Temperature in the Internal Storage}\label{InternalStorage} The IES is assumed to be a stratified water tank. For the ease of exposition we assume that the technical implementation is such that there is a constant and known bottom temperature $\underline{p}$ and top temperature $\overline{p} > \underline{p}$.  Further, we only consider the  spatially averaged temperature of the IES  as state variable, which we denote by $P$. 

We assume that charging the GES by transferring heat from the IES is such that a (conventional) pump sends fluid with an inlet temperature $\QinC$ from the IES to the GES.  After passing the GES  the fluid returns to the IES with the  GES outlet temperature $\Qout(t)$.  Charging the IES by  transferring heat from the GES is such that heat pump raises the temperature of the fluid from $\Pout$, to a given constant and known temperature $\Pin$ with $\Pin >\underline{p}$.  In this process,  the heat pump uses the heat extracted from the GES and additional electrical energy. More details about charging and discharging processes are given in Sect. \ref{sec:Heating-S}  
\begin{figure}[h!]
	\centering  
	\includegraphics[width=12cm,height=6cm]{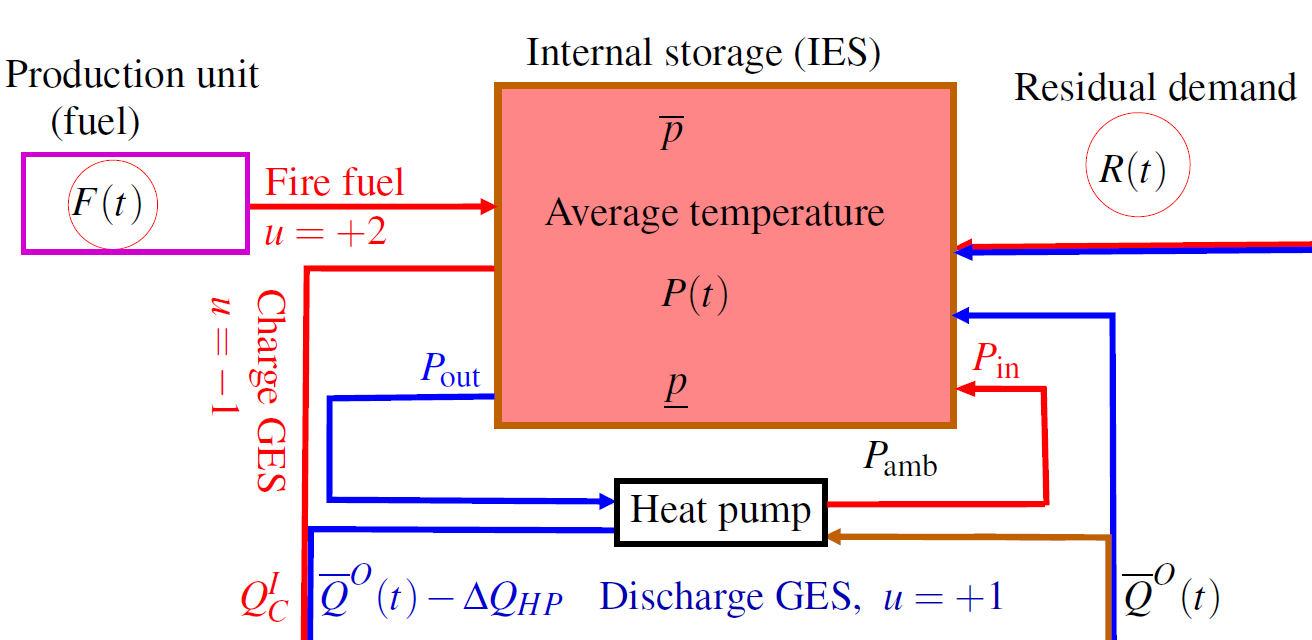}
	\caption{Changes of thermal energy in the internal storage  }
	\label{fig:internal_P}
\end{figure}

The changes of thermal energy in the IES are  either due to inflow of energy from overproduction, or from the GES, or by firing fuel. Further, there may be  an outflow of energy to satisfy the positive residual demand, or to the GES, or due to the loss to environment as depicted in Fig.~\ref{fig:internal_P}. The environment is assumed to be at   temperature $\Pamb(t)< \underline{p}$  which may vary over time.
The dynamics of the IES is then given by 
\begin{align}
	dP(t)=(\drift_P(t,\ResD(t),\Qout(t),u(t))-\gamma(P(t)- \Pamb(t)))dt, ~ P(0)=p_0 \in  [\underline{p},\overline{p}],
	\label{Internal_S}
\end{align}
where $\ResD$ is the  deseasonalized residual demand given by equation \eqref{Residual_deseason} and $\Qout$ is the average temperature of the fluid at the outlet of the \phxs   given in \eqref{Average-Outlet}. The quantity $-\gamma(P-\Pamb)$ is the heat loss to the environment at time $t$, where  $\gamma={\kappa^P A^P}/({m^P\cpw})$ is a constant with $m^P$ the mass of the water in the IES, $\cpw$ the specific heat capacity of the water, $\kappa^P$ the  heat transfer coefficient, $A^P$ the total surface of the IES. The function $\drift_P$ is given by
\begin{align}
	\drift_P(t,\resd,q,\afix)= \begin{cases}
		- \zeta_P {(\mu_R(t)+\resd)}+ \zeta_F,& \afix=\Fuel,\\
		- \zeta_P {(\mu_R(t)+\resd)} + \zeta_C(\Pin-\Pout),& \afix=\Charg,\\
		- \zeta_P {(\mu_R(t)+\resd)},	&\afix=\phantom{+}\Wait, \\
		- \zeta_P {(\mu_R(t)+\resd)}- \zeta_D(\QinC-q),&\afix=\Discharg,\\
		~~ 0, &\afix=\Spill,
	\end{cases}
	\label{Internal_Psi}
\end{align}
for  $t\in[0,T], r\in\R,q\in[\underline{q},\overline{q}], \afix\in\feasible$. Here,  $ \zeta_P=(m^P\cpw)^{-1},~ \zeta_D=  \zeta_PL_D$, $ \zeta_C= \zeta_PL_C$, and $ \zeta_F= \zeta_P L_F$ are energy conversion factors with some positive constants $L_D,~L_C,~L_F$. 
The increment of the total thermal energy in the IES at time $t$ is then given by $m^P\cpw dP(t)$. In the dynamics of the IES we assume that  $\Pin > \Pout$.  The residual demand  $\resi=\mu_R(t)+\resd$ appears in the dynamics of $P$ with negative sign because a positive residual demand decreases the temperature in the IES and a negative residual demand increases temperature in the IES.  Note that during over-spilling ($\afix=\Spill$)  the ODE \eqref{Internal_S} for  $P$  reads $dP=-\gamma(P-\Pamb) dt$.  It describes the cooling of IES by the colder environment. For simplicity,  any other inflows and outflows of energy are neglected by setting $\drift_P=0$. This will facilitate the time discretization below in Subsect.~\ref{sec:TimeDiscretization} and the construction of a transition operator with additive Gaussian noise in Proposition \ref{prop:state_recursion}.   

Below in Subsect.~\ref{Sec-contol-Discre}, we will  require that $P$ satisfies the state constraint $P(t) \in [\underline{p}, \overline{p}]$ for all $t\in[0,T]$ which leads to  a state-dependent control constraint.
Note that \eqref{Internal_S} is a random ODE, since the drift coefficient depends on the stochastic process $\ResD$. In contrast to an SDE, it does not contain a diffusion term and is not driven by a Wiener process. 

\section{Approximation of State Dynamics}
\label{State-approx}

We recall that  the dynamics of $R$ and $F$ are given by the  SDEs \eqref{Residual_deseason} and \eqref{Fuel_F}, and  $P$ is governed by the random ODE \eqref{Internal_S}.
The state  $Q$, the temperature distribution in the GES,  is governed the initial boundary value problem for the   linear parabolic PDE \eqref{heat_eq2}
\begin{align*}
	\frac{\partial}{\partial t} Q(t,x,y)&=d(x,y)\Delta Q(t,x,y)-
	v(t,x,y) \cdot \nabla Q(t,x,y), \quad  (t,x,y) \in (0,T]\times \Domainspace \backslash \DInterface, \\
	~Q(0,x,y)& =Q_0, \quad (x,y)\in \Domainspace \\
	\text{+ boundary}	& \text{ and interface conditions given in \eqref{Neumann1} through \eqref{eq: 13f1}}. 
\end{align*}
Giving that one of the state components, the temperature $Q=Q(t,x,y)$ in the GES, depends not only on time $t$ but also on spatial variables and its dynamics is governed by a PDE,  the state process $\State=(R,F,P, Q)^\top$ takes values in an infinite-dimensional space. This leads to a \emph{non-standard} stochastic optimal control problem for which we do not expect to find a tractable solution. We therefore consider the following finite-dimensional approximation.

\paragraph{Approximation of the temperature distribution in the GES}
A detailed inspection of the control system, and in particular   the  state and control constraints in Subsect.~\ref{Sec-Discret-STOPT} and the associated performance criterion in Subsect.~\ref{Sec-contol-Discre}  show that,  we do not really need to know the complete spatio-temporal temperatures distribution. As already outlined in Subsection \ref{sec:Aggregate}, where we introduced aggregated quantities of the temperature distribution,  it is sufficient to know only the dynamics of the average temperature in the storage medium, in the \phx and at the outlet boundary of the \phxk, which we denoted by $\Qm,\Qf,\Qout$, respectively.  Let $\overline Z:[0,T]\to \R^{\nO}$ denote vector function containing as entries the desired above mentioned aggregated characteristics describing the dynamics of the spatial temperature distribution in the GES. We denote by $\overline Z$ the output variable. A typical example for $\nO=3$ is $\overline Z=(\Qm,\Qf,\Qout)^\top$.

In  the previous  work \cite{TakamWunderlich_redu2024}, we have shown that by applying model order reduction techniques to the spatially discretized heat equation \ref{heat_eq2}, one can find quite accurate approximations of the dynamics of the output $\overline Z$  from a suitable chosen low-dimensional system of ODEs.  We only briefly outline the corresponding approximation steps here.
\begin{enumerate}
	\renewcommand{\labelenumi}{(\roman{enumi})}
	\item  The first step is to approximate the original mathematical model given by the PDE \eqref{heat_eq2}, by an ``analogous model‘’ with time-invariant dynamics. We assume that, in contrast to the original model, the fluid always moves at a constant velocity $\vconst$, even during the pump-off periods. 
	During these periods, the fluid in the original model is at rest and is only subject to diffusive heat propagation.   In order to imitate this behaviour of the fluid at rest by a moving fluid, we assume that the temperature 	at the inlet of the \phx is equal to the average temperature of the \phx fluid  $\Qf$.  In this way, the average temperature of the fluid is maintained. Approximation errors may occur, as a possible temperature gradient along the \phx is not maintained and is replaced by an almost flat temperature distribution. 
	
	Mathematically, in this setting, the  initial boundary value problem for the heat equation \eqref{heat_eq2} now contains a modified boundary condition at the inlet. During pump-off periods,  the homogeneous Neumann boundary condition in \eqref{input} is replaced  by a nonlocal coupling condition  similar the condition  we already impose during discharging the GES. For further details,  we refer to our work \cite[Section 5]{TakamWunderlich_redu2024}. 
	
	\item The next step is a   spatial discretization of the initial boundary value problem for the PDE \eqref{heat_eq2} for $Q$, modified as described above. Following our  previous work \cite{TakamWunderlichPamen2023}, this  leads to  a  system of ODEs for a vector function $\widetilde Y:[0,T]\to\R^n$ with a  high  dimension $n$. For a finite difference discretization as in \cite{TakamWunderlichPamen2023}, the entries $\widetilde Y_i(t)$ represent the temperature at the $i$-th grid point.	The desired aggregated characteristics, that is, the entries of the output $\overline Z$,  can be approximated by  linear combinations of the entries of the $\widetilde Y$. Mathematically, this can be expressed as  $\overline Z(t) \approx \widetilde Z(t)= \widetilde C \widetilde Y(t)$, where  $\widetilde Z\in \R^{\nO}$  denotes  the output approximation,  and $\widetilde C$ an $\nO\times n$ matrix.
	
	\item Finally, we apply balanced truncation model order reduction as presented in \cite[Section 6]{TakamWunderlich_redu2024} to the resulting  $n$-dimensional  system of ODEs. This leads to a system of ODEs of dimension $\ell \ll n$ for an $\ell$-dimensional reduced-order state $Y$ from which approximations of the output variables are obtained by linear combination of the entries of $Y$,   that is, $Z(t)\approx \overline Z(t)= \Cav Y(t)$, where $\Cav$ is a $\nO\times \ell$ matrix. For the above mentioned example with $\nO=3$ and output  $\overline Z=(\Qm,\Qf,\Qout)^\top$, we find the approximations of the form $\Qav^\dom \approx \QavAppr^\dom =\Cav^\dom Y(t)$, $\dom=\medium,\fluid,\outlet$, where the $\nO$-dimensional  vectors $\Cav^\dom$ form the rows of the matrix $\Cav\in\R^{3 \times \ell}$.  
\end{enumerate}

To summarise, the dynamics of the GES and the  response of the aggregated characteristics to the control  process $u(t)$ can be  represented approximately as follows
\begin{align}
	\label{reduced-S}
	\begin{split}
		dY(t)& =\drift_{Y}(t,Y(t),u(t))dt, \quad Y(0)=\Rstate_0 \in  \R^\ell,\\
		Z(t)& =\Cav Y(t),
	\end{split}
\end{align}
where
$\drift_{Y}(t,y,\afix)=\mat{A}(\afix)y+\mat{B} g(t,\afix) $   for $\afix\in\actionspace$, 
$\Rstate=(\rstate)^{\top} \in \R^\ell$ denotes the reduced-order state, and $\Rstate_0$ its initial value at time $t=0$. The second equation in \eqref{reduced-S} is an algebraic equation called output equation in which  $\Cav$ is the $\nO\times \ell$-output-matrix introduced above.  
Its entries depend on the type of information the manager wishes to get from the system, and $Z \in \R^{\nO}$ is the vector of approximated  aggregated characteristics.   The control-dependent  system matrix $A(\afix)$ is given by (see \cite[Section 5.1]{takam_PhD_2023})
\begin{align}
	&A(\afix)=\begin{cases}
		\overline{\mat{A}}, & \afix =\Discharg~ (\text{charge GES}),\\
		\overline{\mat{A}}+B^1\OutputOut ,& \afix =\Charg~(\text{discharge GES}),\\
		\overline{\mat{A}}+B^1\OutputF ,& \text{otherwise},
	\end{cases} 
	\label{Matrix_AB}
\end{align}
with $\overline{\mat{A}} \in \R^{\ell \times \ell}$ and $B=(B^1, B^2)\in \R ^{\ell \times 2}$ is the input matrix,  resulting from the application of balanced truncation model order reduction.
The input function $g$ is given by
\begin{align}
	g(t,\afix)= {\Qin_{\afix}(t) \choose Q_{G}(t)}  ~~\text{with}~ ~
	\Qin_{\afix}(t)=\begin{cases}
		~~~\QinC,            &\afix=\Discharg~ (\text{charge GES}),\\
		- \HPspread,   &\afix=\Charg~(\text{discharge GES}),\\
		~~~~0, & \text{else}.		
	\end{cases}
	\label{Input}
\end{align}
Recall, $\QinC$ is the GES inlet temperature during charging, $\HPspread$ the heat pump spread, and $Q_G(t)$ the underground temperature.


\paragraph{Approximated state dynamics}  Now, we are able  to approximate the dynamics of the infinite dimensional state  $(R,F,P, Q)^\top$ by the dynamics of a finite-dimensional state   $\State = (R,F,P, Y^{\top})^\top$ taking values in a state space $\Statespace\subset \R^{l+3}$, where $\ell$ denotes the dimension of the vector function $Y$ which replaces and approximates the last state component, the spatial temperature distribution $Q$.   The continuous-time dynamics of this state process $\State$  is given by  the SDE \eqref{Residual_deseason}  for the residual demand $R$, the SDE \eqref{Fuel_F} for fuel price $F$, the random ODE \eqref{Internal_S} for $P$, the average temperature in the IES, and finally, the $\ell$-dimensional  system of ODEs \eqref{reduced-S} for $Y$. 

\begin{remark}\label{rem:cont_SOC}
	In such a continuous-time setting, the cost-optimal management of a residential heating system with a GES can be formulated as an optimization problem for a (degenerate) controlled diffusion process.  Applying the standard dynamic programming solution approach  results in the Hamilton-Jacobi-Bellman (HJB) equation, a highly nonlinear PDE that serves as a necessary optimality condition. Apart from various analytical problems arising from the fact that the state process is a degenerate diffusion process, this approach suffers from the curse of dimensionality. The main problem is that the HJB equation can only be solved numerically, and the computational cost of solving such nonlinear PDEs with $\ell+3$ state variables is prohibitively high. Therefore, in the next section, we proceed with a  more tractable approach based on time discretization.
\end{remark}

\section{Stochastic Optimal Control Problem}
\label{sec:Discrite-OP}
In this section, we formulate the cost-optimal management problem  for a residential heating system mathematically as a discrete-time stochastic optimal control problem.  The derivation is based on a  time discretization of the continuous-time state dynamics studied above.
The starting point is the assumption, often fulfilled in reality, that the control can only be changed at a few discrete points in time and held constant in between. 
To avoid discretization errors, it is assumed that the state variables evolve continuously in time between discrete time points and are determined by the ODEs and SDEs derived in sections \ref{sec:StateDynamics} and \ref{State-approx}, which capture the dynamics of the system.
In the derived control system, however, only the state values at the discrete points in time are used for the control decisions. 
The control problem is then  treated as a MDP and solved using dynamic programming techniques.

\subsection{Time Discretization}
\label{sec:TimeDiscretization}
Recall that, the state process $X=(R,F,P,Y^\prime)^\prime$ takes values in  the state space $\Statespace \subset \R^d$ of dimension $d=\ell+3$, where $\ell$ is the dimension of the reduced-order state $Y$ from the approximation of the GES temperature distribution, see \eqref{reduced-S}. We subdivide the planning horizon  $[0,T]$ into $N$  uniformly spaced subintervals of length $\Delta t = T/{N}$ and define the time grid points  $t_n = n \Delta t$ for $n =0,\ldots,N$.	

\paragraph{Discrete-time control and state process}
Now, we want to study the dynamics of the state process $X$ sampled at the discrete time points $t_n, n=0,\ldots,N$, under the  following assumption.
\begin{assumption}(Piecewise constant control).
	\label{Ass:ConstantControl}\\
	The control $u$ is kept constant between two consecutive grid points of the time discretization, that is 
	\begin{align}
		u(t) = u(t_n)=: \ud_n,\quad \text{for }~ t \in [t_n,t_{n+1}),~~n=0,\ldots,N-1,
	\end{align}		
	and denote by ${\aprocess}=(\ud_n)_{n=0,\ldots,N-1}$ the sequence of controls or actions $\ud_n \in \actionspace=\{\Spill,\Charg,\Wait,\Discharg,\Fuel\}$ chosen by the decision maker in period $[t_n,t_n+1]$.	
\end{assumption}

Next, we derive the discrete-time dynamics of the  sampled state process $X=(X_n)_{n=0,\ldots,N}$ in terms of a recursion. 
Starting point is the dynamics of the the continuous-time state process given  by  the SDE \eqref{Residual_deseason} for  $R$,  SDE \eqref{Fuel_F} for  $F$, the random ODE \eqref{Internal_S} for $P$, and  the $\ell$-dimensional  system of ODEs \eqref{reduced-S} for $Y$. Since all these equations are linear, we can benefit from the availability of closed-form solutions that allow us to avoid discretization errors in the derivation of the discrete-time state dynamics. To simplify the calculations, we assume the following for the time-dependent model parameters.
\begin{assumption}(Piecewise constant model parameters).
	\label{Ass:ConstantParameter}\\
	The time-varying seasonalities $\mu_R,\mu_F$, volatilities $\sigma_R,\sigma_F$, the ambient temperature $\Pamb$,  	and the undergound temperature $\Qg$ are constant between two consecutive grid points of the time discretization, that is
	\begin{align}
		\mu_{\dom}(t)& =\mu_{\dom}(t_n)=\mu_{\dom,n}, &  \sigma_{\dom}(t)& =\sigma_{\dom}(t_n)=\sigma_{\dom,n},\quad \dom=R,F,\\
		\Pamb(t)& =\Pamb(t_n)=\Pambn{n}, & \Qg(t)& =\Qg(t_n)=\Qg_{n},
	\end{align}		
	$\text{for }~ t\in[t_n,t_{n+1}),~~n=0,\ldots,N-1.$	
\end{assumption}
Note that this assumption is not very restrictive, since in reality these parameters can be expected to change only slowly over time and to be nearly constant on short time scales such as the periods during which the control is held constant.

\begin{proposition}[Transition operator]
	\label{prop:state_recursion}
	Under Assumptions \ref{Ass:ConstantControl} and \ref{Ass:ConstantParameter}, there exists a sequence of independent three-dimensional standard normally distributed random vectors 
	for $(\Noise_n)_{n=1,\ldots,N}$ with $\Noise_{n}= (\Noise^R_n,\Noise^F_n,\Noise^P_n)^\top\in\mathcal{N}(0_3,\mathds{I}_3)$ such that the state process $X=(X_n)_{n=0,\ldots,N}$ satisfies the recursion 
	\begin{align}
		\label{state_recursion}
		X_{n+1}=  \mathcal{T}(n,X_n,\ud_n,\Noise_{n+1}), \quad X_0=X(0)=x_0. 
	\end{align} 
	for $n=0,\ldots N-1$. Here,    $\mathcal{T}:\{0,\ldots,N-1\}\times \Statespace\times\actionspace\times \R^3 \to \Statespace $ 
	is  the transition operator which is for $n\in \{0,\ldots,N-1\}$, $x=(r,f,p,y^\top)^\top\in\Statespace,\afix\in \actionspace$, $\noise=(\noise^R,\noise^F,\noise^P)^\top\in \R^3$  defined as 	$\mathcal{T}=(\mathcal{T}^R,\mathcal{T}^F,\mathcal{T}^P,(\mathcal{T}^Y)^\top)^\top$ with 
	\begin{align}
		\label{state_recursion2}
		\begin{split}
			\mathcal{T}^R(n,x,\afix,\noise) &= r \mathrm{e}^{-\beta_R \Delta_N}+\Sigma_{R}(n)\, \noise^R,\\ 
			\mathcal{T}^F(n,x,\afix,\noise) &= f\mathrm{e}^{-\beta_F \Delta_N}+\Sigma_{F}(n)\, \noise^F,\\
			\mathcal{T}^P(n,x,\afix,\noise) & = \mathrm{e}^{-\gamma\Delta_N}p +\mathcal{H}(n,r,y,\afix) 
			+\Sigma_{P}(n,\afix) \big(\sqrt{1-\rho^2(n,\afix)}\, \noise^P + \rho(n,\afix)\,\noise^R\big),\\
			\mathcal{T}^Y(n,x,\afix,\noise) & = \overline{\mathcal{T}}^Y(n,y,\afix)=  \mathrm{e}^{A({\afix})\Delta_N}y ~~+\big(\mathrm{e}^{A({\afix})\Delta_N}-\mathds{I}_{\ell}\big)A^{-1}B\,g_n(\afix),
		\end{split}	
	\end{align}		
	where  $\mathds{I}_{\ell}$  is an $\ell \times \ell$ identity matrix, $A(\afix), B$ are given in \eqref{Matrix_AB},   the functions $\Sigma_{R},\Sigma_{F},\Sigma_{P},\rho$ and  $\mathcal{H}$ are given in Appendix \ref{app:trans_op_details}.  Further, $g_n(\afix)=g(t_n,\afix)$ denotes the constant input function in $[t_n,t_{n+1})$ with $g(t,\afix)$ given in \eqref{Input}.
	
\end{proposition}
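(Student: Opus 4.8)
The plan is to derive each component of the recursion \eqref{state_recursion} by solving the corresponding linear (S)ODE on a generic interval $[t_n,t_{n+1})$ of length $\Delta_N=\Delta t$, exploiting the fact that on this interval the control $\ud_n=\afix$ is constant (Assumption \ref{Ass:ConstantControl}) and the model parameters $\mu_\dom,\sigma_\dom,\Pamb,\Qg$ are constant (Assumption \ref{Ass:ConstantParameter}). Since the four blocks decouple in a triangular fashion --- $R$ and $F$ are autonomous, $Y$ is autonomous, and $P$ is driven by $R$ and $Y$ --- I would treat them in that order and then assemble $\mathcal{T}$.

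\textbf{Step 1: the exogenous states $R$ and $F$.} Each of \eqref{Residual_deseason}, \eqref{Fuel_F} is a scalar Ornstein--Uhlenbeck SDE with constant coefficients on $[t_n,t_{n+1})$. The variation-of-constants formula gives, for $\dom\in\{R,F\}$,
\begin{align*}
\Stor^\dom_{n+1} = \mathrm{e}^{-\beta_\dom\Delta_N}\Stor^\dom_n + \int_{t_n}^{t_{n+1}} \mathrm{e}^{-\beta_\dom(t_{n+1}-s)}\sigma_{\dom,n}\,dW_\dom(s).
\end{align*}
The stochastic integral is a centred Gaussian random variable, independent of $\mathcal{G}^W(t_n)$, with variance $\sigma_{\dom,n}^2\,(1-\mathrm{e}^{-2\beta_\dom\Delta_N})/(2\beta_\dom)$; writing it as $\Sigma_\dom(n)\,\noise^\dom$ with $\Sigma_\dom(n)^2$ equal to this variance and $\noise^\dom$ a standard normal yields the first two lines of \eqref{state_recursion2}. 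Independence across $n$ follows from the independent-increments property of $W$.

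\textbf{Step 2: the reduced-order GES state $Y$.} On $[t_n,t_{n+1})$ the system \eqref{reduced-S} is the linear autonomous ODE $dY = (A(\afix)Y + Bg_n(\afix))\,dt$ with constant matrix $A(\afix)$ (from \eqref{Matrix_AB}) and constant forcing $Bg_n(\afix)$ (from \eqref{Input}). The matrix exponential solution is
\begin{align*}
Y_{n+1} = \mathrm{e}^{A(\afix)\Delta_N}Y_n + \Big(\int_0^{\Delta_N}\mathrm{e}^{A(\afix)s}\,ds\Big)Bg_n(\afix) = \mathrm{e}^{A(\afix)\Delta_N}Y_n + \big(\mathrm{e}^{A(\afix)\Delta_N}-\mathds{I}_\ell\big)A(\afix)^{-1}Bg_n(\afix),
\end{align*}
which is exactly $\mathcal{T}^Y$; here one uses invertibility of $A(\afix)$ (guaranteed by the balanced-truncation construction, which produces a Hurwitz $\overline{A}$ and hence invertible $A(\afix)$). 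This block is deterministic, so $\overline{\mathcal{T}}^Y$ does not depend on $\noise$.

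\textbf{Step 3: the IES temperature $P$.} Equation \eqref{Internal_S} is a scalar linear random ODE, $dP = (\drift_P(t,R(t),\Qout(t),\afix) - \gamma(P - \Pambn{n}))\,dt$, whose solution by variation of constants is
\begin{align*}
P_{n+1} = \mathrm{e}^{-\gamma\Delta_N}P_n + \int_{t_n}^{t_{n+1}}\mathrm{e}^{-\gamma(t_{n+1}-t)}\big(\drift_P(t,R(t),\Qout(t),\afix) + \gamma\Pambn{n}\big)\,dt.
\end{align*}
Now $\Qout(t)$ must be replaced by its approximation $\QoutAppr(t)=\OutputOut Y(t)$ with $Y$ from Step 2, which is an explicit (affine) function of $t$, $Y_n$ and $\afix$; and $R(t)$ is the OU solution from Step 1, which is $R_n$ times a deterministic factor plus a stochastic-integral term. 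Because $\drift_P$ in \eqref{Internal_Psi} is affine in $(r,q)$ for each fixed $\afix$, the time integral splits into (i) a deterministic part --- an explicit function of $n$, $R_n$, $Y_n$, $\afix$, collected into $\mathcal{H}(n,r,y,\afix)$ plus the $\gamma\Pambn{n}$ contribution --- and (ii) a stochastic part, namely a deterministic (Volterra-type) integral against $dW_R$. The latter is again centred Gaussian and independent of $\mathcal{G}^W(t_n)$; its variance defines $\Sigma_P(n,\afix)^2$. Crucially, this $dW_R$-integral is correlated with the increment $\int_{t_n}^{t_{n+1}}\mathrm{e}^{-\beta_R(t_{n+1}-s)}\sigma_{R,n}\,dW_R(s)$ driving $\mathcal{T}^R$; computing the covariance of these two Gaussian integrals and normalising gives the correlation coefficient $\rho(n,\afix)$, and one writes the $P$-noise as $\Sigma_P(n,\afix)(\sqrt{1-\rho^2}\,\noise^P + \rho\,\noise^R)$ with $\noise^P$ a fresh standard normal independent of $\noise^R$, reproducing the third line of \eqref{state_recursion2}.

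\textbf{Assembling and the main obstacle.} Collecting Steps 1--3, setting $\noise_{n+1}=(\noise^R,\noise^F,\noise^P)^\top$, and noting that $(\noise_{n+1})_{n}$ are independent across $n$ (disjoint time intervals, independent Wiener increments) and within each $n$ have the stated joint law after the $\rho$-rotation, gives \eqref{state_recursion}--\eqref{state_recursion2}. The routine parts are the matrix-exponential and variation-of-constants computations of Steps 1 and 2. The genuinely delicate step is Step 3: one must carefully substitute the OU representation of $R(t)$ and the affine-in-$t$ representation of $\QoutAppr(t)$ into the $P$-integral, use Fubini/stochastic-Fubini to interchange the time integral with the $dW_R$-integral, and then keep precise track of the resulting nested deterministic integrals so as to obtain closed forms for $\mathcal{H}$, $\Sigma_P$ and especially the cross-covariance yielding $\rho$ --- these are exactly the formulas deferred to Appendix \ref{app:trans_op_details}. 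I would present the identification of $\mathcal{H},\Sigma_P,\rho$ via the covariance structure of the two Gaussian integrals as the crux, with everything else being bookkeeping.
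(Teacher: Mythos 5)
Your proposal is correct and follows essentially the same route as the paper's proof: closed-form variation-of-constants solutions of the linear SDEs/ODEs on each period, identification of the stochastic integrals as Gaussian with deterministic integrands, the bivariate Gaussian structure of $(R_{n+1},P_{n+1})$ yielding $\Sigma_R,\Sigma_P,\rho$ as the conditional variances and correlation, independence of the $F$-block via independence of $W_R,W_F$, and the matrix-exponential solution for the deterministic $Y$-block. Your write-up is in fact more explicit than the paper's sketch (which defers the covariance computations to the cited thesis), correctly isolating the stochastic-Fubini/cross-covariance step for $P$ as the only nontrivial part.
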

\begin{proof}
	The SDE  \eqref{Residual_D}  for  $R$ and  the random ODE \eqref{Internal_S}  for $P$ enjoy closed-form solutions yielding that  $R_{n+1}$ and $P_{n+1}$   can be expressed in terms of stochastic integrals with respect to Wiener process $W_R$ with a deterministic integrand  from which it can be derived that the conditional distribution of  $R_{n+1},P_{n+1}$ given $R_n=r,P_n=p$ and $\ud_n=\afix$ is bivariate Gaussian. Computing mean and the covariance matrix of this distribution leads to the given expressions. Note that  $\Sigma^2_{R}(n)$ and $\Sigma^2_{P}(n)$ are the  variances, and $\rho(n)$ represents the correlation coefficient of this distribution. Similarly, solving the SDE \eqref{Fuel_F} on $[t_n,t_{n+1}]$ shows that  $F_{n+1}$ can be also expressed in terms of a stochastic integral  with respect to Wiener process $W_F$ with a deterministic integrand. This implies that the conditional distribution of  $F_{n+1}$ given $F_n=f$ is  Gaussian  and independent of $R_{n+1},P_{n+1}$ since $W_R,W_F$ are independent.
	Finally, $Y_{n+1}$ can be obtained by solving the system of linear ODEs \eqref{reduced-S} for $Y(t)$ on $[t_n,t_{n+1}]$ with initial value $Y_n=y$. Under the Assumptions \ref{Ass:ConstantControl} and \ref{Ass:ConstantParameter}, this is for each fixed control $\afix$, a system of  autonomous ODEs with a constant forcing term. From the closed-form solution follows the expression for $\mathcal{T}^Y$. For more details, we refer to \cite[Chapter 6]{takam_PhD_2023}. 
\end{proof}

In view of the recursion \eqref{state_recursion} we can consider the  sampled state process $X=(X_n)_{n=0,\ldots,N}$ as a discrete-time stochastic process, which is defined  on  the filtered probability space  $(\Omega,\mathcal{F},\mathbb{F},\mathbb{P})$ with the \textit{filtration} $\mathbb{F}=(\mathcal{F}_n)_{n=0,\ldots,N}$. Here,  $\mathcal{F}_n=\sigma(\{\Noise_1, \ldots, \Noise_n\})$ is the sigma-algebra generated by the first $n$ random variables $\Noise_1,\ldots, \Noise_n$ and $\mathcal{F}_0=\{\varnothing,\Omega\}$ is the trivial $\sigma$-algebra.

Further, the recursion \eqref{state_recursion} for the state process $X$ shows that the conditional distribution of the state  $X_{n+1}$ at the end of the period $[t_n,t_{n+1}]$, given the state $X_n$ and the control $\ud_n$ at the beginning of that period, is Gaussian. This property will be helpful for the formulation the control problem as Markov decision process below in Subsect.~\ref{Sec-Discret-STOPT} , and the derivation of the associated transition kernel. Note that the Gaussian distribution is degenerate since the last state component $Y$ follows  deterministic dynamics. In contrast, the conditional distribution of the first three components $R,F,P$ is a non-degenerate three-dimensional Gaussian distribution, where $F$ is independent of $R,P$.\\

\subsection{State-Dependent Control Constraints}
\label{Sec-contol-Discre}
The residential heating system under consideration is subject to various operational constraints.   A first one is a box constraint for the state component $P$, the IES average temperature. As already mentioned in Subsection \ref{InternalStorage}, it is required that $P(t) \in [\underline{p}, \overline{p}]$,  for all $t\in[0,T]$ with $\underline{p}<\overline{p}$, where $\underline{p}$ and $\overline{p}$ represent an empty and a full IES, respectively.  A second condition is that the authorities only permit the operation of a GES for environmental reasons, if the GES temperature does not exceed a certain range. We model this by    a box constraint to the average temperature in the medium of the form  $\QmAppr(t) \in [\underline{q}, \overline{q}]$ with $\underline{q}< \overline{q}$, where $\underline{q}$ and $\overline{q}$ represent an empty and a full GES, respectively.
This is sufficient, as an inhomogeneous spatial temperature distribution is averaged after some time due to the diffusive propagation of heat in the GES. 
Since $\QmAppr=\OutputM Y$,  the above constraint requires the reduced-order state $Y(t)$ to take values  in the subset $\mathcal{Y}\in\R^\ell$ between the two hyperplanes  defined by  $\OutputM Y=\underline{q} $ and  $\OutputM Y=\overline{q} $ for all $t\in[0,T]$, that is  in
\begin{align}
	\label{RstateSpace}
	\mathcal{Y}\in\R^\ell =\{y\in\R^\ell : \underline{q} \le \OutputM  y \le \overline{q}\}.  
\end{align}
In a model in which the controls can be continuously changed over time, such restrictions mean that charging a storage (IES or GES) is no longer permitted when the storage is full, while discharging can no longer be selected for an empty storage. However, due to Assumption \ref{Ass:ConstantControl},  we are limited to controls that are kept constant between two consecutive points in time of the time grid. In contrast to continuous operation, this means that charging or discharging is no longer permitted when the storage  is “almost” full or empty. 

Therefore, for each time step $n \in \{0,1,\ldots, N-1\}$, one has to derive a subset of the set of all feasible actions $\actionspace$ given in \eqref{feasible A}, that contain the feasible actions available to the controller depending on the state $X_n$ at time $n$. The latter should be defined in such a way that the IES and GES do not become full or empty within the next period $[t_n,t_{n+1}]$.  In view of the dynamics of the state components $P$ and $\RState$ describing the IES and GES,  it is sufficient to consider the state of IES and GES at the end of the period at time $t_{n+1}$. This lead to the following still rather implicit definition  
$$\feasible_0(n,x)=\{\afix \in \actionspace \mid \mathcal{T}(n,x,\afix,\Noise_{n+1}) ~\text{is such that IES and GES are not full or empty} \}$$  
for $n=0,\ldots,N-1$ and $x\in\mathcal{X}$.

Now,  we will show how this simple and intuitive idea can be formulated in a mathematically rigorous way.
Recall, that we have from recursion \eqref{state_recursion}, $P_{n+1}=\mathcal{T}^P(n,X_n,\aprocess_n,\Noise_{n+1})$. Further, the conditional distribution of the IES temperature  $P_{n+1}$ given $X_n$ is Gaussian. This prohibits to satisfy the box constraint
$P_{n+1} \in [\underline{p}, \overline{p}]$ with certainty and requires a relaxation. Therefore, we allow over- and undershooting, that is, $P_{n+1} >\overline{p}$ and $P_{n+1}<\overline{p}$, but constrain the probabilities for these events by some ``small'' tolerance value $\epsilon < 1$.
Furthermore, the recursion \eqref{state_recursion} for $P$ shows that,  the Gaussian distribution of $P_{n+1}$ depends on the deseaonalized residual demand $R_n$ at time  $t_n$. As a realization of a Gaussian random variable, these values are potentially unbounded,  making it impossible to define reasonable relaxed constraints to $P_{n+1}$. Note that $R$ is an exogenous state and  is not subject to control.  Therefore, we restrict the following derivations to a bounded interval $[\underline{r},\overline{r}]\subset \R$ with $\underline{r}<\overline{r}$ in which the random variables $R_n$ take values with very high probability.	
Since the sampled continuous-time process $R$ is a Gaussian Ornstein-Uhlenbeck process for which the marginal distribution of $R(t)$ converges asymptotically for $t\to\infty$ to the stationary distribution $\mathcal{N}(0, \sigma_R^2/(2\beta_R))$. We apply the $3\sigma$-rule and choose 
$[\underline{r},\overline{r}]= [-3{\sigma_{R}}/(2 \beta_R)^{1/2}, +3 {\sigma_{R}}/(2 \beta_R)^{1/2}]$, which carries $99.97\%$ of the probability mass of this distribution. 	

The above mentioned restrictions are formalized by the following truncation operator $\truncop:\mathcal{X}\to \mathcal{X}_\truncop= [\underline{r},\overline{r}]\times [\underline{p},\overline{p}]\times\R^\ell$, which is defined for $x=(r,p,y^\prime)^\prime\in \mathcal{X}$ by 
$$\truncop(x)=(\underline{r}\one_{(-\infty),\underline{r})}(r) +r\one_{[\underline{r},\overline{r}]}(r) +\overline{r}\one_{(\overline{r},\infty)}(r),~ \underline{p}\one_{(-\infty),\underline{p})}(p) +p\one_{[\underline{p},\overline{p}]}(p) +\overline{p}\one_{(\overline{p},\infty)}(p),~y^\prime)^\prime, $$
where $\one$ denotes the indicator function. 
This operator maps the values of $r$ and $p$ outside $[\underline{r},\overline{r}]$ and $[\underline{p},\overline{p}]$ to the nearest boundaries of the intervals. Then the desired set of feasible actions is expressed for $n=0,\ldots,N-1$ and $x\in\mathcal{X}$ by 

\begin{align}
	\label{FeasibleActions}
	\feasible(n,x)& =\feasible^P(n,x) \cap\feasible^Y(n,x), \qquad \text{where} \\
	\begin{split}
		\feasible^P(n,x)&=\big\{\afix \in \actionspace \mid~
		\mathbb{P}\big( \mathcal{T}^P(n,\truncop(x),\afix,\Noise_{n+1}) >\overline{p}\big)\le\eps \qquad  \text{ and }\\
		&\phantom{=\big\{\afix \in \actionspace \mid~ }~~ \mathbb{P}\big( \mathcal{T}^P(n,\truncop(x),\afix,\Noise_{n+1}) <\underline{p}\big)\le\eps\big\}, 
	\end{split}	
	\label{box-constP}\\
	\feasible^Y(n,x)&=\big\{\afix \in \actionspace \mid~\OutputM \overline{\mathcal{T}}^Y(n,y,\afix)\in [\underline{q},\overline{q}]\big\}.
	\label{box-constY}
\end{align}

The  explicit description of the above subsets describing state-dependent control constraints is based on  the following  assumptions to the model parameters. 

\begin{assumption}~\label{Ass-model-par}
	\begin{enumerate}
		\item For all $n=0,\ldots,N-1,   x\in \mathcal{X}_\truncop, \noise \in \R^3$ it holds
		\begin{align}
			\mathcal{T}^P(n,x,\Fuel,\noise) > \mathcal{T}^P(n,x,\Charg,\noise) > \mathcal{T}^P(n,x,\Wait,\noise) > \mathcal{T}^P(n,x,\Discharg,\noise).			
			\label{P_monoton}
		\end{align}
		\item \label{deomp_IES}
		For all $n=0,\ldots,N-1$ it holds that within the period $[t_n,t_{n+1}]$  \\		
		\begin{tabular}[t]{l}
			a full GES cannot  be completely discharged: \\
			\hspace*{4em}$\mathcal{Y}^C(n)=\big\{y\in \mathcal{Y}: ~\OutputM\overline{\mathcal{T}}^Y(n,y,\Charg) \ge \underline{q}\big\}\neq \varnothing$, \\
			an empty  GES  cannot be fully charged:\\
			\hspace*{4em} $\mathcal{Y}^D(n)=\big\{y\in \mathcal{Y}: ~\OutputM\overline{\mathcal{T}}^Y(n,y,\Discharg) \le \overline{q}\big\} \neq \varnothing$,\\
			there are states of the GES for which the action $\afix=\Wait$ (wait) is feasible\\
			\hspace*{4em} $\mathcal{Y}^W(n) =\mathcal{Y}\backslash (\mathcal{Y}^C(n) \cup \mathcal{Y}^D(n)) \neq \varnothing$.
		\end{tabular}\\[0.5ex]		
		\item 
		For all  $x=(r,p,y^\prime)^\prime \in \mathcal{X}_\truncop$, and  $n=0,\ldots,N-1$  it holds  that within the period $[t_n,t_{n+1}]$  \\ 
		\begin{tabular}[t]{l}
			a full IES cannot  be completely discharged:  \\	
			\hspace*{4em}$\big\{p\in[\underline{p},\overline{p}]: ~
			\mathbb{P}\big( \mathcal{T}^P(n,x,\Discharg,\Noise_{n+1}) \ge\underline{p}\big)\ge 1-\eps\big) \big\}\neq \varnothing$   \\[0.5ex]			
			an empty  IES  cannot be fully charged:\\ 
			\hspace*{4em}$\big\{p\in[\underline{p},\overline{p}]: ~
			\mathbb{P}\big( \mathcal{T}^P(n,x,\Fuel,\Noise_{n+1}) \le\overline{p}\big)\ge 1-\eps\big) \big\}\neq \varnothing$ 
		\end{tabular}	\\[0.5ex]		
		\item \label{ass_Qg} The lower and upper bounds for $\QmAppr$ ar such that the underground temperature satisfies\\ 
		\hspace*{4em}$\underline{q} \le \Qg(t)   \le \overline{q}$ \quad for all $t\in[0,T]$.
	\end{enumerate}
\end{assumption}	
\begin{remark}\label{rem:assInterpretation} 
	The first assumption reflects a natural condition for the model parameters, which are such that for a given state $x$ at time $t_n$ and a realization $\noise$ of the random variable $\Noise$, the realizations of the IES temperature at time $t_{n+1}$, $P_{n+1}=p$, satisfy the given order with respect to the different actions $\afix\in \feasible$. Thus, firing fuel leads to a higher $p$ than the transfer of heat from GES to IES. The latter dominates waiting, and waiting leads to a higher $p$ than the transfer of heat from IES to GES.	
	
	The second and third assumptions can always be fulfilled if the length of the time intervals $\Delta_N$ is chosen small enough. Finally, the fourth 
	assumption  states that a fully discharged GES is colder than the underground, so that a heat flow to the GES can be induced as intended, and the GES acts as a heat production unit. Furthermore, the average temperature of a full GES dominates the temperature of the underground.
\end{remark}

Next, we describe the $\feasible^Y(n,x)$ and $\feasible^P(n,x)$ separately. 

\paragraph{Description of the set of feasible actions ${\feasible^Y(n,x)}$}
As already outlined above,  to satisfy the box constraint to the average temperature in the GES medium
$	\QmAppr(t)=\OutputM\RState(t) \in [\underline{q},\overline{q}]  \text{ for all }~t \in [0,T]$,
the reduced-order state $\RState(t)$ has to take values in the subset $\mathcal{Y}\in\R^\ell$ given in  \eqref{RstateSpace}.  
The restriction to piecewise constant controls requires to stop charging or discharging, if at a grid point $t_n, n=0,\ldots,N-1,$  the average temperature $\QmAppr_n=\QmAppr(t_n)$ is already close to $\overline{q}$ or  $\underline{q}$, respectively, such that it will not exceed these boundaries  in the period between $t_n$ and $t_{n+1}$. 
\begin{figure}[h!]
	\centering  
	\input{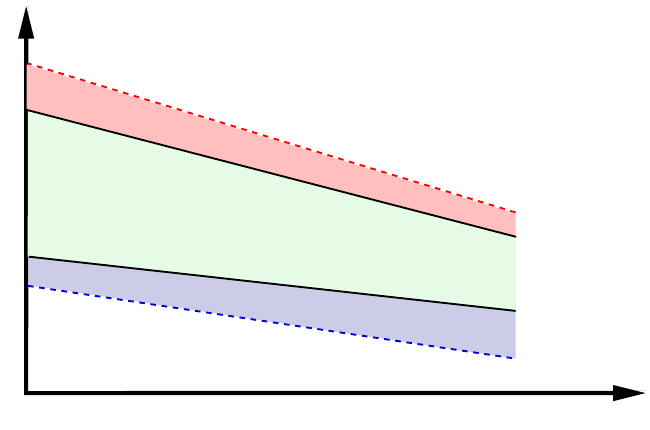tex_t}
	\caption{Characterization of the set of feasible control $\feasible^Y(n,x)$ for $\ell=2$ }
	\label{Feasible-Y-Char}
\end{figure}
Using the recursion $Y_{n+1}=\overline{\mathcal{T}}^Y(n,Y_n,\afix)$  for the   sequence $(\RState_n)$ of reduced-order states given in \eqref{state_recursion2},
this leads to a decomposition of the  subset $\mathcal{Y}$  of the  form $\mathcal{Y}=\mathcal{Y}^C(n) \cup\mathcal{Y}^D(n) \cup \mathcal{Y}^W(n)$ where
$\mathcal{Y}^C,\mathcal{Y}^D,\mathcal{Y}^W$ are given in Assumption \ref{Ass-model-par},\ref{deomp_IES}.
The subsets  $\mathcal{Y}^C(n)$ and $\mathcal{Y}^D(n)$ contain all states of $\mathcal{Y}$ for which charging and discharging the GES is no longer allowed in period $[t_n,t_{n+1})$, respectively. This decomposition is is sketched for the case $\ell=2$ in Figure \ref{Feasible-Y-Char}.
The set $\feasible^Y(n,x)$ of feasible actions introduced in \eqref{box-constY} is then given for $x=(r,f,p,y)$ as 
\begin{align}
	\feasible^Y(n,x) = \begin{cases}
		\actionspace\setminus \{-1\}, & \text{for } y\in  \mathcal{Y}^C(n),\\
		\actionspace\setminus \{+1\}, & \text{for } y\in \mathcal{Y}^D(n),\\
		\actionspace, & \text{for } y\in  \mathcal{Y}^W(n).	
	\end{cases}
	\label{control_constr_Y}
\end{align}
Note that under Assumption \ref{Ass-model-par} the above subsets $\mathcal{Y}^W(n)$ are non-empty for all $n=,\ldots,N-1$.

\paragraph{Description of the set of feasible actions $\feasible_P(n,x)$} 
We denote  the probabilities appearing in the definition of $\feasible_P(n,x)$ in \eqref{box-constP} by
\begin{align*}
	\overline{\pi}^{\afix}(n,x)=\mathbb{P}\big( \mathcal{T}^P(n, \mathcal{R}(x),\afix,\Noise_{n+1}) >\overline{p}\big),~~\text{ and } ~~
	\underline{\pi}^{\afix}(n,x)= \mathbb{P}\big( \mathcal{T}^P(n,\mathcal{R}(x),\afix,\Noise_{n+1}) <\underline{p}\big),
\end{align*} 
which can also be interpreted as the conditional probabilities that the average IES temperature $P_{n+1}$ at the end of the period $[t_n,t_{n+1}]$ exceeds the boundaries $\underline{p},\overline{p}$ if at the beginning of the period, at time $t_n$, the state is $X_n=x$ and the action $\ud_n=\afix$ is selected.

In view of the inequalities \eqref{P_monoton} given in  Assumption \ref{Ass-model-par}, the following monotonicity properties of $\underline{\pi}^{\afix}$ and $\overline{\pi}^{\afix}$ hold  for all $(n,x)$
\begin{align*}
	&\overline{\pi}^{\fuelf}>\overline{\pi}^{\charge}>\overline{\pi}^{\waite}>\overline{\pi}^{\discharge} ~~\text{and}~~
	\underline{\pi}^{\fuelf} <\underline{\pi}^{\charge}<\underline{\pi}^{\waite} <\underline{\pi}^{\discharge}.
\end{align*}
Let us  define the following subsets of the   state space $\Statespace$ by
\begin{align*}
	\overline{\Statespace}_P^{\afix}(n)&=\{x \in \Statespace \mid \overline{\pi}^{\afix}(n,\truncop(x)) \leq \epsilon \}~~\text{and}~~
	\underline{\Statespace}_P^{\afix}(n)=\{x \in \Statespace \mid \underline{\pi}^{\afix}(n,\truncop(x)) \leq \epsilon \}.
\end{align*}
Then for a given state $X_n=x$ at time $t=t_n$, an action $\ud_n=\afix$ is feasible  with respect to the constraints  to $P$ given in \eqref{box-constP} if $x \in \overline{\Statespace}_P^{\afix}(n) \cap \underline{\Statespace}_P^{\afix}(n)$. Thus, the set of feasible controls $\feasible_P$  can be expressed  as
\begin{align*}
	\feasible_P(n,x)=\bigcup_{\afix: ~x \in \overline{\Statespace}_P^{\afix}(n) \cap \underline{\Statespace}_P^{\afix}(n)} \{\afix\},
\end{align*}
and contains those actions $\afix$ for which $x$ is in the above mentioned intersection. 
In view of our model setting and the dynamics of the state process, it can be deduced that the projections of the above subsets onto the sub-state spaces 
$	\Statespace_{RP}=\{(r,p) \mid x=(r,f,p,\Rstate) \in \Statespace\},$
have the following form
\begin{align}
	\overline{\Statespace}_{RP}^{\afix}(n)=\{(r,p) \mid p \le \overline{h}_n^{\afix}(r,\Rstate)\},\quad\text{and}\quad 
	\underline{\Statespace}_{RP}^{\afix}(n)=\{(r,p) \mid p \ge \underline{h}_n^{\afix}(r,\Rstate)\},
\end{align}
for some continuous functions $\overline{h}_n^{\afix},\underline{h}_n^{\afix}: \R \times \R^\ell \to \R$ which are sketched in 
Fig.~\ref{Projectino-feasi}.
\begin{figure}[h!]
	\centering  
	\includegraphics[width=0.5\textwidth]{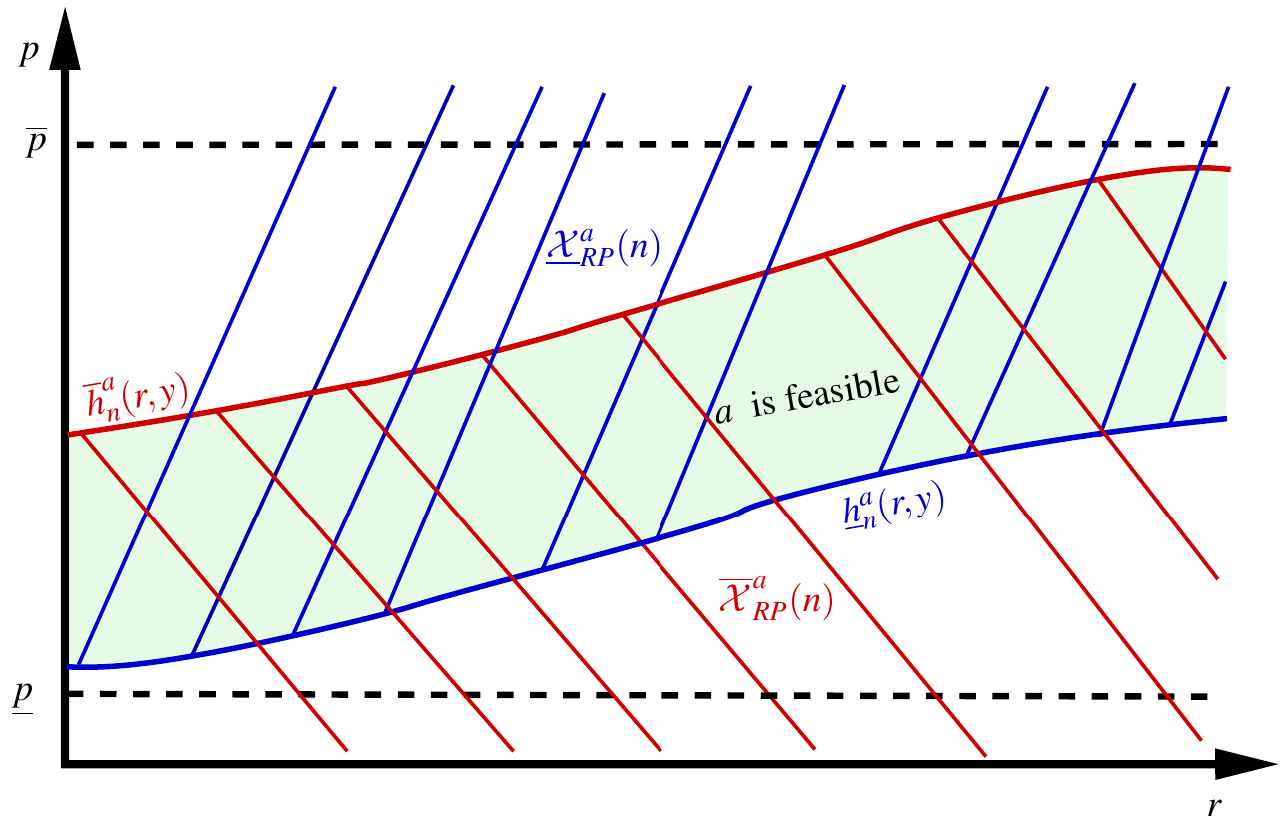}	
	\caption{Projection of $\underline{\Statespace}_P^{\afix}(n)$ and $\overline{\Statespace}_P^{\afix}(n)$ onto $\Statespace_{RP}$	}
	\label{Projectino-feasi}
\end{figure}
Note that only for $\afix=\Discharg$  there is dependence of the functions $\overline{h}_n^\afix, \underline{h}_n^\afix$ on the variable $\Rstate$ since during discharging the IES its average  temperature $P$  depends on the outlet temperature $\QoutAppr=\OutputOut \RState$ of the GES, see \eqref{Internal_S} and \eqref{Internal_Psi}.  It follows, that the  set $\feasible_P(n,x)$ can be subdivided into  8 subsets, which follow from a decomposition of the state space $\Statespace_{RP}$ for the pair of state components $(R,P)$, which is depicted in  Fig~\ref{Feasible-P-Char}. 

\begin{figure}[h!]
	\centering  
	\input{Feasible_SetUp.pdftex_t}
	\caption{Projection of the set of feasible controls $\feasible_P(n,x)$	onto $\Statespace_{RP}$}
	\label{Feasible-P-Char} 
\end{figure}

\subsection{Performance Criterion}
\label{sec:Performance}
We now want to describe the costs arising in the operation of the residential heating system and  derive a performance criterion for the optimization problem. This criterion summarises the expected total discounted costs from the operation of the system and the expected discounted terminal  costs from the evaluation of the stored thermal energy in the IES and GES. 

\paragraph{Admissible controls}
We denote by $\admiss$ the \textit{set of admissible controls} $\aprocess=(\ud_0,\ldots,\ud_{N-1})$ for which, we want to define below the performance criterion that will be minimized  within this set. Since we want to apply dynamic programming methods for solving the optimization problem, we restrict ourselves to Markov or feedback controls defined by $\ud_n=\amarkov(n,x)$ with a measurable function   $\amarkov:\{0,\ldots,N-1\}\times \Statespace\to \actionspace$ which is called \textit{decision rule}. Note that such Markov controls $\aprocess$ are by construction adapted to the  filtration $\mathbb{F}$.
Taking into account the state-dependent control constraints derived in Subsect.~\ref{Sec-contol-Discre} the set of admissible controls is given by  
\begin{align}
	\label{admiss_control_D}
	\begin{split}
		\admiss=\Big\{\aprocess=(\ud_0,\ldots, \ud_{N-1})|~ &
		\aprocess \text{ is a Markov control with }  \ud_n=\amarkov(n,\State^\ud_n)~  \text{for all }n=0,\ldots, N-1,\\
		&	\text{and } \amarkov(n,x)\in \feasible(n,x) \text{ for all } (n,x) \in \{0,1,\ldots, N-1\}\times \Statespace\Big\}.
	\end{split}
\end{align}

\paragraph{ Continuous-time performance criterion}
Let us identify the sequence $\aprocess=(\ud_n)_{n=0,\ldots,N-1}$ defining an admissible  discrete-time control process with the piecewise constant continuous-time control $\overline u(t) =\sum_{n=0}^{N-1} \ud_n \one_{[t_n,t_{n+1})}(t) $,  see Assumption \ref{Ass:ConstantControl}. Then, given the continuous-time state process $X=X^{\overline \uprocess}=(X^{\overline \uprocess}(t))_{t\in[0,T]}$ starts at initial time at $X^{\overline \uprocess}(0)=x_0 \in\Statespace$, the performance of an admissible control $\aprocess \in \admiss$ is given by 
\begin{align}
	\label{performance_cont}
	\mathcal{J}_0(x_0;\aprocess)=
	\mathbb{E}\bigg[\int_{0}^{T}\mathrm{e}^{-\delta s}\runCCont(X^{\overline \uprocess}(s),\overline u(s))\mathrm{d}s + \mathrm{e}^{-\delta  T}\termC(X^{\overline \uprocess}(T))\bigg] 
\end{align}	
where $\delta\ge 0$ denotes the discount factor, and $\runCCont$ and $\termC$ denote the running and terminal costs, respectively, which we specify below. 
Working with discounted costs takes into account the fact that a GES is designed to store thermal energy over longer periods of time (many weeks and months instead of days). Therefore, the economic evaluation should also take into account the time value of money expressed by the discount factor $\mathrm{e}^{-\delta s}$.

\paragraph{Running costs}
These costs rates measured in monetary unites per unit of time, which are  defined by 
\begin{align}
	\runCCont(x,\afix) = \begin{cases}
		\price_Ff, &\afix=\Fuel,\\
		\price_{HP}(\Pin-\OutputOut y)+\price_{P}, &\afix=\Charg,\\
		\price_{P},  &\afix=\Discharg,\\
		0,  & \afix=\{\Wait,\Spill\}.
	\end{cases}
	\label{Running Cost Psi}
\end{align}
Here,  $\price_F$ is the fuel consumption rate of the fuel-fired boiler, $\price_{HP}$  the price of electricity per unit of time consumed by the heat pump  to increase the temperature by $1K$, and  $\price_{P}$ the price of electricity per unit of time consumed by  the pumps moving the fluid between IES and GES.

\paragraph{Terminal costs}	
At the end of the planning period, a terminal cost function $\termC: \Statespace \to \mathbb{R}$ can be used to evaluate the terminal state of the system, in particular the amount of thermal energy stored in the IES and GES, in monetary terms. A typical example are \textit{penalty and liquidation payments} that are applied if the IES temperature $P_N$  and the average temperatures in the GES medium  $\QmAppr_N=\OutputM Y_N$ are below or above a certain user-defined critical value $p_\text{ref} \in [\underline p,\overline p]$ and $q_\text{ref} \in [\underline q,\overline q]$, respectively. Suppose that the terminal state is $X_N=x =(r,f,p,y^\top)^\top$, then the terminal cost is defined by
\begin{align}
	\termC(x) &= 
	m^Qc_p^M\Big( \price^Q_{\text{pen}} (\QmAppr_N-q_{\text{ref}})^- -  \price^Q_{\text{liq}} (\QmAppr_N -q_{\text{ref}})^+ \Big) + m^P\cpf\Big( \price^P_{\text{pen}}(P_N-p_{\text{ref}})^- -  \price^P_{\text{liq}}(P_N-p_{\text{ref}})^+\Big)
	\label{terminal_penalty}
\end{align}
Note that $	m^Qc_p^M(\QmAppr_N-q_{\text{ref}})$ and $m^P\cpf(P_N-p_{\text{ref}})$ describe the amount of thermal energy required to adjust the average temperature in the GES medium from $\QmAppr_N$ to $q_\text{ref}$ and  the average IES temperature from $P_N$ to $p_\text{ref}$, respectively. If these quantities are negative, a penalty is due at a fixed price $ \price^\dom_{\text{pen}}\ge 0, \dom=P,Q,$ per unit of energy, which is added to the total operating costs.  Conversely, if the quantity is positive, a revenue for the liquidation of the residual energy at a fixed price $ \price^\dom_{\text{liq}}\ge 0$ per unit of energy reduces the total costs at terminal time.
The  terminal cost function given in \eqref{terminal_penalty} includes the special cases of (i) only penalty payments for $ \price^\dom_{\text{pen}}>0,  \price^\dom_{\text{liq}}=0$, (ii)  only  liquidation revenues for $ \price^\dom_{\text{pen}}=0,  \price^\dom_{\text{liq}}>0$, and (iii) zero terminal costs for $ \price^\dom_{\text{pen}}=  \price^\dom_{\text{liq}}=0$, $\dom=P,Q$.
An example for the choice of the critical values are  the initial average temperatures in IES and GES which follow from  \eqref{Internal_S} and \eqref{reduced-S}, that is  $p_{\text{ref}}=P_0=p_0$ and $q_{\text{ref}}=\QmAppr_0=\OutputM y_0$. In that case,  \eqref{terminal_penalty} can be used to valuate the storage facilities at horizon time $T$, which can be required if the  management of the residential heating system is transferred to a new owner.

\paragraph{Discrete-time performance criterion}
The performance criterion $\mathcal{J}_0(x_0;\aprocess)$ given in \eqref{performance_cont}	can be rewritten in terms of the sequence of  values $(X_n^\aprocess)_{n=0,\ldots,N}$ obtained from sampling  the continuous-time state process $(X^{\overline \uprocess}(t))_{t\in[0,T]}$. Since  $(X^{\overline \uprocess}(t))$  is defined as solution of a system SDEs and ODEs, and admissible controls are of Markov or feedback type, the state process is a Markov process. This and an application of the tower property of conditional expectation yields
\begin{align}
	\mathcal{J}_0(x_0;\aprocess)& =
	\mathbb{E}\bigg[
	\sum_{k=0}^{N-1} 	\mathbb{E}\Big[\int_{t_k}^{t_{k+1}}\mathrm{e}^{-\delta s}\runCCont(X^{\overline \uprocess}(s),\overline\uprocess(s))\mathrm{d}s\Big| \mathcal{F}_{t_k}\Big] + \mathrm{e}^{-\delta  T}\termC(X_N^{\aprocess})\bigg] \\
	&= \mathbb{E}\bigg[
	\sum_{k=0}^{N-1} \runCDis(k,X_k^{\aprocess},\ud_k) +\termD(X_N^{\aprocess})	\bigg],
	\label{performance_discr}
\end{align}			
where 	for $k=0,\ldots,N-1, x\in\Statespace$ and  $\afix\in \actionspace$
\begin{align}	
	\runCDis(k,x,\afix) &= \mathbb{E}_{k,x}\bigg[\int_{t_k}^{t_{k+1}}  \mathrm{e}^{-\delta s}\runCCont(X^{\overline \uprocess}(s),\afix) \mathrm{d} s \bigg]\quad\text{and}\quad \termD(x)  = \mathrm{e}^{-\delta  T}\termC(x).
	\label{cost_functional}		
\end{align}	
Here,  $\mathbb{E}_{k,x}(\cdot) = \mathbb{E}(\cdot| X_k = x)$ denotes the conditional expectation given that at time $t_k$ the  state is $X_k=x$. 

\paragraph{One-period running costs}
The next lemma shows that the conditional expectation appearing in the definition   of the one-period running costs $	\runCDis(k,x,\afix)$  in \eqref{cost_functional} can be given in closed-form. Hence, the transition from the continuous-time to discrete-time setting does not suffer from additional discretization errors.

\begin{lemma}
	\label{lem:RunningCost-closed}
	The one-period running costs $	\runCDis(k,x,\afix)$	 defined in  \eqref{cost_functional} are given for $k=0,\ldots,N-1$, $x = (r,f,p,y^\top)^\top\in\Statespace$, $\afix \in \actionspace$,  and $\delta >0$ by $\runCDis(k,x,\afix)=\mathrm{e}^{-\delta k\Delta_N}\overline{\runCDis}(k,x,\afix)$ with  
	\begin{align}
		\overline{\runCDis}(k,x,\afix)=\begin{cases}
			\frac{\price_F \mu_{F,k}}{\delta}\big(1-e^{-\delta\Delta_N}\big)+\frac{\price_F  f}{\delta+\beta_F}\big(1-e^{-(\delta+\beta_F) \Delta_N}\big), &\afix=\Fuel,\\[1ex]
			\frac{( \price_{HP}\Pin+ \price_{P})}{\delta}\big(1-e^{-\delta \Delta_N}\big) -
			\price_{HP} \OutputOut \Big\{\big(e^{(A-\delta \mathds{I}_\ell)\Delta_N}-\mathds{I}_\ell\big)\big(A-\delta \mathds{I}_\ell\big)^{-1}y &\\
			+\Big[\big(e^{(A-\delta \mathds{I}_\ell)\Delta_N}-\mathds{I}_\ell\big)\big(A-\delta \mathds{I}_\ell\big)^{-1} -\frac{1}{\delta}\big(1-e^{-\delta \Delta_N}\big)\mathds{I}_\ell\Big]A^{-1}B{g_k(+1)}\Big\}, &\afix=\Charg,\\[1ex]
			\frac{\price_{P}}{\delta}\big(1-e^{-\delta \Delta_N}\big),  & \afix=\Discharg,\\[1ex]
			0,  &  \afix=\{\Wait,\Spill\}.
		\end{cases}
		\label{cost_interm}
	\end{align}
\end{lemma}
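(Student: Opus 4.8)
The plan is to evaluate the conditional expectation in \eqref{cost_functional} separately for each action $\afix\in\actionspace$, exploiting that on the period $[t_k,t_{k+1})$ the control is the constant $\afix$, the model parameters are constant by Assumption~\ref{Ass:ConstantParameter}, and the state components entering the cost rate $\runCCont$ admit closed-form dynamics. First I would substitute $s=t_k+\tau$ with $\tau\in[0,\Delta_N]$, so that $\mathrm{e}^{-\delta s}=\mathrm{e}^{-\delta k\Delta_N}\mathrm{e}^{-\delta\tau}$ and the factor $\mathrm{e}^{-\delta k\Delta_N}$ splits off; this immediately gives $\runCDis(k,x,\afix)=\mathrm{e}^{-\delta k\Delta_N}\overline{\runCDis}(k,x,\afix)$ with
\[
\overline{\runCDis}(k,x,\afix)=\mathbb{E}_{k,x}\Big[\int_0^{\Delta_N}\mathrm{e}^{-\delta\tau}\,\runCCont\big(X^{\overline{u}}(t_k+\tau),\afix\big)\,\mathrm{d}\tau\Big].
\]
For $\afix\in\{\Wait,\Spill\}$ the cost rate vanishes and the claim is immediate; for $\afix=\Discharg$ the cost rate is the constant $\price_P$, so the elementary integral $\int_0^{\Delta_N}\mathrm{e}^{-\delta\tau}\,\mathrm{d}\tau=\tfrac{1}{\delta}\big(1-\mathrm{e}^{-\delta\Delta_N}\big)$ yields the stated expression.

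For $\afix=\Fuel$ the cost rate equals $\price_F$ times the fuel price $\mu_F(t_k+\tau)+F(t_k+\tau)$. Since $\mu_F\equiv\mu_{F,k}$ on $[t_k,t_{k+1})$ by Assumption~\ref{Ass:ConstantParameter}, and $F$ is the autonomous Ornstein--Uhlenbeck process \eqref{Fuel_F} that is mean-reverting to zero, I would interchange expectation and integration by Fubini's theorem---legitimate since $\mu_F$ is bounded and $\sup_{0\le\tau\le\Delta_N}\mathbb{E}|F(t_k+\tau)|<\infty$---and use the closed-form conditional mean $\mathbb{E}_{k,x}[F(t_k+\tau)]=f\,\mathrm{e}^{-\beta_F\tau}$ to get
\[
\overline{\runCDis}(k,x,\Fuel)=\price_F\mu_{F,k}\int_0^{\Delta_N}\mathrm{e}^{-\delta\tau}\,\mathrm{d}\tau+\price_F f\int_0^{\Delta_N}\mathrm{e}^{-(\delta+\beta_F)\tau}\,\mathrm{d}\tau,
\]
and the two elementary integrals reproduce the $\afix=\Fuel$ line of \eqref{cost_interm}.

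For $\afix=\Charg$ the cost rate is $\price_{HP}\Pin+\price_P-\price_{HP}\OutputOut Y(t_k+\tau)$; the constant part integrates to $\tfrac{\price_{HP}\Pin+\price_P}{\delta}\big(1-\mathrm{e}^{-\delta\Delta_N}\big)$. For the $Y$-dependent part I would use that, under Assumptions~\ref{Ass:ConstantControl}--\ref{Ass:ConstantParameter}, $Y$ solves on $[t_k,t_{k+1})$ the autonomous linear ODE $\dot Y=AY+Bg_k(+1)$ from \eqref{reduced-S} with $A:=A(+1)$ from \eqref{Matrix_AB}, whose variation-of-constants solution with $Y(t_k)=y$ is $Y(t_k+\tau)=\mathrm{e}^{A\tau}y+\big(\mathrm{e}^{A\tau}-\mathds{I}_\ell\big)A^{-1}Bg_k(+1)$; here invertibility of $A$ is the same standing assumption already used for $A^{-1}$ in the transition operator of Proposition~\ref{prop:state_recursion}. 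Since $Y$ is deterministic given $Y(t_k)=y$, the expectation is trivial, and it remains to evaluate the matrix-valued integrals $\int_0^{\Delta_N}\mathrm{e}^{-\delta\tau}\mathrm{e}^{A\tau}\,\mathrm{d}\tau=\big(\mathrm{e}^{(A-\delta\mathds{I}_\ell)\Delta_N}-\mathds{I}_\ell\big)(A-\delta\mathds{I}_\ell)^{-1}$, which is valid because $A-\delta\mathds{I}_\ell$ is invertible for $\delta>0$ under the stability of the reduced-order model, together with $\int_0^{\Delta_N}\mathrm{e}^{-\delta\tau}\,\mathrm{d}\tau=\tfrac{1}{\delta}\big(1-\mathrm{e}^{-\delta\Delta_N}\big)$. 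Substituting these into $\int_0^{\Delta_N}\mathrm{e}^{-\delta\tau}\OutputOut Y(t_k+\tau)\,\mathrm{d}\tau$ and collecting terms reproduces the $\afix=\Charg$ line of \eqref{cost_interm}.

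The argument is essentially bookkeeping; the only points needing a little care are the Fubini interchange (controlled by the Gaussian moment bound for the Ornstein--Uhlenbeck process) and the invertibility of $A$ and of $A-\delta\mathds{I}_\ell$ (inherited from the stability of the reduced-order model), and I expect the matrix-valued integration for the charging case to be the main computational---rather than conceptual---hurdle.
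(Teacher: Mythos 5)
Your proposal is correct and follows essentially the same route as the paper's proof in Appendix C: split off the factor $\mathrm{e}^{-\delta k\Delta_N}$, interchange expectation and time integration, use the conditional mean of the Ornstein--Uhlenbeck fuel price (the paper phrases this via the martingale property of the stochastic integral) for $\afix=\Fuel$, and the variation-of-constants solution of the linear ODE for $Y$ together with the matrix integral $\int_0^{\Delta_N}\mathrm{e}^{(A-\delta\mathds{I}_\ell)\tau}\,\mathrm{d}\tau$ for $\afix=\Charg$. Your explicit remarks on the Fubini justification and the invertibility of $A$ and $A-\delta\mathds{I}_\ell$ are details the paper leaves implicit, but the argument is the same.
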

The proof of this Lemma is available in Appendix \ref{Cost-closed-f1}.
\begin{remark}
	The non-discounted case,  that is $\delta=0$, is obtained by passing to the limit $\delta\to 0$ and using 	that  $(1-e^{-\delta \Delta_N})/{\delta} \to \Delta_N$. 
\end{remark} 

\subsection{Optimization Problem}
\label{sec:OP}
The aim of the residential heating system's manager is to find an admissible control proces $\aprocess$ defined by the associated decision rule $\amarkov$ that  minimizes the expected total discounted costs arising from the operation of the system and the evaluation of the stored thermal energy in IES and GES at terminal time. These costs have been derived in the above subsection and are given by the  performance criterion $\mathcal{J}_0(x_0;\aprocess)$ in   \eqref{performance_discr}. Thus,  the optimization problem reads
\begin{align}
	\mathcal{V}_0(x_0)=\inf_{\aprocess \in \admiss} \mathcal{J}_0(x_0;\aprocess)
	\label{OP}
\end{align}	
with $ \admiss$ given in \eqref{admiss_control_D}. We call $\mathcal{V}_0(x_0)$ the \textit{value function} of the problem. It represents the minimum expected costs described by the performance criterion.

\subsection{Markov Decision Process}
\label{Sec-Discret-STOPT} 
To solve the optimal control problem \eqref{OP} we will apply the dynamic programming approach using MDP theory. This 
requires to embed \eqref{OP} into a family of optimization problems with variable initial time $n=0,\ldots,N$ and initial state $X_n=x\in\Statespace$. For each of theses problems we define the performance of an admissible control $\aprocess \in \admiss$ and the value function  as
\begin{align}
	J(n,x;\aprocess)& =
	\mathbb{E}_{n,x}\bigg[
	\sum_{k=n}^{N-1}  \runCDis(k,X_k^{\aprocess},\ud_k) +\termD(X_N^{\aprocess})	\bigg] \quad \text{and} \quad V(n,x)=\inf_{\aprocess \in \admiss} J(n,x;\aprocess).
	\label{performance_value_MDP}
\end{align}	
A control $\aprocess^*=(\ud_0^*,\ldots,\ud_{N-1}^*) \in \admiss$ is called optimal control if  $V(n,x)=J(n,x;\aprocess^*)$ for all $n=0,\ldots,N$ and $x\in\Statespace$. The associated decision rule $\amarkov^*=\amarkov^*(n,x)$ defining $\aprocess^*$ is called \textit{optimal decision rule}.

The control problem in \eqref{performance_value_MDP} is a MDP with finite time horizon $N$, an $\ell+3$-dimensional state process $X$ with dynamics given by the recursion  \eqref{state_recursion} which reads $X_{n+1}=  \mathcal{T}(n,X_n,\ud_n,\Noise_{n+1}), ~ X_0=X(0)=x_0, $  It is driven by driven by a sequence of independent three-dimensional standard normally distributed random vectors $(\Noise_n)_{n=1,\ldots,N}$ that appear in the recursion as additive noise. Thus the transition kernel of the MDP describing the conditional distribution of $X_{n+1}$ given $X_n$ and $\ud_n$, is given by a multivariate  Gaussian distribution.  The transition operator $\mathcal{T}$ is linear in the state variable, and the control takes values in the finite  action space  $\actionspace=\{\Spill, \Discharg,\Wait,\Charg,\Fuel\}$ and is subject to state-dependent control constraints described by the family of subsets $\feasible(n,x)\subset \actionspace $ given in \eqref{FeasibleActions}.

\paragraph{Dynamic programming equation}
Solving the control problem in \eqref{performance_value_MDP} is based on the Bellman principle which provides  the following necessary optimality condition called  Bellman equation, and constitutes a backward recursion for the value function and the the optimal decision rule.  
We refer to Bäuerle and Rieder \cite{bauerle2011markov},  Puterman \cite{puterman2014markov}, Hern{\'{a}}ndez-Lerma and Lasserre \cite{HernandezLerma1996}, and the references therein for more details in the MDP theory.
\begin{theorem}(Bellman equation)
	\label{theo-Bellman-equ}
	The value function $V$ satisfies  for all $ x \in \Statespace$
	\begin{align}
		\begin{split}
			V(N,x)& =\termD(x),    \\
			V(n,x)& = \min_{a \in  \feasible(n,x)}\Big\{\runCDis(n,x,a)+\mathbb{E} \big[V(n+1,\mathcal{T}(n,x,\ufix,\Noise_{n+1}))\big]\Big\}, \quad n= N-1,\ldots,0.
		\end{split}				
		\label{Bellman}	
	\end{align}
	The optimal decision rule $\amarkov^*$
	is given by the minimizer in \eqref{Bellman}, that is	
	\begin{align}
		\amarkov^*(n,x)=\underset{a \in  \feasible(n,x)}{\mathrm{argmin}}
		\Big\{\runCDis(n,x,a)+\mathbb{E} \big[V(n+1,\mathcal{T}(n,x,\ufix,\Noise_{n+1}))\big]\Big\}.
		\label{minimizer}
	\end{align}
\end{theorem}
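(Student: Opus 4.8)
The plan is to carry out the classical backward-induction argument of dynamic programming for a finite-horizon Markov decision process, as in Bäuerle–Rieder \cite{bauerle2011markov} or Hernández-Lerma–Lasserre \cite{HernandezLerma1996}, after checking that the present model has the structure these results require: by Proposition \ref{prop:state_recursion} the state evolves through the recursion \eqref{state_recursion} with a transition operator $\mathcal{T}$ that is affine in the state and driven by additive noise $\mathcal{B}_{n+1}$ which is independent of $\mathcal{F}_n$; the action space $\actionspace$ is finite; the constraint sets $\feasible(n,x)$ in \eqref{FeasibleActions} are non-empty (Assumption \ref{Ass-model-par}) and described by finitely many measurable inequalities; and, by Lemma \ref{lem:RunningCost-closed} together with \eqref{terminal_penalty}, the one-period and terminal costs $\runCDis(n,\cdot,\afix)$ and $\termD(\cdot)$ are continuous with at most linear growth, while the state process has finite moments of all orders under any admissible control (affine dynamics, Gaussian noise). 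The terminal identity $V(N,x)=\termD(x)$ is immediate, since the sum in \eqref{performance_value_MDP} is empty for $n=N$.

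\textbf{Measurability and integrability.} First I would establish, by backward induction on $n$, that $V(n,\cdot)$ is Borel measurable with at most linear growth, so that every conditional expectation below is finite and well defined. Given $V(n+1,\cdot)$ with these properties, the map $x\mapsto \mathbb{E}\big[V(n+1,\mathcal{T}(n,x,\afix,\mathcal{B}_{n+1}))\big]$ is again measurable with linear growth, because $\mathcal{T}(n,\cdot,\afix,b)$ is affine and $\mathcal{B}_{n+1}$ has finite moments of all orders; adding $\runCDis(n,\cdot,\afix)$ and taking the minimum over the finite set $\feasible(n,x)$ preserves both properties, using that $x\mapsto\feasible(n,x)$ is a measurable set-valued map. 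The same induction produces a Borel-measurable minimizing selector $\amarkov^*(n,x)\in\feasible(n,x)$ of the right-hand side of \eqref{Bellman}, the minimum being attained because $\feasible(n,x)$ is a non-empty finite set.

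\textbf{The two inequalities.} For the lower bound, fix $(n,x)$ and an admissible $\aprocess\in\admiss$ with decision rule $\amarkov$, put $\afix:=\amarkov(n,x)\in\feasible(n,x)$, and use the recursion \eqref{state_recursion}, the Markov property of $(X_k)$ under a feedback control, the independence of $\mathcal{B}_{n+1}$ from $\mathcal{F}_n$, and the tower property to split
\begin{align*}
	J(n,x;\aprocess)&=\runCDis(n,x,\afix)+\mathbb{E}_{n,x}\big[J(n+1,X_{n+1};\aprocess)\big]\\
	&\ge \runCDis(n,x,\afix)+\mathbb{E}\big[V(n+1,\mathcal{T}(n,x,\afix,\mathcal{B}_{n+1}))\big],
\end{align*}
where the inequality uses $J(n+1,\cdot;\aprocess)\ge V(n+1,\cdot)$; the last expression is bounded below by the minimum over $\feasible(n,x)$, and taking the infimum over $\aprocess\in\admiss$ yields ``$\ge$'' in \eqref{Bellman}. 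For the reverse inequality together with optimality, I would let $\aprocess^*$ be the Markov control generated by the selector $\amarkov^*$ (admissible since $\amarkov^*(n,x)\in\feasible(n,x)$ for all $n,x$) and prove $J(n,x;\aprocess^*)=V(n,x)$ by backward induction: at $n=N$ this is the terminal identity, and assuming $J(n+1,\cdot;\aprocess^*)=V(n+1,\cdot)$, the same decomposition with $\afix=\amarkov^*(n,x)$ and the defining property of $\amarkov^*$ give $J(n,x;\aprocess^*)=\min_{\afix\in\feasible(n,x)}\{\runCDis(n,x,\afix)+\mathbb{E}[V(n+1,\mathcal{T}(n,x,\afix,\mathcal{B}_{n+1}))]\}$. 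Combining this with ``$\ge$'' and with $V(n,x)\le J(n,x;\aprocess^*)$ forces equality throughout, which is \eqref{Bellman}, and simultaneously identifies $\amarkov^*$ as the argmin in \eqref{minimizer}.

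\textbf{Main obstacle.} The only part beyond routine tower-property bookkeeping is the preliminary measurability/integrability step: one must ensure that $V(n+1,\cdot)$ is an admissible integrand against the (degenerate) Gaussian transition kernel of $\mathcal{T}$ and that a measurable minimizing selector exists. Here the finiteness of $\actionspace$ and the explicit, piecewise-smooth description of the feasible sets in Subsect.~\ref{Sec-contol-Discre} do the work — with a continuous action set one would instead need a measurable-selection theorem and lower semicontinuity of the stage cost; in the present finite setting both come essentially for free.
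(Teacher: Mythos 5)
Your proof is correct and follows exactly the route the paper itself relies on: the paper gives no written proof of this theorem but instead points to the standard finite-horizon MDP backward-induction/verification argument of B\"auerle--Rieder, noting in Remark \ref{rem:verification} precisely the structural facts you verify (linear recursion with additive Gaussian noise, finite action space, bounded expectations, hence an attained minimum). Your write-up simply supplies the measurability/integrability bookkeeping and the two-inequality argument that the paper delegates to \cite{bauerle2011markov} and \cite[Chapter 6]{takam_PhD_2023}, and it does so correctly.
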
 
The dynamic programming equation \eqref{Bellman} can be  solved  by backward recursion starting at the terminal time $N$.  
Further details can be found in \cite[Chapter 6]{takam_PhD_2023}. 
\begin{remark}
	\label{rem:verification}
	Note that the controlled state process is defined by a  linear recursion with additive Gaussian noise. Further, the action space $\feasible$ is finite. Hence, the expectation defining the performance criterion of the finite horizon control problem \eqref{performance_value_MDP} is bounded for all admissible controls $\aprocess \in \admiss$,  and the infimum over $\aprocess$ is attained. Therefore the pointwise optimization problem in the Bellman equation \eqref{Bellman}	is formulated with the minimum.  Further, this also allows the application of a verification theorem as in Bäuerle and Rieder \cite[p. 21-23]{bauerle2011markov}, which ensures that the solution of the Bellman equation is indeed the value function and that $\amarkov^*$ in \eqref{minimizer} is optimal.
\end{remark}

\section{State Discretization}
\label{sec:StateDiscretization}
The challenge of the direct implementation of the backward recursion algorithm following from the Bellman equation in  Theorem \ref{theo-Bellman-equ} is that it becomes computationally intractable due to the curse of dimensionality. On the one hand, the dimension $d=\ell+3$  of the state space is already for moderate dimensions $\ell$ of the reduced-order system \eqref{reduced-S} quite high. Note that the choice of $\ell$  determines the quality of the approximation of aggregated characteristics such as $\QmAppr,\QoutAppr$ appearing in the transition operator and the control constraints. On the other hand, due to the state-dependent control constraints, no closed-form expressions for the expectation $\mathbb{E} \big[V(n+1,\mathcal{T}(n,x,\ufix,\Noise_{n+1}))\big]$, which appear in the Bellman equation \eqref{Bellman}, can be expected.	

To overcome these problems, we propose a computationally tractable approximation of these expectations based on a state discretization, which we describe below. 
For the sake of simplicity, we restrict ourselves to a model with a deterministic fuel price $F$, that is also used in our numerical experiments. Then the state of the control problem is reduced to $X=(R,P,Y^\top)^\top$ with values in the state space  
$\Statespace \subset \R^{\ell+2}$.
A generalization to the full model including $F$ as state is straightforward.

The idea is to divide the state space into a finite number of disjoint subsets, each represented by a single point, in which we want to compute approximations of the value function $V$ and the optimal decision rule $\amarkov$. This approach leads to an approximate MDP with a finite state space and a finite-state Markov chain as state process.  

The construction of the state space partition is based on an  $(\ell+2)$-dimensional grid. We choose for each of the $\ell+2$ state variables  $\dom=R,P, \RState^1,\ldots,\RState^\ell$ discretization parameters $N_\dom\in\N$, sets of indices $\mathcal{N}_{\dom}=\{0,...,N_\dom\}$, and define the points   $r_0< \ldots<r_{N_{R}} \in [\underline r,\underline r]$,  $p_0< \ldots<p_{N_P}^{}$   in $[\underline p,\underline p]$,   $y^k_{0}<\ldots<y^k_{N_{Y^k}}$   in $\R$, $k=1,\ldots,\ell$.
Then, the $(\ell+2)$-dimensional discretized state space is given by the grid
\begin{align} 
	\widehat{\Statespace}
	=\{r_0,\ldots,r_{N_R}^{}\} \times \{p_0,\ldots,p_{N_{P}}^{}\} \times \{y^1_0,\ldots,y^1_{N_{Y^1}}\}\times \ldots \times \{y^\ell_0,\ldots,y^\ell_{N_{Y^\ell}}\}.
\end{align}
Each point $x_m=(r_i,p_j,y^1_{k_1},\ldots,y^\ell_{k_\ell})^\prime \in \widehat{\Statespace}$ is identified  by  the multi-index $m=(i,j,k_1,\ldots,k_\ell)\in \widehat{\mathcal{N}}$ where 
$\widehat{\mathcal{N}}= \mathcal{N}_{R} \times \mathcal{N}_{P} \times\mathcal{N}_{Y^1} \times \ldots\times\mathcal{N}_{Y^\ell}$ is the set of $(\ell+2)$-tuples of multi-indices. 
\begin{figure}[h!]
	\centering
	\includegraphics[width=0.8\linewidth,height=0.5\linewidth]{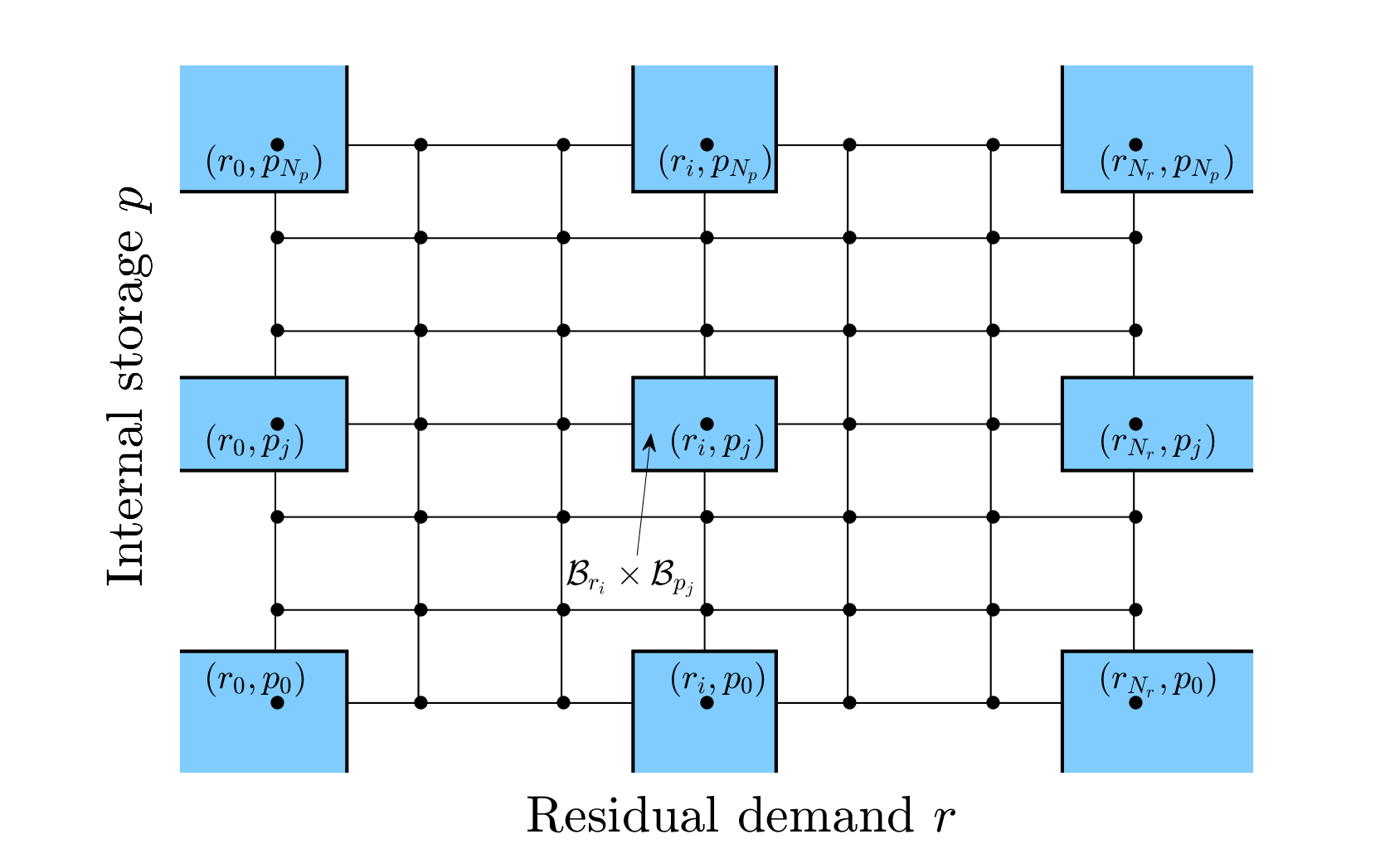}
	\caption{Projection of the  computational grid onto  $(r,p)$-plane $\Statespace_{RP}$  for fixed $\rstate$. }
	\label{Grid-Space}
\end{figure}

For the construction of the discretized state process denoted by $\widehat{X}=(\widehat{R},\widehat{P},\widehat{Y}^\prime)^\prime$ we map all values of the original state $X\in\Statespace$ coordinate-wise to the next grid point. Then, each grid point $x_m$ represents a rectangular neighborhood or box around  $x_m$. For grid points at the boundary these  neighborhoods collect all points of $\Statespace$ beyond the boundary. Figure \ref{Grid-Space} sketches the projection of the  computational grid onto the $(r,p)$-plane $\Statespace_{RP}$ for fixed $\rstate$. 

This approach can be formalized as follows.
We distinguish inner grid points with indices in $\{1,\ldots,N_\dom-1\}$, and boundary grid points  including the corners for which at least one index is $1$ or $N_\dom$, $\dom=R,P, \RState^1,\ldots,\RState^\ell$ . The neighborhoods of inner grid points $r_i,p_j$, and $y^k_{l^k}, k=1,\ldots, \ell$ are defined by 
$\bbox^R_{i}=\big(\frac{1}{2}(r_i+ r_{i-1}),\frac{1}{2}(r_i+ r_{i+1})\big]$,
$\bbox^P_{j}=\big(\frac{1}{2}(p_j+ p_{j-1}),\frac{1}{2}(p_j+ p_{j+1})\big]$,
$\bbox^{Y^k}_{{l_k}}=\big(\frac{1}{2}(y^k_{l^k} + y^k_{l^k-1}),\frac{1}{2}(y^k_{l^k}+ y^k_{l^k+1})\big]$,
for $i=1,\ldots,N_R-1, j=1,\ldots,N_P-1,  l^k=1,\ldots,N_{Y^k}-1, k=1,\ldots,\ell$.
For the boundary grid points, we extend the domain to the  respective boundaries of the original state space $\Statespace$  as follows
\begin{align}	
	\bbox^R_{0}&= \Big(-\infty\frac{1}{2}(r_{0}+ r_{1})\Big], & \bbox^P_{0}&=\Big( -\infty,\frac{1}{2}(p_{0}+ p_{1})\Big],
	&\bbox^{Y^k}_{0}&=\Big(-\infty,\frac{1}{2}(y^k_{0}+ y^k_{1})\Big], 
	\\
	\bbox^R_{N_R}&=\Big(\frac{1}{2}(r_{N_R}^{}+ r_{{N_R}-1}^{}), \infty\Big),& 
	\bbox^P_{N_P}&=\Big(\frac{1}{2}(p_{N_P}^{}+ p_{{N_P}-1}^{}),\infty\Big),& 
	\bbox^{Y^k}_{N_{Y^k}}&=\Big(\frac{1}{2}(y^k_{N_{Y^k}}+ y^k_{N_{Y^k}-1}),+\infty\Big).
\end{align}
The projection of the original state $X_n=(R_n,P_n,Y_n^\top)^\top\in \Statespace$ onto the discretized state   space $\widehat\Statespace$ results in   $\widehat{X}_n=(\widehat{R}_n,\widehat{P}_n,\widehat{Y}_n^\prime)^\prime$  defined  for $n=0,\ldots,N$ by
\begin{align}
	\label{state_projection}
	\widehat{R}_n = \sum_{i=0}^{N_R} r_i \one_{\bbox^R_i}(R_n),\quad 
	\widehat{P}_n = \sum_{j=0}^{N_P} p_j \one_{\bbox^P_j}(P_n),\quad 
	\widehat{Y}^k_n = \sum_{l=0}^{N_{Y^k}} y_{l}^k \one_{\bbox^{Y^k}_l}(Y^k_n),\quad k=1\ldots,\ell.
\end{align}

This discretization converts the given MDP with state space $\Statespace$ into a MDP for a controlled finite-state Markov chain with state space $\widehat{\Statespace}$. Combining the recursion \eqref{state_recursion} for the original state process with the projection formulas in 	\eqref{state_projection}, the dynamics of the the discrete-state approximation $(\widehat X_n)$ is given by the recursion 
\begin{align}
	\label{state_recursion_discrete}
	\widehat X_{n+1}= \widehat{\mathcal{T}} (n,\widehat X_n,\ud_n,\Noise_{n+1}), \quad \widehat X_0=\widehat x_0. 
\end{align} 
for $n=0,\ldots N-1$. Here,    $\widehat{\mathcal{T}}:\{0,\ldots,N-1\}\times \widehat{\Statespace}\times\actionspace\times \R^3 \to \widehat{\Statespace} $ 
is  the discrete-state transition operator. It is for $n\in \{0,\ldots,N-1\}$, $x=(r,p,y^\top)^\top\in\widehat{\Statespace},\afix\in \actionspace$, $\noise\in \R^3$  defined as 	$\widehat{\mathcal{T}}=(\widehat{\mathcal{T}}^R,\widehat{\mathcal{T}}^P,(\widehat{\mathcal{T}}^Y)^\top)^\top$ with 
\begin{align}
	\label{state_recursion2_discrete}
	\begin{split}
		\widehat{\mathcal{T}}^R(n,x,\afix,b)& = \sum_{i=0}^{N_R} r_i \one_{\bbox^R_i}(\mathcal{T}^R(n,x,\afix,b)),\quad
		\widehat{\mathcal{T}}^P(n,x,\afix,b) = \sum_{j=0}^{N_P} p_j \one_{\bbox^P_j}(\mathcal{T}^P(n,x,\afix,b)),\\
		\widehat{\mathcal{T}}^Y(n,x,\afix,b)&= \widetilde{\mathcal{T}}^Y(n,y,\afix) =\sum_{l=0}^{N_{Y^k}} y_{l}^k \one_{\bbox^{Y^k}_l}(\overline{\mathcal{T}}^Y(n,y,\afix)),\quad k=1\ldots,\ell,
	\end{split}
\end{align}
where $\mathcal{T}^R,\mathcal{T}^P, \overline{\mathcal{T}}^Y$ are given in \eqref{state_recursion2}, and $\widehat x_0$ is the projection of the initial state $x_0$ onto $\widehat{\mathcal{X}}$. Note that  the discrete-state approximation $(\widehat X_n)$ inherits the  Markov property from the original state process $(X_n)$. This allows to characterize the transition kernel of the approximate MDP, that describes the conditional distribution of $\widehat X_{n+1}$ given $\widehat X_n$ and $\ud_n$, by the transition probabilities of the Markov chain. They are   given for all multi-indices $m_1,m_2\in \widehat{\mathcal{N}}$ by 
\begin{align}
	\Ptrans^{\afix}_{m_1m_2}=\mathbb{P}(\widehat{X}_{n+1}^{\aprocess}=x_{m_2} ~\mid~\widehat{X}_n^{\aprocess}=x_{m_1},~\ud_n=\afix) = \mathbb{P}(\widehat{\mathcal{T}} (n,x_{m_1},\afix,\Noise_{n+1})=x_{m_2}),
\end{align}
which  are the probabilities that the state moves from $x_{m_1}$ at time $n$ to  $x_{m_2}$ at time $n+1$ under the action $\ud_n=\afix$. For the computation of these probabilities one can utilize that the conditional distribution of the pair  of projected state components  $(\widehat R_{n+1},\widehat P_{n+1})$, given the state $\widehat X_n$ and the decision rule $\ud_n$ at time $n$, follows from the bivariate Gaussian distribution  of the original state variables $(R_{n+1},P_{n+1})$. Further, the corresponding conditional distribution of $\widehat \RState_{n+1}$ is degenerate, since   $Y$ and thus also its projection $\widehat{Y}$ follows  deterministic dynamics.   To be more specific,	
suppose the two multi-indices are given for $\dom=1,2$ as $m_\dom=(i_\dom,j_\dom,l^1_\dom,\ldots,l^\ell_\dom)\in \widehat{\mathcal{N}} $, and denote by $y_{m_\dom}=(y^1_{l^1_\dom},\ldots,y^\ell_{l^\ell_\dom})^\prime$. Then it holds 

\begin{align*}
	\Ptrans^{\afix}_{m_1m_2}
	&=\mathbb{P}\big(\widehat{\mathcal{T}} (n,x_{m_1},\afix,\Noise_{n+1})
	= x_{m_2}=(r_{i_2},p_{j_2}, y^1_{l^1_2},\ldots,y^\ell_{l^\ell_2})^\prime\big) \\	
	&=\mathbb{P}\big((\widehat{\mathcal{T}}^R (n,x_{m_1},\afix,\Noise_{n+1}),
	\widehat{\mathcal{T}}^P (n,x_{m_1},\afix,\Noise_{n+1}))^\prime = (r_{i_2},p_{j_2})\big)~
	\mathbb{P}\big(\widetilde{\mathcal{T}}^Y(n,y_{m_1},\afix)=y_{m_2}\big)\\
	&=\mathbb{P}((\mathcal{T}^R(n,x_{m_1},\afix,\Noise_{n+1}), \mathcal{T}^P(n,x_{m_1},\afix,\Noise_{n+1})) \in \bbox_{r_{i_2}}\times \bbox_{p_{j_2}})~   
	\mathbb{P}\Big(\overline{\mathcal{T}}^Y(n,y_{m_1},\afix)\in \prod_{k=1}^\ell\bbox^{Y^k}_{l^k_2}\Big),
\end{align*}
For more details on the  computation of the first probability in the last line,  we refer to  \cite[Chapter 6]{takam_PhD_2023}.	
For the last probability, the deterministic dynamics of $\RState$ and therefore of $\widehat{\RState}$ yields  
\begin{align}
	\mathbb{P}\big(\overline{\mathcal{T}}^Y(n,y_{m_1},\afix)\in \prod_{k=1}^\ell\bbox^{Y^k}_{l^k_2}) =
	\prod_{k=1}^\ell	\one_{\bbox^{Y^k}_{l^k_2}}\Big(\overline{\mathcal{T}}_k^Y(n,y_{m_1},\afix)\Big). 
\end{align}
We are now able to construct approximations of the value function $V(n,x)$ and the optimal decision rule $\amarkov(n,x)$ of the original MDP in the points of the discretized state space $\widehat{\mathcal{X}}$. These approximations are denoted accordingly by $\widehat V(n,x)$ and $\amarkovdiscrete(n,x)$. They are obtained by solving  the Bellman equation for the approximate MDP.  Analogous to the result given in Theorem 	\ref{theo-Bellman-equ}, this leads to the following backward recursion for all grid points $x_m \in \widehat{\mathcal{X}}$ identified  by an multi-index $m\in \widehat{\mathcal{N}}$. 
\begin{align}
	\begin{split}
		\widehat V(N,x_m) &=\termD(x_m),    \\
		\widehat V (n,x_m) & =\min_{a \in  \feasible(n,x_m)} \Big\{\runCDis(n,x,a)+\mathbb{E} \big[\widehat V(n+1,\widehat{\mathcal{T}}(n,x_m,\ufix,\Noise_{n+1}))\big]\Big\}, \quad n=N-1,\ldots,0.
	\end{split}				
	\label{Bellman_discrete}	
\end{align}
The optimal decision rule $\amarkovdiscrete^*(n,x_m)$
is given by the minimizer in \eqref{Bellman_discrete}, and the expectation in \eqref{Bellman_discrete}  by ~
$	\mathbb{E} \big[\widehat V(n+1,\widehat{\mathcal{T}}(n,x_{m},\ufix,\Noise_{n+1}))\big] 
=\sum\limits_{x_{q} \in  \widehat{\Statespace}} \Ptrans^{\afix}_{m q}\widehat{V}(n+1,x_{q}).$


\section{Numerical Results}
\label{OPt-Num-rsult}
In this section, we present numerical results that illustrate the solution of the stochastic optimal control problem for the cost-optimal management of a residential heating system. In particular, we show approximations of the value function and the optimal decision rules obtained by solving the Bellman equation \eqref{Bellman_discrete} for the approximate MDP that we studied in Section \ref{sec:StateDiscretization}.

In order to keep the curse of dimensionality under control, we use for the approximation of the GES spatio-temporal temperature distribution $Q$  by a reduced-order system of ODEs of dimension $\ell=4$ and two output variables, representing the average temperature approximations in the storage medium $\QmAppr$ and the average temperature in the \phx fluid $\QfAppr$. The latter serves as approximation of the average outlet temperature $\QoutAppr$, which appears in the transition operator of the MDP. We consider a GES with one \phxk, for which we have found that the 4-dimensional reduced-order system already provides quite good approximations for the two aggregated characteristics  mentioned above. We also found  that the naive approximation $\QfAppr\approx\QoutAppr$ can be significantly improved using the following formula to reconstruct $\QoutAppr$ from $\QfAppr$
\begin{align}
	\QoutAppr(t)\approx f(\QfAppr(t),a)\quad  \text{with} \quad 
	f(q,a)&=\begin{cases}
		2q-\QinC & \afix=-1,\\
		q+\Delta_{HP}/2, &  \afix=+1,\\
		q & \text{otherwise}.
	\end{cases}
\end{align}
It results from the assumption of a perfect linear increase or decrease of the liquid temperature along the way through the \phx from the inlet to the outlet leading to the relation $\QfAppr=(\Qin+\QoutAppr)/2$ with the inlet temperatures $\Qin=\QinC$ and $\QoutAppr -\Delta_{HP}$ for $a=-1,+1$, and $, \Qin=\QoutAppr$ otherwise. Note that including the outlet temperature $\QoutAppr$ directly in the system output requires considerably  larger dimensions $\ell$ of the reduced-order system, which then prevents a computationally tractable solution to the optimization problem. 	

Our numerical experiments are based on a model with a constant and known fuel price $F(t)= f_0>0$. This is the typical case for small and medium sized heating systems operating with fixed tariffs for fuel purchase. This makes it possible to remove the fuel price $F$ from the state variables and reduce the state dimension by one, which in turn again helps to keep the curse of dimensionality under control.  We work with  a planning horizon of $T=3$ days, which is divided into $N=72$ periods of lenght $\Delta_N=1$ hour. Since we consider  a short-term simulation with a small horizon time, we   neglect discounting as well as  the seasonality effect of the residual demand and suppose $\mu_R(t)=\mu^R_0=\textrm{const}$.    

\begin{table}[h]
	\centering	
	{\footnotesize 
		\begin{tabular}[T]{|c|rl||c|rl|} \hline
			Parameters&Values& Units& Parameters&Values& Units\\
			\hline		
			$l_x,l_y,l_z$ &$10,1,10$&$m$ &$T$ &   $72$&$ h$\\
			$h_x,h_y$&$0.1,~~0.01$&$m$ & $N$ & $72$ &\\				
			$ h_P$ & $0.02$&$ m$ & $\Delta_N$ & $1$ &$h$ \\
			$n_P$& $1 $&& $\Pin,\Pout$&40,~30&$\Celsius$ \\\hline 
			$\rhom $ & $2000$ &$kg/m^3$&$\rhof $ &$998$ &$kg/m^3$\\
			$\cpm$ & $800$& $J/(kg\, K)$ & $\cpf=\cpw$&$4182$ & $J/(kg\, K)$\\
			$\kappam $ &$1.59$ & $   J/(s\,  m\,  K)$ & $\kappaf$ &$0.60$ & $   J/( \,m\,  K)$\\
			$ d^M $ & $9.9375 \times 10^{-7}$&$m^2/s$ & $ d^F $& $1.4376\times 10^{-7}$&$m^2/s$\\\hline
			$\vconst$ & $ 10^{-2}$&$m/s$ &   $\QinC$ & $40 $&$\Celsius$\\
			$\heattransfer$ & $10$&$   J/(s\, m^2~ K)$& $\HPspread$ &  $3 $&$K$ \\
			$Q_0$ &  $10 $  &$\Celsius$ & 	$\Qg$ & $15$&$ \Celsius$\\
			$  \sigma_R$  &$232.5$ & $   J/\sqrt{s^3}$&$  \mu_R^0$  &$-4.64 \times 10^3 $ & $   J/s$ \\  
			$\beta_R$  &$0.5$ & $1/h$&$ A^P$  &$9.096$ & $m^2$ \\ 
			$m^P$  &$4000$ & $kg$ & $\kappa^P$  &$12$ & $  J/(s\, m^2\, K)$ \\
			$m^Q$  &$200000$ & $kg$&  $\price_F$&$30$& $l/ h$\\
			$p_\text{ref}$  &$60$ & $\Celsius$&   $\price_{HP}$&3&\texteuro/ $K$\\
			$q_\text{ref}$  &$20$ & $\Celsius$ & $\price_{P}$&$5$& \texteuro/$h$\\
			$\Pamb$  &$20$ & $\Celsius$&  $ \price^P_\text{pen}, \price^Q_\text{pen}$&   $6.7, ~~0.45$& \texteuro/ $kWh$ \\
			$f_0$&$2.25$& \texteuro/$l$&$ \price^P_\text{liq}= \price^Q_\text{liq}$& $0$&\texteuro/ $kWh$\\
			$\epsilon$& $0.05$&&$\overline{p}$& $90$&$\Celsius$ \\
			$\gamma$  &$3.27 \times 10^{-6}$ &   $1/s$&$\underline{p}$& $30$&$\Celsius$ \\
			$L_C$&$ 1.66 \times 10^{3}$&$ J/(K\, s)$&$\underline{q}$& $10$&$\Celsius$ \\
			$L_F$&$7.436 \times 10^{4}$&$  J/s$&$\overline{q}$& $30$&$\Celsius$ \\
			$L_D$&$1.39 \times 10^{3}$&$   J/(K\, s)$&$\underline{r}$&$-16.7$&$ MJ$\\
			$\delta$&0&&$\overline{r}$&$13.4 $&$MJ$\\
			\hline
		\end{tabular}
	}
	\caption{Model and discretization parameters}		
	\label{tab:cap1}	
\end{table}

For the computation of approximations of the value function and the optimal decision rule, we use the backward recursion following from  Bellman equation \eqref{Bellman_discrete} for the approximate MDP. 
The state space is discretized with $N_R=8$, $N_P=11$, $ N_{Y^1}=N_{Y^2}=N_{Y^3}=4$, $N_{Y^4}=8$.  The finer discretization of the range of $Y^4$ is motivated by the fact, that we use for the four-dimensional state space $Y$ a coordinate system which is such that $\OutputM \RState= cY^4$ for some constant $c\in \R$. Thus, $Y^4$ is  proportional  to the  average temperature in the storage medium $\QmAppr \OutputM \RState$. Recall, the latter plays a crucial role in the construction of the set  of feasible controls $	\feasible^Y(n,x)$, see \eqref{control_constr_Y}.
The GES storage medium is selected as dry soil, the \phx fluid is water, the \phx height is $  h_P=0.02\; m$.  The GES is a  cuboid with edge lengths $\l_x=10\; m,  l_y =1\;m, l_z = 10 \;m$, the spatial discretization of the heat equation  on the two-dimensional cross-section uses step sizes  $h_x=0.1\;m, h_y=0.01\; m$. The heat pump spread is set to $3K$, as it is often observed in reality. A penalty is chosen for the final cost. It is incurred if the IES and GES are not filled properly. No liquidation revenues are realized for leftover energy. 
The other model parameters are given in Table~\ref{tab:cap1}.

In the following we present results for full and empty storage, characterized by $\QmAppr_N=\overline{q}=30\Celsius$ and $\QmAppr_N=\underline{q}=10\Celsius$, respectively. Furthermore, results are shown for an intermediate grid point for the reduced order state $\Rstate$ which corresponds to the  average  temperature $\OutputM \Rstate=18\Celsius$ of the GES medium,  and is close to the midpoint $(\underline{q}+\overline{q})/2$ and therefore  referred to as ``half full GES''.

\paragraph{Terminal cost function}
The left panel of Fig.~\ref{VfT72_emptyY} shows the terminal cost function $\termC(x)$ as a function  of $(\widetilde{r},p)$.  for an empty, half full   and full GES. Here, $\widetilde{r}=\mu_R^0 +r$ denotes the residual demand including the seasonality component. The right panel of shows the dependence of the terminal costs on the storage level in IES and GES at terminal time, that is on $P_N=p$ and $\QmAppr_N= \overline q^{\medium} = \OutputM y$.
\begin{figure}[h!]
	\centering
	\includegraphics[width=0.49\linewidth,height=0.3\linewidth]{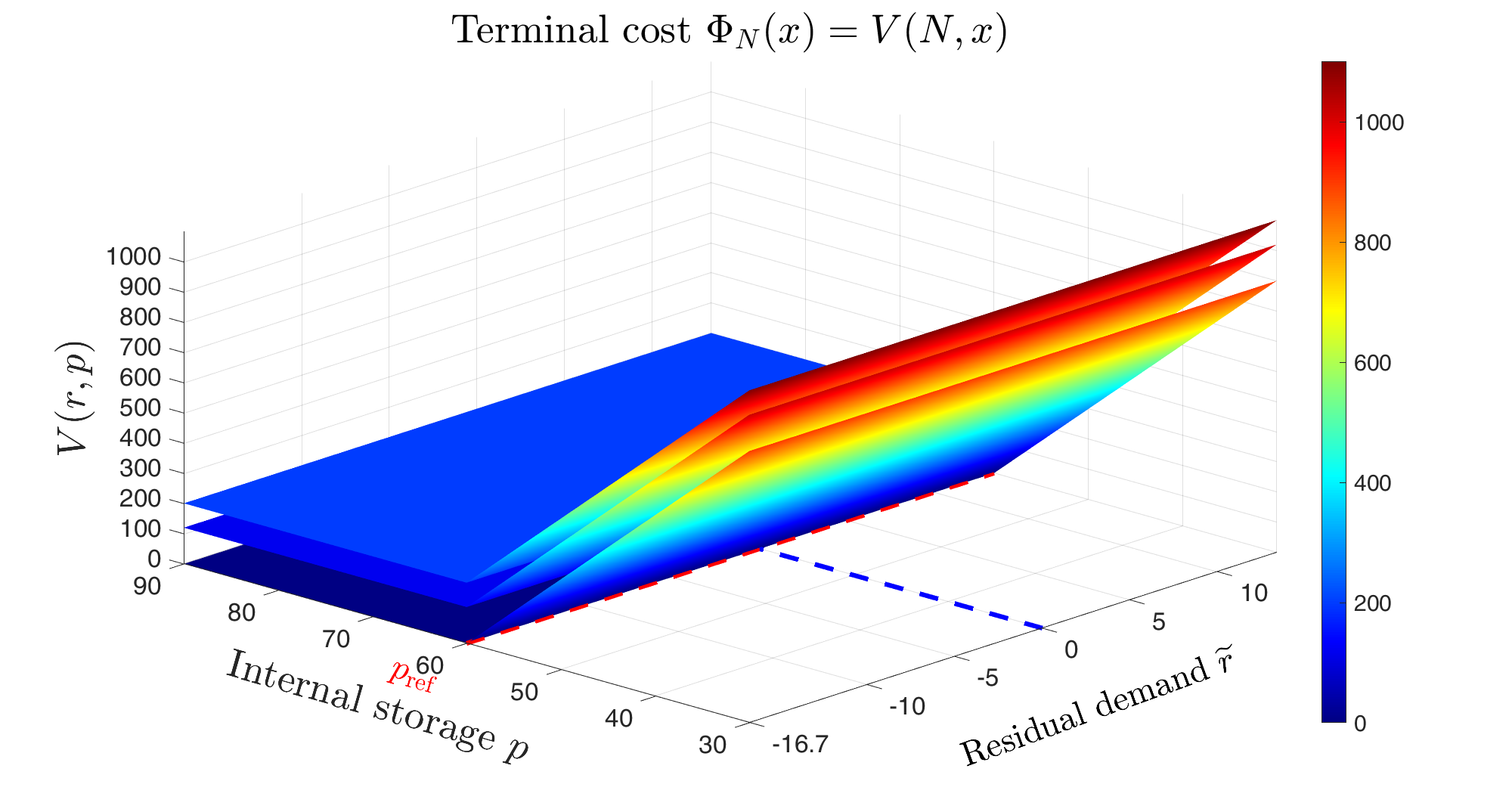}
	\includegraphics[width=0.49\linewidth,height=0.3\linewidth]{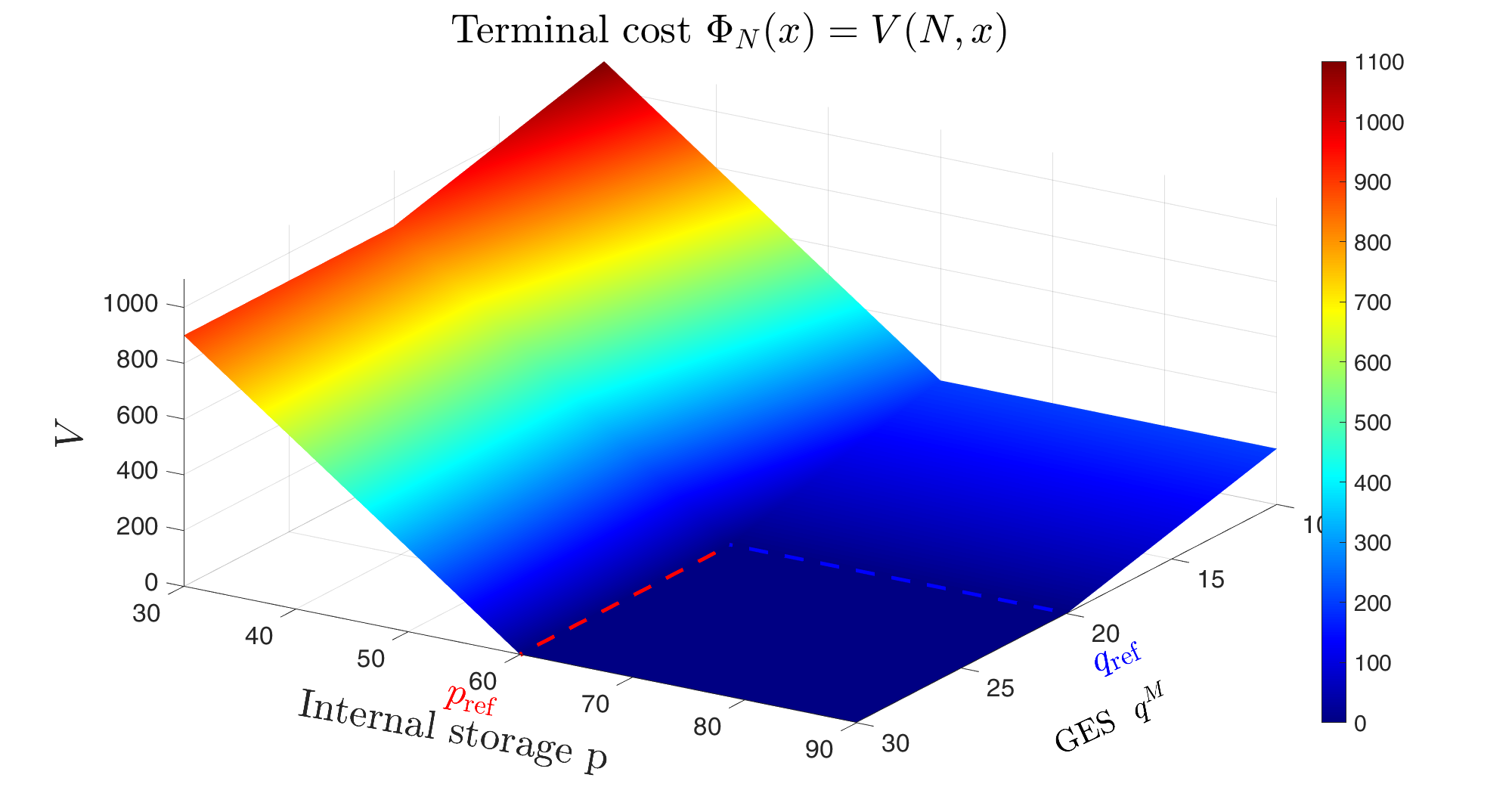}
	\caption[Terminal cost function]
	{Terminal cost function $\termD(x)=V(N,x)$.\\ Left: depending on $(\widetilde{r},p)$ for  empty,  half full  and   full  GES (upper, middle, lower graph).\\Right: depending on $p$ and average GES temperature $\OutputM y$.}		
	\label{VfT72_emptyY}
\end{figure}
This figure shows that the terminal cost function is zero when the average IES and GES temperature are both above the threshold ($P_N\geq p_\text{{ref}}=60~\Celsius$ and $\QmAppr_N \geq q_\text{{ref}}= 20~\Celsius$). However, it begins to increase linearly when these temperatures fall below their respective threshold values.

\paragraph{Value function and optimal decision rule at time $\mathbf{n=N-1}$}
Fig.~\ref{Vf71_RPY30} shows results at the beginning of the last period starting at time $T-\Delta t = 71 h$.
The top left panel depicts the approximations of the value function $V(n,x)$ as a function of $(\widetilde{r},p)$ for a full GES,  a half full GES, and an empty GES. 
The other panels  show the approximate optimal decision rules $\amarkov(n,x)$ as  functions of $(\widetilde{r},p)$,  for a full GES,  a half full GES, and an empty GES in  separate panels.

\begin{figure}[h!]
	\centering
	\includegraphics[width=0.49\linewidth,height=0.32\linewidth]{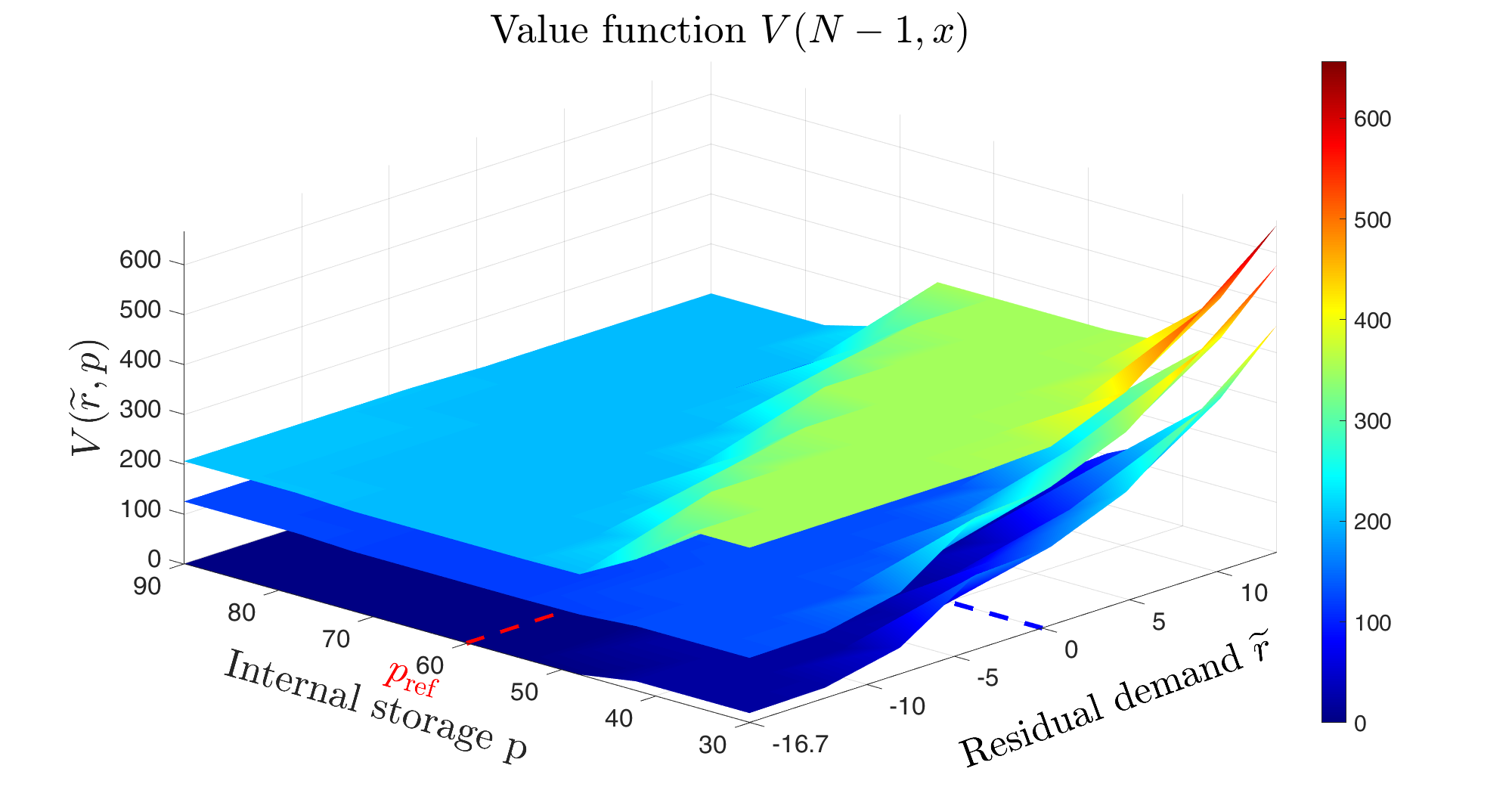}
	\includegraphics[width=0.49\linewidth,height=0.32\linewidth]{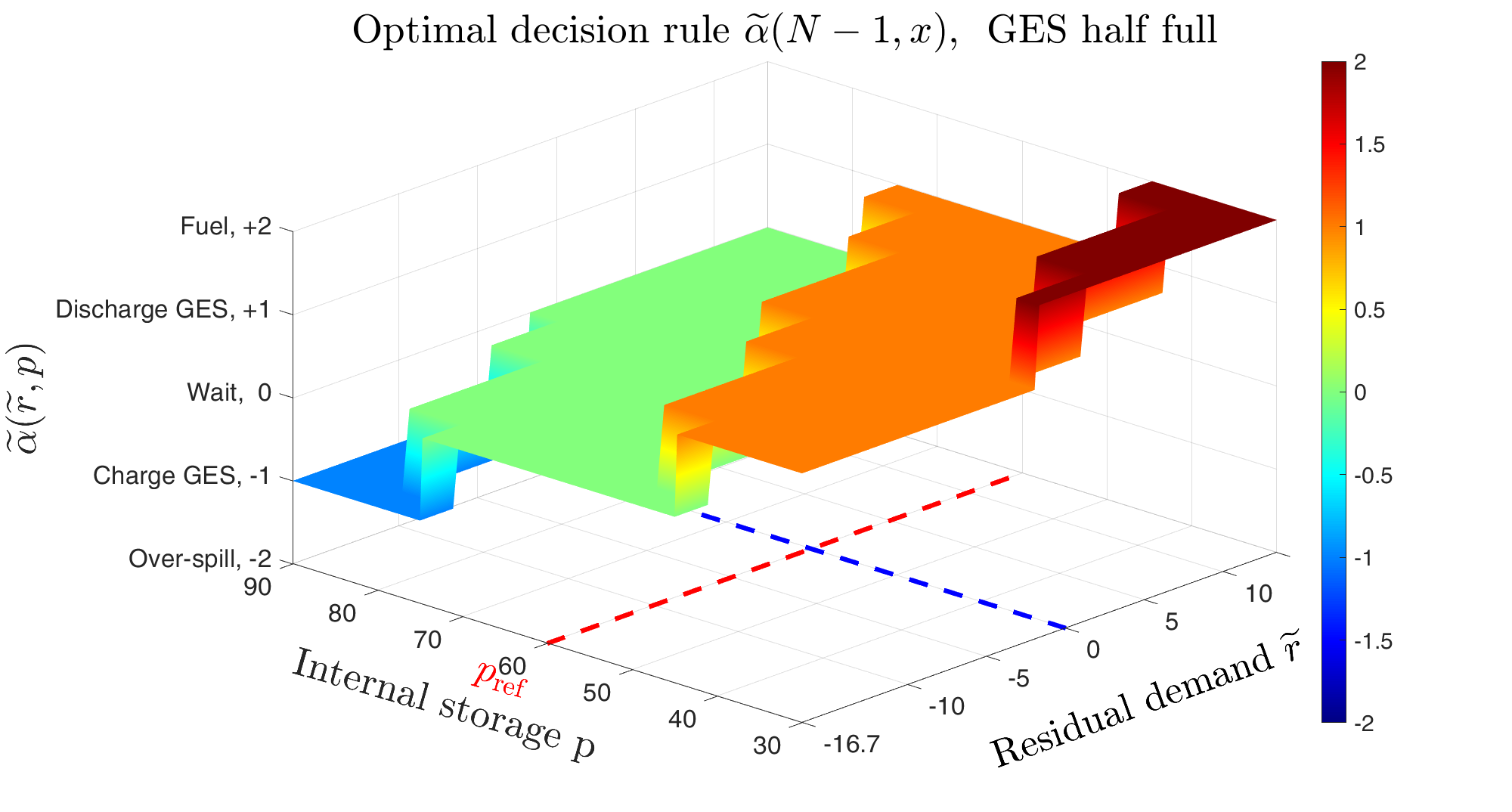}
	\includegraphics[width=0.49\linewidth,height=0.32\linewidth]{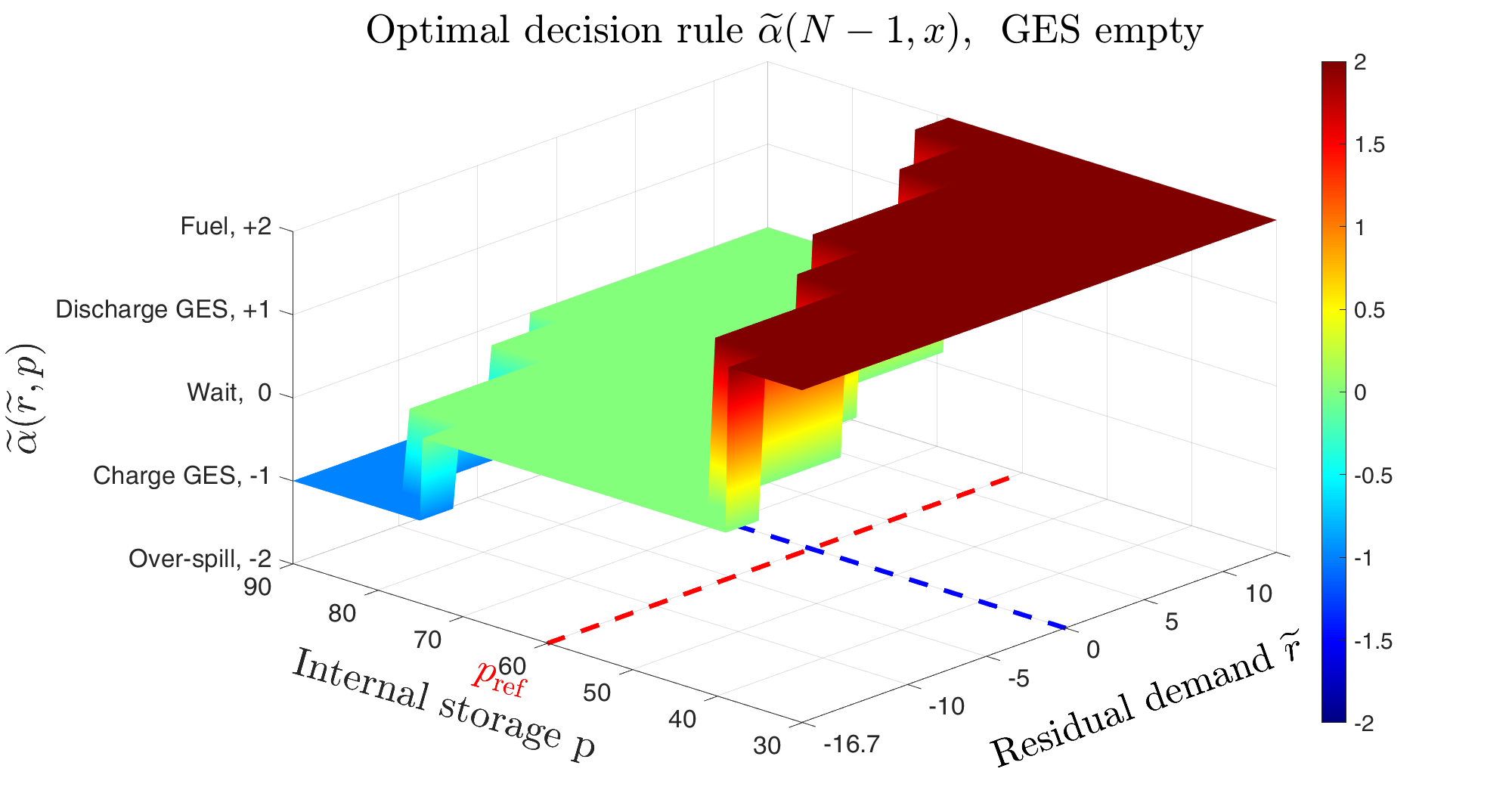}
	\includegraphics[width=0.49\linewidth,height=0.32\linewidth]{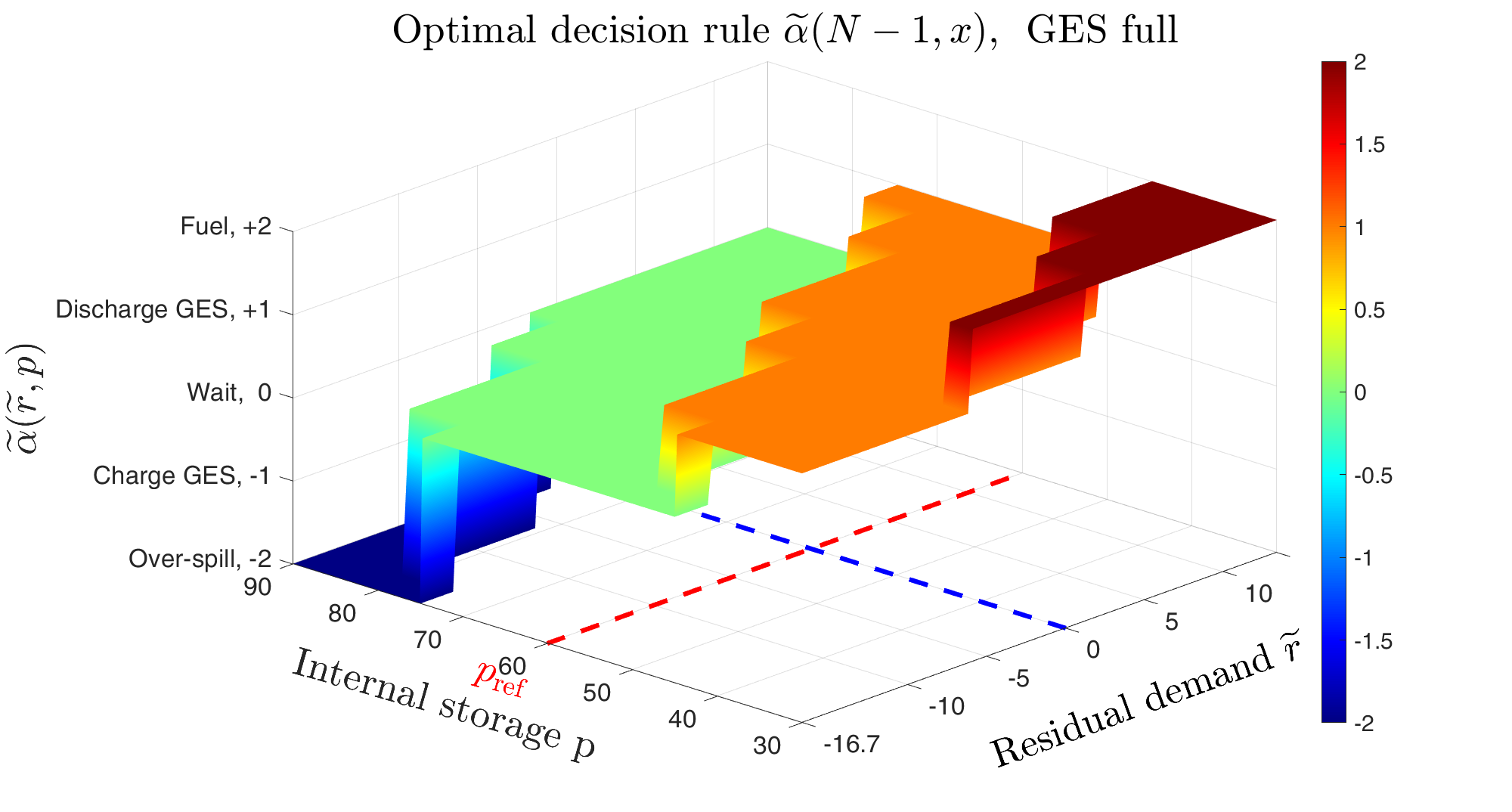}
	\caption[Value function and optimal decision rule at time $N-1$]{
		Value function $V(n,x)$ and optimal decision rule $\amarkov^*(n,x)$ at time $n=N-1$ as a function of $(\widetilde{r},p)$ for different GES filling levels. 
		\\ Top left: Value function for  empty,  half full  and   full  GES (upper, middle, lower graph).\\
		Bottom left, top right, bottom right: Optimal decision rules for  empty,  half full  and   full  GES.}
	\label{Vf71_RPY30}
\end{figure}

For $n=N-1$ the Bellman equation \eqref{Bellman_discrete} yields for the value function 
$\widehat V (N-1,x) =\runCDis(n,x,\afix^*)+\mathbb{E} \big[ \termD(\widehat{\mathcal{T}}(N-1,x,\afix^*,\Noise_{n+1}))\big]$  where $\afix^*$ denotes the optimal action. This decomposition into the running costs in the last period and the expected terminal costs helps to  explain the properties of the value function. 	
It can be observed that the value function is almost constant when  the IES temperature exceeds the penalty threshold $p_\text{ref}=60\,\Celsius$, and the residual demand is negative (overproduction). 
In that case,  it is optimal to wait ($\afix=0$), to transfer thermal energy to the GES ($\afix=-1$), or even to  apply over-spilling when the IES is almost full and there is a large negative residual demand, which indicates a strong overproduction of heat. For these controls  no, or only very small further running costs are incurred in the last period. Therefore, the  value function  represents the expected terminal penalties to be paid if the average GES temperature is below $q_\text{ref}$. The latter does not depend on $(\widetilde{r},p)$, but only on the reduced-order state $y$ which explains the different constant levels of the the value function.

\begin{figure}[ht]
	\centering
	\includegraphics[width=0.49\linewidth,height=0.32\linewidth]{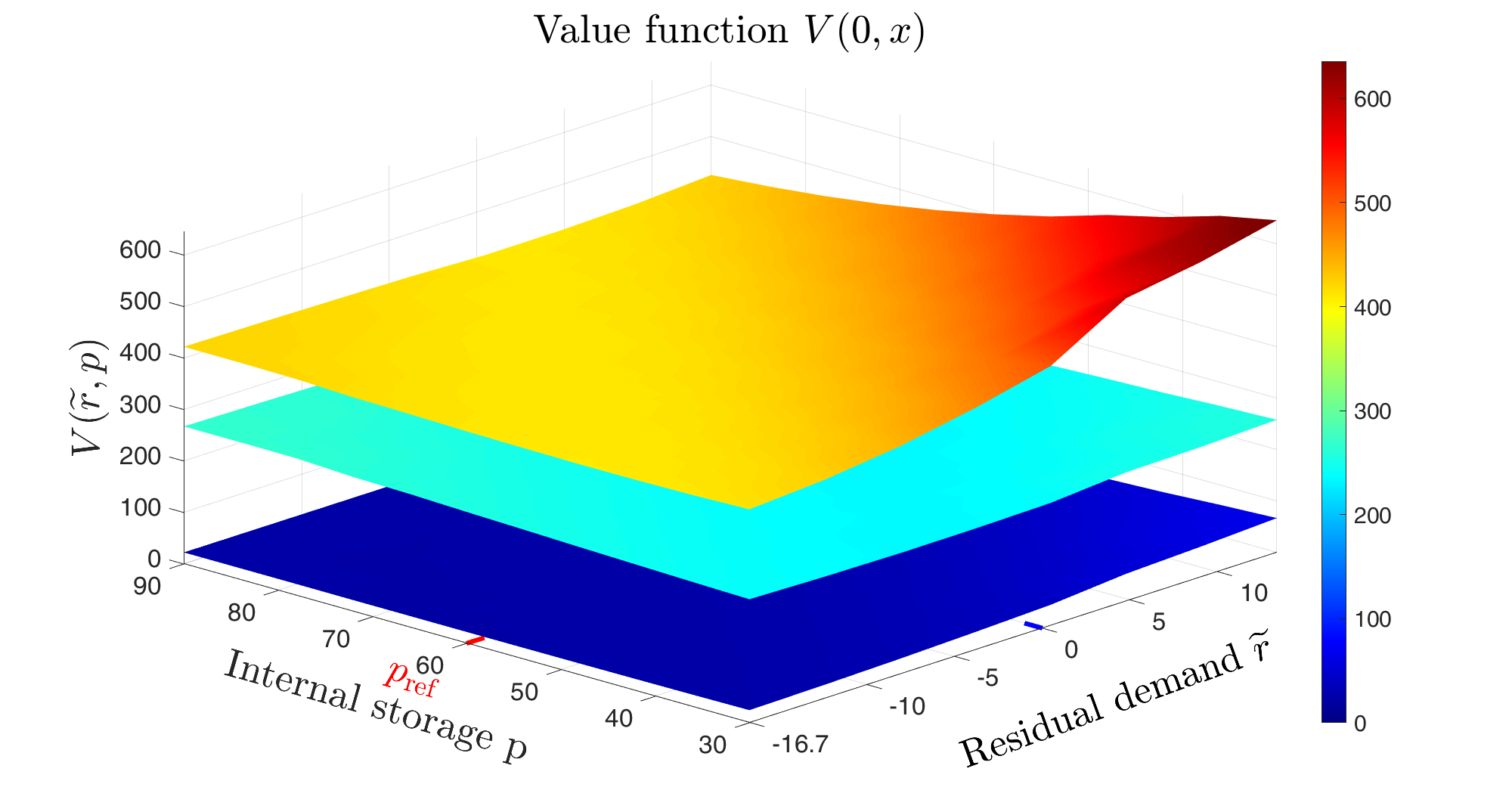}
	\includegraphics[width=0.49\linewidth,height=0.32\linewidth]{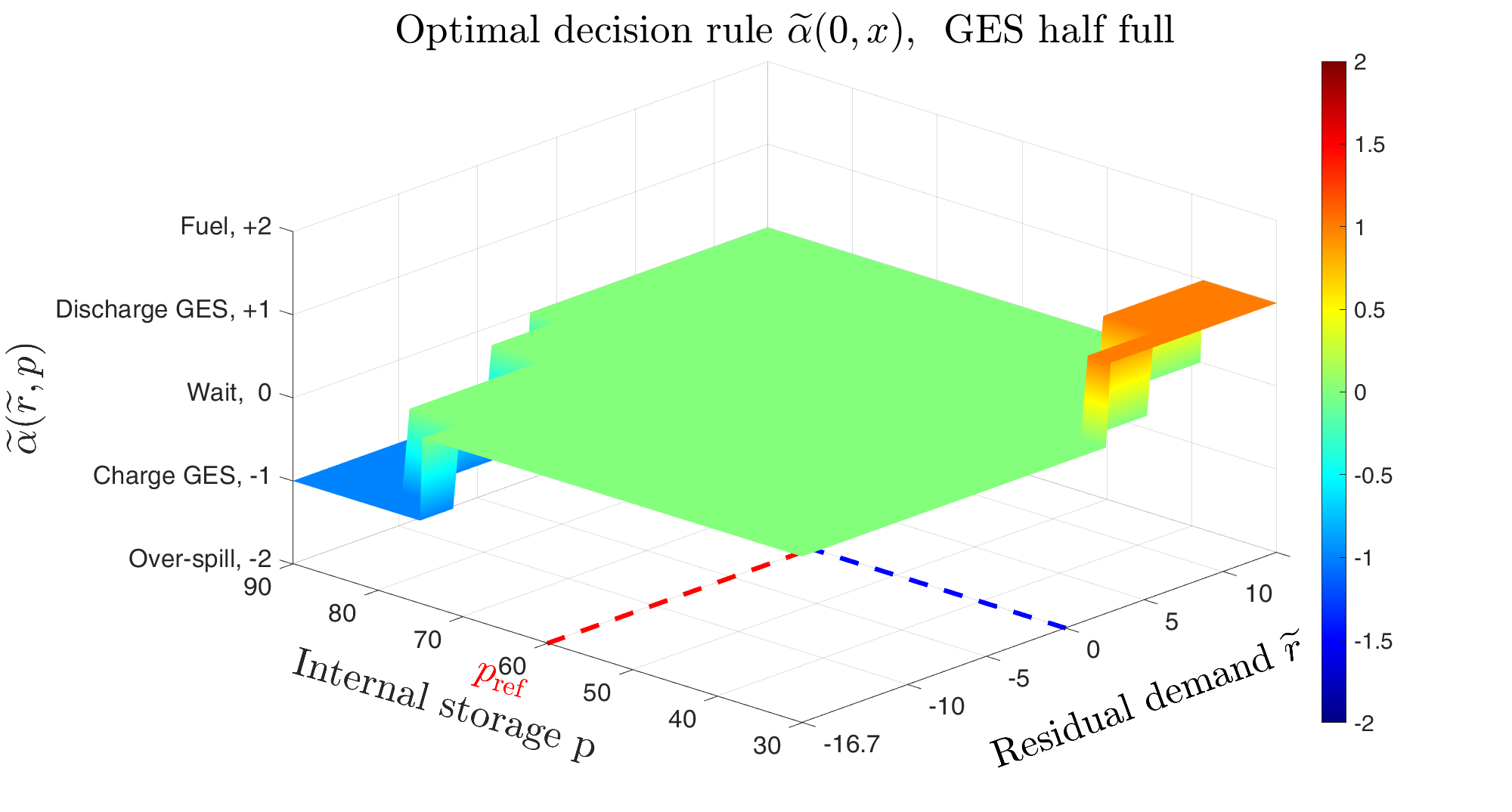}
	\includegraphics[width=0.49\linewidth,height=0.32\linewidth]{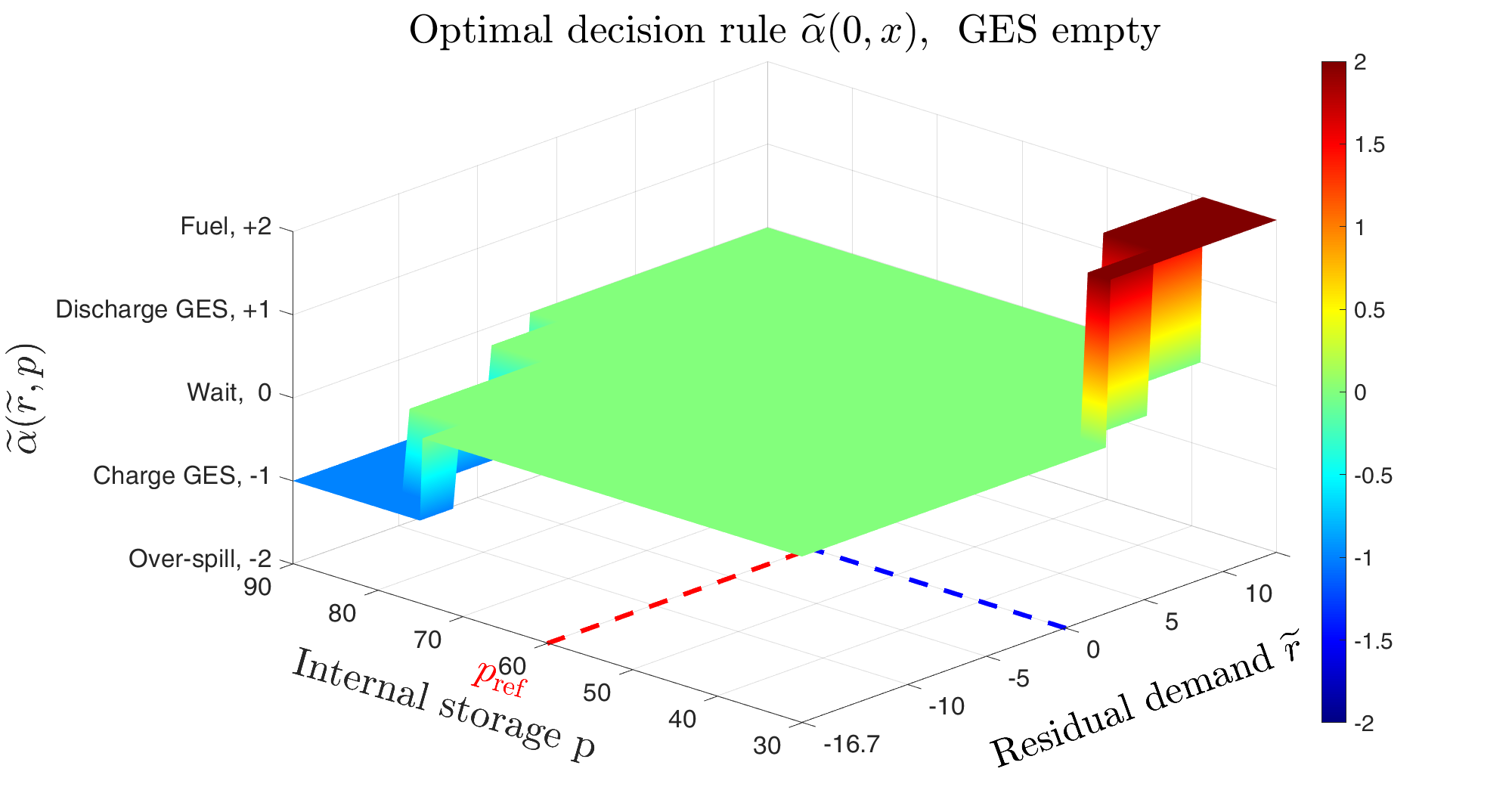}
	\includegraphics[width=0.49\linewidth,height=0.32\linewidth]{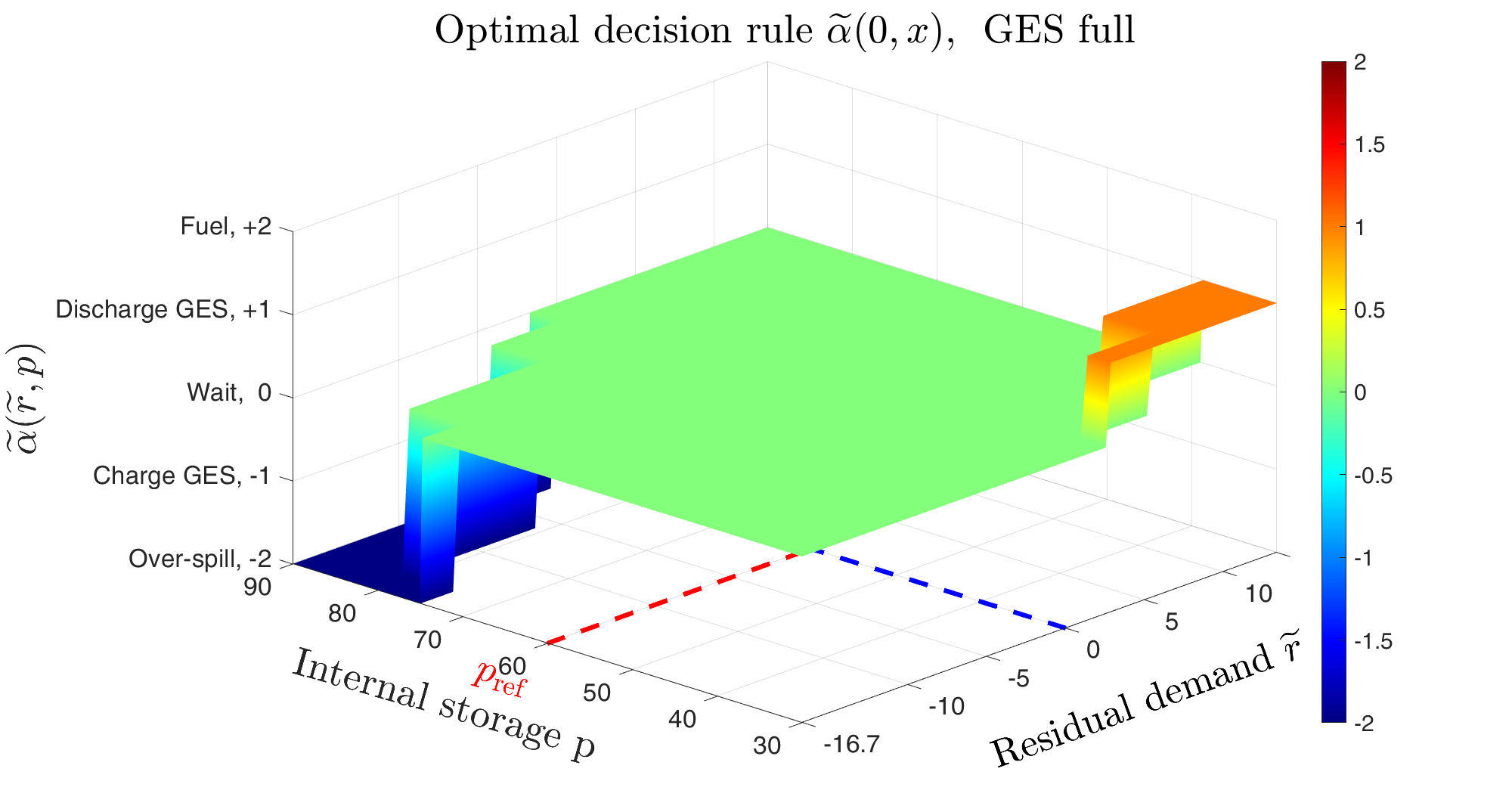}
	\caption[Value function and optimal decision rule at initial time $n=0$]{ Value function $V(n,x)$ and optimal decision rule $\amarkov^*(n,x)$ at initial time $n=0$ as a function of $(\widetilde{r},p)$ for different GES filling levels. 
		\\ Top left: Value function for  empty,  half full  and   full  GES (upper, middle, lower graph).\\
		Bottom left, top right, bottom right: Optimal decision rules  for empty,  half full  and   full  GES.}
	\label{Vf0_RPY30}
\end{figure}
When the residual demand becomes positive and when the IES temperature falls below $p_\text{ref}$ then it is optimal to charge the IES in order to avoid penalty payments at the end of the period if the IES temperature remains below $p_\text{ref}$.	
Considering the chosen parameters for the operating costs, burning fuel ($\afix=+2$) to charge the IES is more expensive than transferring heat from the GES to the IES with the heat pump ($\afix=+1$). Therefore, as expected, it is optimal to use the heat pump to charge the IES when its temperature is moderately  low,  especially when the residual demand is positive, which means that the heat supply does not cover the demand. Only when the IES is almost completely empty or the residual demand is very high, it is optimal to choose the expensive option of fuel firing. This also explains that the value function increases with decreasing IES temperature $p$ and increasing residual demand $\widetilde{r}$. The strong growth of the value function with strong unsatisfied demand and almost empty IES results from the penalties that must be paid if the IES temperature cannot be raised to $p_\text{ref}$ within the last period. Note that when the GES is empty then firing fuel is the only available option to charge the IES, as can be seen in the top right-hand panel.

\paragraph{Value function and optimal decision rule at initial time $\mathbf{n=0}$}
Fig.~\ref{Vf0_RPY30} shows the approximate value function and optimal decision rules  as functions of $(\widetilde{r},p)$ at the initial time $t=0~h$ for a full GES,  a half full GES, and an empty GES. In contrast to the results for $n=N-1$, the value functions shown in the top left-hand box now take on larger values for IES temperatures above $p_\text{ref}$, as there are $72$ periods ahead in which there is a potential imbalance between supply and demand and therefore ongoing costs could be incurred.  On the other hand, the maximum values which are attained for very small IES temperatures are smaller now, since penalities at terminal time can be avoided by an active storage management.
The increase with decreasing IES temperature $p$ is much less pronounced and visible only for positive residual demand.
In contrast, the value function increases visibly with increasing residual demand $\widetilde{r}$, especially for very high unsatisfied demand and low IES temperatures. This is due to the fact that in this case the transfer of heat from the GES to the IES with the help of the heat pump or even by firing  fuel in the next, or one of the next periods, is optimal and causes corresponding running costs.  The optimal decision rules show that it is only not optimal to wait when the IES is almost full or almost empty. In these cases, heat exchange with the GES is optimal unless the GES is full or empty. With a full GES, the only feasible and therefore optimal action is over-spilling, while with an empty GES, firing  fuel is the only option.

\paragraph{Optimal Paths of the State Process}
\begin{figure}[h!]
	\centering
	\includegraphics[width=0.95\linewidth,height=0.40\linewidth]{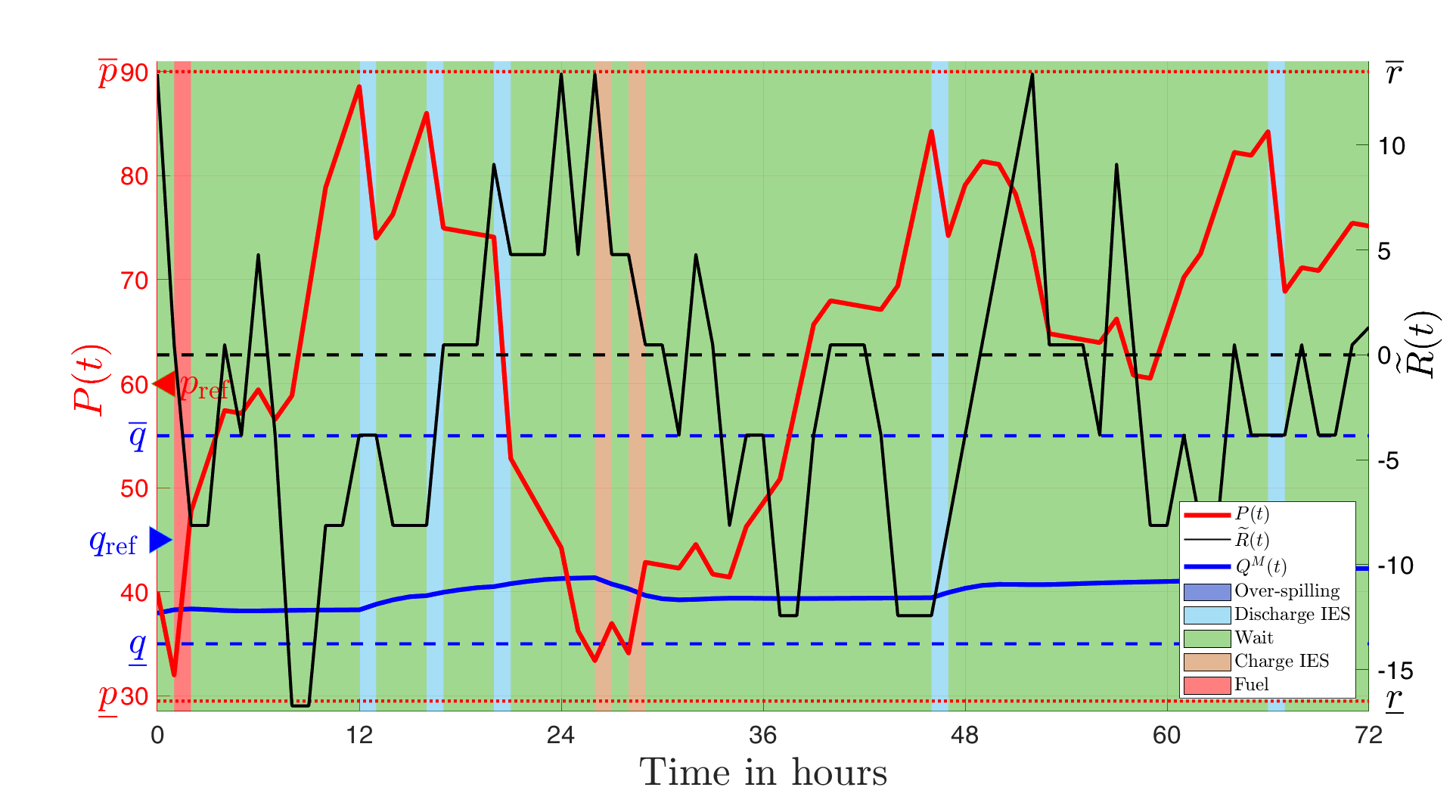}
	\caption[Paths of state variables for almost empty IES and GES]{  Optimal paths of IES temperature $P$ (red), GES average temperature $\QmAppr$ (blue) for initially almost empty IES $(P_0=40~\Celsius)$ and GES $(\QmAppr_0=12~\Celsius)$. 
		Red dashed lines at $\underline{p}=30\Celsius$ and $\overline{p}=90\Celsius$ represent state constraints for $P$, as well as the truncation values for the residual demand $\widetilde{R}$ at $\mu_R^0+\underline{r}=-16.7$ and $\mu_R^0+\overline{r}=13.4$. The path of $\widetilde{R}$ (black) starts at $\widetilde{R}(0)=\overline{r}$. The zero level of $\widetilde{R}$ is shown by a black dashed line. 		
		Blue dashed lines depict the state constraint $\QmAppr\in [\underline{q}, \overline{q}]$.  
		The background colors represent the optimal controls as shown in the legend.
	}
	\label{AvTFullP_emptyY}
\end{figure}
Fig.~\ref{AvTFullP_emptyY}  shows optimal paths of the average temperatures in the IES and GES together with the residual demand when we start with  almost empty IES and  GES.  Then in the first periods  it is  optimal to wait when there is overproduction $(\widetilde{R}<0)$ and to fire fuel when there is unsatisfied demand $(\widetilde{R}>0)$.  After the initial phase, with an almost empty IES and for small positive residual demand, it is optimal to discharge the GES and do not fire fuel until it is not almost  empty. When the IES is almost full and the residual demand is negative, it is optimal to transfer heat from the IES to the GES.

\section{Summary and Outlook}
\label{sec:summary}
We have investigated a stochastic optimal control problem for the cost-optimal management of a residential heating system  under stochastic supply-demand imbalance and stochastic energy prices.  As a special feature, the heating system is equipped with a geothermal storage  with a large capacity, whose response to charging and discharging decisions, however, depends on the spatial temperature distribution in the storage. The  dynamics of the  latter is described by a heat equation with a convection term that models the fluid movement in the \phx.  This leads to a challenging, non-standard mathematical optimization problem.  We first performed a semi-discretization of the heat equation and used model reduction techniques to reduce the dimension of the associated  high-dimensional system of ODEs. Finally, a time discretization leads to a MDP, which was finally approximated by an MDP with a discretized state space. In this way, the curse of dimensionality  is  overcome and the optimal control problem is solved numerically with a reasonable computational effort. 

We are currently working on more realistic mathematical models of the geothermal storage taking into account  the three-dimensional spatial temperature distribution in the storage and a refined topology of the \phxsk.     
However, this will then lead to considerably higher dimensions of the state space of the control problem, and require more efficient approximation techniques to solve the control problem, such as optimal quantization, see Pages et al.~\cite{pages2004optimal}  and reinforcement learning methods   such  as Q-learning, see Powell \cite{powell2007approximate} and Sutton and Barto \cite{sutton2018reinforcement}.  For some  first results in this direction, we refer to 
Pilling et al.~\cite{Pilling_arxiv2024}.

\bigskip
\begin{small}
	\smallskip\noindent\textbf{Acknowledgments~}
	The authors thank    Martin Redmann (Martin-Luther University Halle--Wittenberg),  Olivier Menoukeu Pamen (University of Liverpool), Gerd Wachsmuth,   Eric Pilling and Andreas Künnemann  (BTU Cottbus--Senftenberg)
	for valuable discussions that improved this paper.

	\smallskip\noindent
	\textbf{Funding~} P.H.~Takam gratefully acknowledges the  support by the German Academic Exchange Service (DAAD), award number 57417894.	
	R.~Wunderlich gratefully acknowledges the  support by the Federal Ministry of Education and Research (BMBF),  award number	05M2022.

\end{small}

%
\bibliographystyle{acm}  

\appendix
\small
\section{List of Notations}
\label{Notations}
\renewcommand{\arraystretch}{0.8}	

\vspace*{-4ex}
\begin{longtable}{p{0.3\textwidth}p{0.68\textwidth}l}		
	$t,T$ & time,  horizon time &\\
	$R, \widetilde{R}$ & residual demand: deseaonalized/including seasonality\\
	$F, \widetilde{F}$ & fuel price: deseaonalized/including seasonality\\
	$P$ &average  temperature in the IES&\\				
	$Q=Q(t,x,y)$ & temperature in the GES &\\	
	$l_x$,~$l_y,l_z$ &width, height and depth of the storage &\\
	$h_P$  & \phx height &\\
	$\mathcal{D} =(0, l_x) \times (0,l_y)$ &two-dimensional GES domain  &\\
	$\Dm, ~\Df, \DInterface$ &   domain  of storage medium,  \phx fluid, interface in between  &\\		
	$\partial \mathcal{D}$ &boundary of GES domain &\\
	$\partial \Din$,~$\partial \Dout$ & inlet and outlet boundaries of \phx&\\
	$\partial \Dleft, \partial \Dright, \partial \Dtop$,~$\partial \Dbottom$  & left, right, top and bottom boundaries of the domain&\\		
	$v=v_0(t)(v^x, v^y)^{\top}$ & time-dependent velocity vector&\\
	$\vconst$ &  constant velocity during pumping&\\
	$\cpm,\cpf,\cpw$ & specific heat capacity   of the storage fluid, \phx fluid, IES fluid &\\
	$\rhof$,~$\rhom$ & mass density of the fluid and medium&\\
	$\kappaf$,~$\kappam$ &thermal conductivity of the fluid and medium&\\
	$\af$,~$\am$& thermal diffusivity of the fluid and medium&\\	
	%
	$\heattransfer$ & heat transfer coefficient between storage and  underground &\\
	$Q_0, \Qg$ &initial and underground temperature distribution of the GES &\\
	$\QinC$ &inlet    temperature of the \phx  during charging &\\
	$\Qm, \Qf, \Qout$ & average temperature in  GES storage medium, \phx fluid, GES outlet&\\
	$\QmAppr, \QfAppr, \QoutAppr$ &  respective  approximations from reduced-order system&\\
	$\HPspread$ &  heat pump spread &\\				
	$ \Pin,\Pout,\Pamb$ & IES   inflow, outflow and ambient temperature  &\\
	$\gamma$&rate of IES heat loss to the environment&\\	
	$\drift_{\dom}$ & drift coefficient $\dom=P,Y$\\
	$\mat{A}, \mat{B}, \mat{\omatrix}$ & $ n \times n$  system matrix, $n \times m$  input matrix, $\nO \times  n$ output matrix & \\
	$Y, Z$&state and output    of reduced-order system&\\
	$\dimred, \nO$& dimension of state $Y$ and  output $Z$ \\
	$g$& input variable of the system&\\						
	$\mathbb{P}, \mathbb{E}$ & probability measure,  expectation  &\\
	$W_{R},W_{F}$ & Wiener processes&\\
	$\mu_{R/F}$, $\beta_{R/F}$, $\sigma_{R/F}$ & mean reversion level, speed and volatility for residual demand/ fuel &\\		 
	$N, \Delta_N$& number of time intervals, time step size&\\ 		
	$\Statespace,~\widehat{\Statespace}$ &state space of  continuous and  discretetized  state &\\
	$X=X(t)=(R,F,P,\RState^\top)^\top$ &continuous-time state process at time $t$&\\
	$X_n=(R_n,F_n,P_n,\RState_n^\top)^\top  $&  discrete-time state process at  time $t_n$&\\
	$\widehat{X}_n=(\widehat{R}_n,\widehat{F}_n,\widehat{P}_n,\widehat{\RState}_n^\top)^\top  $& discrete-time state process on discretized state space &\\	
	$\Noise=(\Noise_n)_{n=1,\ldots,N}$ &sequence of independent Gaussian  random  vectors&\\
	$m_{\dom}(n),~\Sigma^2_{\dom}(n)$ & conditional mean and variance of the random variables $\dom_{n+1}$, with $\dom=R,F$&\\
	$\Sigma_{P}^2(n,\afix), \rho(n,\afix)$&conditional ovariance,  correlation coefficient of $R_{n+1}$ and $P_{n+1}$&\\
	$\mathcal{T}=(\mathcal{T}^R,\mathcal{T}^F,\mathcal{T}^P,(\mathcal{T}^Y)^\top)^\top$&transition operator&\\			 		
	$\uprocess=(\uprocess(t))_{t\in[0,T]}$&   continuous-time  control process&\\
	$\feasible(n,x),  \feasible_P(n,x), \feasible_Y(n,x)$& state-dependent  set of feasible actions 	&\\					
	$\aprocess=(\ud_1,\ldots,\ud_{N-1}), \amarkov(n,x)$&  discrete-time  control process, decision rule&\\
	$\admiss$ & set of admissible controls &\\
	$\price_\dom$ & price, price rates \\
	$\runCCont,\runCDis~\termC,\termD$& continuous-time/ discrete-time running and terminal cost &\\
	$J({n},x;u),~V({n},x) $ &performance criterion and value function at time $n$&\\
	$	\mathcal{N}_\dom=\{0,1,...,N_\dom\}$ & set of indices for  states $\dom= R, P,\RState^1,\ldots,\RState^\ell$&\\
	$\mathcal{N}_{R} \times \mathcal{N}_{P} \times\mathcal{N}_{ Y^1} \times \ldots\times\mathcal{N}_{ Y^\ell}$& set of multi-indices for points of  discretized space $\widehat{X}$&\\
	$x_m=(r_i,p_j,y^1_{k_1},\ldots,y^\ell_{k_\ell})$&point of  discretized space $\widehat{X}$&\\	
	$\Ptrans^{\afix}_{{m_1},{m_2}}$& transition probability &\\ 
	$\bbox_{\dom_i}$&  neighborhood of $\dom_i,~i=0,\ldots,N_\dom$ with  $\dom=r,p, y^k$&\\
	$\mathds{I}_n$& $n$-dimensional identity matrix\\
	\textbf{Abbreviations} &\\	
	\phx & Pipe heat exchanger\\
	MDP & Markov decision process\\
	IES, GES  & Internal, Geothermal  storage\\
	ODE, PDE, SDE & Ordinary, partial, stochastic  differential equation\\
\end{longtable}

\section{Details on the Transition Operator}
\label{app:trans_op_details}
The  functions  $\Sigma_{R}(n),\Sigma_{F}(n),\Sigma_{P}(n,\afix),\rho(n,\afix)$ and   $\mathcal{H}$  appearing in the transition operator in \eqref{state_recursion2} of Proposition \ref{prop:state_recursion} are given by
\begin{align}
	\Sigma_{\dom}^2 (n)& =\frac{\sigma^2_{\dom,n}}{2 \beta_\dom}(1-\mathrm{e}^{-2\beta_\dom\Delta_N}),~\dom=R,F,\\
	\Sigma_{P}^2  (n,\afix)& =\frac{{ \zeta_P^2 }\sigma_{R,n}^2}{2 \beta_R (\beta_R-\gamma)^2(\beta_R+\gamma)}\biggl\{\gamma+ 4 \beta_R \mathrm{e}^{-(\beta_R+\gamma)\Delta_N}-(\beta_R+\gamma) \mathrm{e}^{-2\beta_R \Delta_N}\nonumber\\
	&~~-\beta_R\left(2+ \mathrm{e}^{-2\gamma \Delta_N}\right)+\frac{\beta_R^2}{\gamma}\left( 1-\mathrm{e}^{-2\gamma \Delta_N}\right)\biggr\},\\
	\Sigma_{RP}^2   (n,\afix)& =-\frac{{ \zeta_P}\sigma_{R,n}^2}{2 \beta_R(\beta_R^2-\gamma^2)}\left(\beta_R-\gamma-2\beta_R\mathrm{e}^{-(\beta_R+\gamma)\Delta_N}+(\beta_R+\gamma)\mathrm{e}^{-2\beta_R\Delta_N}\right),\\
	\rho(n,\afix)& =\frac{\Sigma_{RP}^2  (n,\afix)}{\Sigma_{R}{ (n)}\Sigma_{P}  (n,\afix)}.
\end{align}
The function  $\mathcal{H}:\{0,\ldots,N-1\}\times\R^{\ell+1}\times \feasible\to \R$ is  linear in the state components $r,y$ and  given  by 
\begin{align}
	\mathcal{H}( n,r,y,\afix)
	&=\begin{cases} { h(n,r)}+
		\big(\Pamb+\frac{{ \zeta_F } }{\gamma}\big)(1-\mathrm{e}^{-\gamma\Delta_N}), &\afix=\Fuel\\[1ex]
		{ h(n,r)}+\big(\Pamb+\frac{{ \zeta_C} (p_{in}-\Pout)}{\gamma}\big)(1-\mathrm{e}^{-\gamma \Delta_N}), &\afix=\Charg\\[1ex]	
		{ h(n,r)}+\Pamb(1-\mathrm{e}^{-\gamma \Delta_N}), &\afix=\phantom{+}\Wait\\[1ex]
		{ h(n,r)}+\big(\Pamb-\frac{{ \zeta_D } \QinC}{\gamma}\big)(1-\mathrm{e}^{-\gamma\Delta_N})+\mathrm{e}^{-\gamma \Delta_N}\psi(n,y),&\afix=\Discharg,\\
		\mathrm{e}^{-\gamma \Delta_N}(p+P_{amb}(\mathrm{e}^{\gamma\Delta_N}-1)) & \afix=\Spill,
	\end{cases}
	\label{var-Ups}\\
	\text{with  	}~~  h(n,r)&= 
	\frac{{ \zeta_P }  r}{\beta_R-\gamma}\Big( \mathrm{e}^{-\beta_R \Delta_N}-\mathrm{e}^{-\gamma\Delta_N}\Big)-\frac{{ \zeta_P } \mu_{R,n}}{\gamma}(1-\mathrm{e}^{-\gamma\Delta_N}) \\
	\text{and 	}~~ \psi(n,y)
	&={ \zeta_D } \OutputOut \Big\{(\mathrm{e}^{(\gamma \mathds{I}_\ell+\overline{A})\Delta_N}-\mathds{I}_\ell)(\gamma \mathds{I}_\ell+\overline{A})^{-1}\Rstate+\nonumber
	\\	&\hspace*{4em}
	\big[(\mathrm{e}^{(\gamma \mathds{I}_\ell+\overline{A})\Delta_N}-\mathds{I}_\ell)(\gamma \mathds{I}_\ell+\overline{A})^{-1}-\frac{1}{\gamma}(\mathrm{e}^{\gamma \Delta_N}-1)\mathds{I}_\ell\big]\overline{A}^{-1}{B}g(n,-1)\Big\}.
\end{align}

\section{Proof of Lemma \ref{lem:RunningCost-closed}}
\label{Cost-closed-f1}
\begin{proof}
	Let  $k=0,\ldots,N-1, x=(r,f,p,y^\top)^\top\in\Statespace, \afix\in \actionspace$.  Then
	\begin{align}
		\overline{\runCDis}(k,x,\afix) =  \mathrm{e}^{\delta k\Delta_N}\runCDis(k,x,\afix) &= \mathrm{e}^{\delta t_k}\mathbb{E}_{k,x}\bigg[\int_{t_k}^{t_{k+1}}  \mathrm{e}^{-\delta s}\runCCont(X^{\overline \uprocess}(s),\afix) \mathrm{d} s \bigg]
		=\int_{t_k}^{t_{k+1}}  \mathrm{e}^{-\delta   (s-t_k)} \mathbb{E}_{k,x}\bigg[\runCCont(X^{\overline \uprocess}(s),\afix)  \bigg]\mathrm{d} s. 
	\end{align}
	For $\afix=+2,~\mu_{F,k}=\mu_F(t_k)$, $\Delta_N=t_{k+1}-t_k$,  and using the fact that $M_t^F=\displaystyle\int_{t_k}^{{  t}} \mathrm{e}^{-\beta_F (s-\tau)}dW_F(\tau)$ is a martingale, we have
	\begin{align}
		{  \overline{\runCDis}(k,x,\afix)}&=\int_{t_k}^{t_{k+1}}  \mathrm{e}^{-\delta   (s-t_k)} \mathbb{E}_{k,x}\big[\runCCont(X^{\overline \uprocess}(s),\afix)  \big]\mathrm{d} s
		=\price_F\int_{t_k}^{t_{k+1}}\mathrm{e}^{-\delta   (s-t_k)} \mathbb{E}\big[\mu_F(s)+F(s)~\mid F(t_k)=f\big]ds\\
		&=\price_F\int_{t_k}^{t_{k+1}}\mathrm{e}^{-\delta   (s-t_k)} \mathbb{E}\bigg[\mu_F(s)+f \mathrm{e}^{-\beta_F(s-t_k)}+\int_{t_k}^{s} \sigma_F(\tau) \mathrm{e}^{-\beta_F (s-\tau)}dW_F(\tau)\bigg]ds\\
		&=\price_F\int_{t_k}^{t_{k+1}}\mathrm{e}^{-\delta   (s-t_k)}\bigg(\mu_F(s)+f \mathrm{e}^{-\beta_F(s-t_k)})\bigg)ds=\price_F \bigg[\frac{\mu_{F,k}}{\delta}(1-\mathrm{e}^{-\delta \Delta_N})+\frac{f}{\delta+\beta_F}(1-\mathrm{e}^{-(\delta+\beta_F) \Delta_N}) \bigg]
	\end{align}
	For $\afix=+1$, we have 
	\begin{align}
		\overline{\runCDis}(k,x,\afix)&=\int_{t_k}^{t_{k+1}}  \mathrm{e}^{-\delta   (s-t_k)} \mathbb{E}_{k,x}\big[\runCCont(X^{\overline \uprocess}(s),\afix)  \big]\mathrm{d} s
		=\int_{t_k}^{t_{k+1}}\mathrm{e}^{-\delta   (s-t_k)}\mathbb{E}\big[{ \price_{HP}}(\Pin-\OutputOut Y(s))+{ \price_{P}}~\mid Y(t_k)=y\big]ds
		\\ &
		=\int_{t_k}^{t_{k+1}}({ \price_{HP}}\Pin+{ \price_{P}})\mathrm{e}^{-\delta   (s-t_k)}ds
		-{ \price_{HP}}\OutputOut \bigg\{\int_{t_k}^{t_{k+1}}\mathrm{e}^{-\delta   (s-t_k)}\Big(\mathrm{e}^{A(s-t_k)}\RState(t_k)+(\mathrm{e}^{A(s-t_k)}-\mathds{I}_\ell)A^{-1}Bg(t_k,+1)\Big)ds\bigg\}\\
		&=\frac{{ (\price_{HP}}\Pin+{ \price_{P}})}{\delta}(1-\mathrm{e}^{-\delta \Delta_N})-
		{ \price_{HP}} \OutputOut  \Big\{ \big(\mathrm{e}^{(A-\delta \mathds{I}_\ell)\Delta_N}-\mathds{I}_\ell\big)(A-\delta \mathds{I}_\ell)^{-1}y\nonumber\\&
		\hspace*{0.32\textwidth}+\Big[\big(\mathrm{e}^{(A-\delta \mathds{I}_\ell)\Delta_N}-\mathds{I}_\ell\big)(A-\delta \mathds{I}_\ell)^{-1}-\frac{1}{\delta}(1-\mathrm{e}^{-\delta \Delta_N})\mathds{I}_\ell\Big]A^{-1}Bg(t_n,+1)\Big\}.
	\end{align}
	Finally, for $\afix=-1$, we obtain
	\begin{align}
		\overline{\runCDis}(k,x,\afix)&=\int_{t_k}^{t_{k+1}}  \mathrm{e}^{-\delta   (s-t_k)} \mathbb{E}_{k,x}\Big[\runCCont(X^{\overline \uprocess}(s),\afix)  \Big]\mathrm{d} s
		={ \price_{P}}\int_{t_k}^{t_{k+1}}\mathrm{e}^{-\delta   (s-t_k)}ds =\frac{{ \price_{P}}}{\delta}(1-\mathrm{e}^{-\delta \Delta_N}).
	\end{align}
	For $\afix=0,-2$ the claim follows from $\runCCont(x,\afix)=0$.
\end{proof}

\end{document}